\documentclass[12pt]{article}
\usepackage[top=1in,bottom=1in,left=1in,right=1in]{geometry}
\usepackage{indentfirst}
\usepackage{amsmath,amssymb,amsthm,mathtools,booktabs,array,longtable,microtype}
\usepackage{enumitem}
\usepackage{needspace,etoolbox}
\BeforeBeginEnvironment{theorem}{\Needspace{6\baselineskip}}
\BeforeBeginEnvironment{lemma}{\Needspace{6\baselineskip}}
\BeforeBeginEnvironment{proposition}{\Needspace{6\baselineskip}}
\BeforeBeginEnvironment{corollary}{\Needspace{6\baselineskip}}
\pretocmd{\subsubsection}{\Needspace{9\baselineskip}}{}{}
\usepackage{cite}
\usepackage[hidelinks]{hyperref}
\hypersetup{
  pdftitle={Critical Pairs for Mixed Restricted Sumsets in Prime Fields},
  pdfauthor={Weilin Zhang and Hongjian Li}
}

\newtheorem{theorem}{Theorem}[section]
\newtheorem{lemma}{Lemma}[section]
\newtheorem{proposition}{Proposition}[section]
\newtheorem{corollary}[lemma]{Corollary}

\theoremstyle{remark}

\newcommand{\F}{\mathbb{F}_p}
\newcommand{\rs}{\mathbin{\widehat+}}
\newcommand{\fall}[2]{(#1)_{\underline{#2}}}
\newcommand{\rise}[2]{(#1)_{\overline{#2}}}
\newcommand{\ind}{\mathbf 1}

\allowdisplaybreaks[2]
\usepackage{caption}

\begin{document}
\title{Critical Pairs for Mixed Restricted Sumsets\\ in Prime Fields}
\author{
Weilin Zhang$^{1}$\footnote{E-mail: weilin@gzhu.edu.cn.}\quad
Hongjian Li$^{2}$\footnote{Corresponding author. E-mail: lhj@gdufs.edu.cn. Supported by the Project of Guangdong University of Foreign Studies (Grant No. 2024RC063).}\\
{\small\itshape $^{1}$School of Mathematics and Information Science, Guangzhou University,}\\
{\small\itshape Guangzhou 510006, Guangdong, P. R. China}\\
{\small\itshape $^{2}$School of Mathematics and Statistics, Guangdong University of Foreign Studies,}\\
{\small\itshape Guangzhou 510006, Guangdong, P. R. China}
}

\date{}
\maketitle

\begin{abstract}
Let $p$ be an odd prime and let $A,B\subseteq\F$ be nonempty and distinct,
with $|A|\ge |B|$.  We give a complete classification of the pairs for
which the mixed restricted sumset
$A\rs B=\{x+y:x\in A,\ y\in B,\ x\ne y\}$ has size
$\min\{p,|A|+|B|-2\}$.  The main new contribution concerns the interior range $|A|+|B|\le p-1$.
For $|B|\ge3$ and $|A|-|B|\ge3$, criticality forces $A$ and $B$ to be common-endpoint arithmetic progressions, apart from a single common affine orbit with $(p,|A|,|B|)=(13,7,4)$.
For $|A|-|B|\in\{1,2\}$, criticality itself forces $B\subsetneq A$, after which
the restricted self-sum determines the deletion patterns. Together with the remaining cases, this yields the complete classification.
\end{abstract}

\medskip
\noindent {\bf Keywords: } Mixed restricted sumset; critical pair; inverse problem; arithmetic progression; polynomial method.
\medskip

\noindent {\bf 2020 Mathematics Subject Classification:} Primary 11P70; Secondary 11B30.

\section{Introduction}\label{intro}

Let $p$ be an odd prime, and write $\F$ for the field with $p$ elements.
We write $\F^*=\F\setminus\{0\}$ and
$S^c=\F\setminus S$ for $S\subseteq\F$.
A basic inverse problem in additive combinatorics is to determine the
structure of sets whose sumset attains a sharp lower bound.  For ordinary
addition, the Cauchy--Davenport theorem \cite{Cauchy,Davenport} gives
\[
 |A+B|\ge\min\{p,|A|+|B|-1\}
\]
for nonempty $A,B\subseteq\F$.  Vosper's theorem
\cite{Vosper1,Vosper2} describes equality away from the boundary: if
$|A|,|B|\ge2$ and $|A+B|=|A|+|B|-1\le p-2$, then $A$ and $B$ are
arithmetic progressions with a common difference.

Restricted addition excludes representations with equal summands.  For
nonempty $A,B\subseteq\F$, define
\[
 A\rs B=\{x+y:x\in A,\ y\in B,\ x\ne y\}.
\]
The condition $x\ne y$ concerns the field elements themselves; excluding
$x+x$ need not remove the sum $2x$ if it has another representation.
For $A=B$, the Erd\H{o}s--Heilbronn bound, proved by Dias da Silva and
Hamidoune \cite{DH}, is
\[
 |A\rs A|\ge\min\{p,2|A|-3\}.
\]
K\'arolyi \cite[Theorem 7]{Karolyi} proved the corresponding inverse theorem: when
$|A|\ge5$ and $2|A|-3<p$, equality at $2|A|-3$ holds precisely for
arithmetic progressions.

For mixed addition, Alon, Nathanson, and Ruzsa \cite{ANR95,ANR96}
proved the sharp bound for unequal cardinalities, and K\'arolyi
\cite{KarolyiExceptional} treated distinct sets of equal cardinality
in the nonsaturated range.  Together with the
elementary saturated case, these results give
\begin{equation}\label{anr}
 |A\rs B|\ge\min\{p,|A|+|B|-2\}\qquad(A\ne B).
\end{equation}
Alon, Nathanson, and Ruzsa also posed the inverse problem of describing
the pairs attaining this bound \cite[Section 5, item 4]{ANR96}.  The self-sum inverse theorem
does not answer this question, since the sets need not coincide and the
extremal cardinality is different.

Throughout this paper, \emph{mixed} means $A\ne B$, not necessarily
$|A|\ne|B|$.  Interchanging the sets if necessary, put
$a=|A|\ge b=|B|\ge1$.  We call $(A,B)$ \emph{critical} if
\begin{equation}\label{critical}
 |A\rs B|=\min\{p,a+b-2\}.
\end{equation}
Henceforth, a \emph{critical pair} always means a mixed pair satisfying \eqref{critical}.
Our aim is a complete classification of these pairs.  The natural
equivalence is the common affine transformation
$ (A,B)\mapsto(uA+v,uB+v)$, where $u\in\F^*$ and $v\in\F$, because
$(uA+v)\rs(uB+v)=u(A\rs B)+2v$.  Independent translations, however, need not preserve the excluded
diagonal.  We use
\emph{common-endpoint progressions} to mean a common affine image of
$([0,a-1],[0,b-1])$; reversing the common difference includes the
terminal-segment description.  A set $S\subseteq\F$ is
\emph{centrally symmetric} if $S=2h-S$ for some $h\in\F$.

\subsection{Prior results and the remaining problem}

The closest precursor to the present work is the paper of Liu and Qian
\cite{LQ}, which treats the inverse problem under a progression
hypothesis.  In particular, if $b\ge3$, $a\ge b+3$, and $a+b\le p$,
and at least one summand is an arithmetic progression, their
Theorem~1.5 forces a critical pair to consist of progressions with the
same difference and a common endpoint.  Their Conjecture~1.7 asks whether
the progression hypothesis can be removed in this range.
For $b\ge4$ and $a-b\in\{1,2\}$, their Proposition~3.2(i) determines
the deletion patterns when $A$ is an arithmetic progression, whereas
their Proposition~3.4 gives the same necessary forms under the weaker
hypothesis $B\subsetneq A$; both apply when
$|A\rs B|=a+b-2\le p-2$.
Thus removing the progression hypothesis in the large gap and deriving
containment in the two small gaps are genuinely different tasks.

Subsequent examples show that the progression-only conjecture requires
two corrections.  First, the boundary $a+b=p$ already contains infinite
families of non-progression critical pairs, constructed by Li and Liang
\cite[Theorems 2.1 and 2.3]{LL}.  Li, Li, and Yuan
\cite[Theorem 1.1]{HLY} subsequently classified this entire boundary
by explicit two-missing-sum models.  Every model has the smaller set
contained in the larger \cite[Proposition 3.2(i)]{HLY}.
Second, a non-progression obstruction remains strictly inside the boundary:
Li, Li, and Yuan \cite[Example 4.1]{HLY} exhibited a critical pair in
$\mathbb F_{13}$ with $(a,b)=(7,4)$ and $a+b=p-2$.  This pair also satisfies
the size assumptions of the progression-only assertion in
\cite[Theorem 3.1, version 2]{LL}, so that assertion does not hold as
stated.  The natural question below the boundary is therefore not whether
all large-gap pairs are progressions, but whether this $\mathbb F_{13}$
orbit is the only exception.

The equal-cardinality case is a separate ingredient needed for a global
classification.  Daza Urbano, Gonz\'alez-Mart\'inez, Huicochea Mason, and
Montejano Cantoral \cite[Theorem 7]{DGHM} showed that for
$|A|=|B|=k\ge5$ and $p>2k-1$, the equality
$|A\rs B|=2k-2$ forces $A=B$.  Thus distinct sets of these sizes cannot be
critical in the interior.  The remaining small cardinalities are treated in
Section~\ref{classification}, after the unequal-cardinality cases.

For $b\ge3$, the unresolved interior problem can now be stated sharply.
When $a-b\ge3$, one must determine whether the known $\mathbb F_{13}$
orbit is the unique failure of progression rigidity.  When
$a-b\in\{1,2\}$, one must prove rather than assume the containment
$B\subsetneq A$.  Theorem~\ref{main} resolves both questions and combines
them with the equal-cardinality, small-cardinality, boundary, and
saturated cases into a single inverse theorem.

\subsection{Complete classification}

The number of missing restricted sums provides a common parameter for
the different parts of the answer.  Define $m:=|\F\setminus(A\rs B)|$.
For a critical pair, $m=\max\{0,p-a-b+2\}$.  Thus $m=0$ is the saturated case, $m=1,2$ correspond to $a+b=p+1,p$,
and $m\ge3$ corresponds to the \emph{interior} $a+b\le p-1$.
These correspondences are used only for critical pairs.  Within the
interior, the \emph{large-gap range} means $b\ge3$ and $a-b\ge3$;
the gaps $a-b=1,2$ and the small values of $b$ require different
structural descriptions.

We first record the normal forms that appear in the statement.  Here and
below, $[r,s]$ denotes the residues of the consecutive integers
$r,r+1,\ldots,s$, always with fewer than $p$ terms; an empty index range
means the empty set.  Define
\begin{align}
\mathcal P_h(B):\quad &h\in B,\quad
 A=(\F\setminus(2h-B))\cup\{h\};\label{plusform}\\
 \mathcal T_{r,s}:\quad&A=\F\setminus K_{r,s},\quad B=B_{r,s},\label{twoform}\\
 \mathcal E:\quad&p=13,\quad A_* =\{0,1,3,4,5,6,10\},\quad
 B_* =\{0,1,4,6\};\label{exception}
\end{align}
where
\begin{align}
B_{r,s}&=\{-2i:0\le i<r\}\cup\{2j+1:0\le j<s\},\notag \\
K_{r,s}&=\{2i:1\le i\le r\}\cup\{-(2j+1):0\le j<s\}.\notag
\end{align}
In \eqref{plusform} we retain only $A\ne B$ and $a\ge b$.  In
\eqref{twoform}, $r,s\ge0$ and $r+s=b\le(p-1)/2$.  In
\eqref{exception}, $\mathcal E$ denotes the common affine orbit
represented by $(A_*,B_*)$.

\begin{theorem}\label{main}
Let $p$ be an odd prime and let $A,B\subseteq\F$ be nonempty distinct
sets, with $a=|A|\ge b=|B|$. Then $(A,B)$ is critical if and only if,
up to common affine equivalence, it is one of the pairs listed in
Table~\ref{total}.
\end{theorem}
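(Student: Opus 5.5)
The proof is a case split on the missing-sum parameter $m=\max\{0,p-a+b... \}$; more precisely on $m=\max\{0,p-a-b+2\}$, together with the size of $b$ and of the gap $a-b$. Sufficiency, that every entry of Table~\ref{total} is critical, is a direct computation for each normal form. For $\mathcal P_h(B)$ one checks that $A\rs B$ misses exactly $2h$ when $a+b=p+1$. For $\mathcal T_{r,s}$ one checks that the two missing sums are forced by the interlacing of $K_{r,s}$ and $B_{r,s}$. For common-endpoint progressions $[0,a-1],[0,b-1]$ the restricted sumset is $[1,a+b-3]$. For $\mathcal E$ it is a finite check in $\mathbb F_{13}$. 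Necessity is the real content, and I would organize it by $m$.

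\emph{Saturated and boundary cases ($m\le2$).} When $a+b\ge p+2$, every $z$ has at least two representations $x+y$ with $x\in A$, $y\in B$, since $|A\cap(z-B)|\ge a+b-p\ge2$. At most one of them is diagonal ($x=y=z/2$), so criticality holds automatically and the table entry is simply ``all pairs''. For $m=1$ ($a+b=p+1$), a missing sum $2h$ forces $A\cap(2h-B)=\{h\}$ with $h\in A\cap B$, which is exactly $\mathcal P_h(B)$. For $m=2$ I would invoke the boundary classification of Li, Li and Yuan \cite[Theorem 1.1]{HLY} and rewrite their two-missing-sum models in the normal form $\mathcal T_{r,s}$ after a common affine map.

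\emph{Interior ($m\ge3$).} The equal-size case $a=b\ge5$ is excluded by \cite[Theorem 7]{DGHM}. The cases $b\le2$ and $a=b\le4$ are a finite-shape analysis: for $b=1$, $A\rs B=(A+y)\setminus\{2y\}$, so criticality means $y\in A$; for $b=2$ the sums $A+y_1$, $A+y_2$ must overlap in a Vosper-type way. For $a-b\in\{1,2\}$ with $b\ge3$, I would first prove $B\subsetneq A$, and then quote Liu--Qian \cite[Proposition 3.4]{LQ} for the deletion patterns. To get the containment, I would consider $C=A\cup B$ and use a Combinatorial Nullstellensatz polynomial
\[
\prod_{c\notin A\rs B}(x+y-c)\,(x-y)
\]
restricted to $A\times B'$ for a suitable $B'\subseteq B$. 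If some $y_0\in B\setminus A$ exists, removing $y_0$ from the diagonal constraint gains a sum, and a Cauchy--Davenport count on $A+y_0$ together with the bound~\eqref{anr} applied to $(A,B\setminus\{y_0\})$ should produce $a+b-1$ sums, a contradiction.

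\emph{Large gap ($b\ge3$, $a-b\ge3$).} This is the main obstacle. My plan is to show that $B$ is an arithmetic progression and then apply \cite[Theorem 1.5]{LQ}, which yields common-endpoint progressions. Comparing with the self-sum first: since $A\rs B\supseteq (A\cap B)\rs(A\cap B)$ and $A+B\supseteq A\rs B$, Cauchy--Davenport gives $|A+B|\le a+b-1$ once we account for at most one lost diagonal value per element of $A\cap B$. I would aim to show that $|A+B|\le a+b-1$ (or that at most one sum is lost), and then use Vosper to conclude that $A$ and $B$ are progressions with a common difference. The loss count is bounded by the sums $2x$, $x\in A\cap B$, with a unique representation. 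When this count exceeds $1$, I would apply the polynomial method with the multiplier $(x-y)$ and a degree count. This should force $|A\cap B|$ to be small, or force $A$ near a progression (via Freiman $3k-4$ / Serra--Zémor type results in $\mathbb F_p$). The residual configurations reduce to a bounded computation that recovers the single orbit $\mathcal E$ at $(13,7,4)$. I expect the delicate part to be ruling out all non-progression configurations other than $\mathcal E$ uniformly in $p$. I would try to show that they force $p\le 13$ and then finish by computer check.
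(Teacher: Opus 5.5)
\textbf{Verdict: there are genuine gaps.} Your treatment of saturation, of $a+b=p+1$, of $a+b=p$ via \cite[Theorem 1.1]{HLY}, and of $a=b\ge5$ via \cite{DGHM} matches the paper's outline. So does reducing the small gaps to containment plus \cite[Proposition 3.4]{LQ}. The two steps that carry the new content, however, are not supported.

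\textbf{The large-gap interior.} Your starting inequality is false. Cauchy--Davenport bounds $|A+B|$ from below. Every sum of $A+B$ missing from $A\rs B$ has the form $2x$ with $x\in A\cap B$, so the diagonal losses only give $|A+B|\le a+b-2+|A\cap B|\le a+2b-2$. This bound is attained where it matters: for $(A_*,B_*)$ one has $A_*+B_*=\mathbb F_{13}$, with the four sums $2B_*$ lost. So no Vosper argument can start there. Moreover, $3k-4$-type theorems for two distinct sets stop below $|A+B|=a+2b-2$ even in $\mathbb Z$, and in $\F$ they also need size restrictions relative to $p$. Nothing in your sketch forces non-progression configurations into $p\le13$, so the ``bounded computation'' has no bound.

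The missing idea is a device that uses all missing sums at once. The paper does the following:
\begin{enumerate}[nosep]
\item It encodes the implication $x\in B$, $h\in H$, $x\ne h\Rightarrow 2h-x\in A^c$ as $(X-Y)F_{A^c}(2Y-X)\in(F_B(X),F_H(Y))$ (Lemma~\ref{gridlemma}).
\item It derives the recursion $\mathsf M_j\mathbf x_j=\mathbf v_j$ (Proposition~\ref{largegaprecursion}), with rank three for $4\le j\le b$ (Lemma~\ref{rank}).
\item It eliminates the non-reference low-order branches at levels four to six (Lemma~\ref{branchscreening}).
\end{enumerate}
This forces $F_B$ to be either the centered progression polynomial or $X^4+vX$ with $p=13$. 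Only then do \cite[Theorem 1.5(ii)]{LQ} or the reflection constraints recover $A$.

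\textbf{The containment argument.} This step also fails. The polynomial $(x-y)\prod_{c\notin A\rs B}(x+y-c)$ is nonzero at every off-diagonal point of $A\times B$, so it is not a Nullstellensatz witness. Deleting $y_0\in B\setminus A$ gives only $|A\rs B|\ge|(A\rs(B\setminus\{y_0\}))\cup(A+y_0)|\ge a+b-3$, and nothing forces $A+y_0$ to supply two new sums.

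The paper instead compresses to $(U,I)=(A\cup B,A\cap B)$. Since $U\rs I\subseteq A\rs B$, applying \eqref{anr} to $(U,I)$ gives $U\rs I=A\rs B$, hence $D_A+D_B\subseteq U\rs I$. Chains of missing sums of $(U,I)$ then push all of $U\setminus I$ into one of $D_A,D_B$. Identifying those missing sums uses Vosper for $|I|=1$, Proposition~\ref{smallcard} for $|I|=2$, and Proposition~\ref{lg} (including $\mathcal E$) for $|I|\ge3$. On this route, containment for $|I|\ge3$ rests on exactly the large-gap classification your plan leaves open.

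\textbf{Smaller points.}
\begin{itemize}[nosep]
\item \cite[Proposition 3.4]{LQ} requires $b\ge4$. The case $(a,b)=(4,3)$ gives the centrally symmetric family, not a progression deletion, and $(5,3)$ needs K\'arolyi's inverse theorem.
\item The cases $a=b\in\{3,4\}$ are not a finite check, since $p$ is arbitrary; the paper again compresses.
\item For $A=[0,a-1]$, $B=[0,b-1]$ one has $A\rs B=[1,a+b-2]$, not $[1,a+b-3]$.
\end{itemize}
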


\begingroup
\renewcommand{\arraystretch}{1.08}
\begin{longtable}{@{}>{\raggedright\arraybackslash}p{.20\textwidth}>{\raggedright\arraybackslash}p{.74\textwidth}@{}}
\caption{Complete classification of critical pairs.}
\label{total}\\
\toprule
\textnormal{Parameter range}&\textnormal{Normal form of $(A,B)$}\\
\midrule
\endfirsthead
\toprule
\textnormal{Parameter range}&\textnormal{Normal form of $(A,B)$}\\
\midrule
\endhead
$a+b\ge p+2$&$(A,B)$ arbitrary, with $A\ne B$ and $a\ge b$.\\[3pt]
$a+b=p+1$&$A=(\F\setminus(2h-B))\cup\{h\}$, where $h\in B$.\\[3pt]
$a+b=p$&$A=\F\setminus K_{r,s}$, $B=B_{r,s}$, where $r,s\ge0$ and $r+s=b$.\\
\midrule
\multicolumn{2}{@{}l}{\emph{Interior range $a+b\le p-1$:}}\\[2pt]
$a=b$&No critical pair.\\[3pt]
$b=1$, $a>b$&$B=\{v\}$ with $v\in A$.\\[4pt]
$b=2$, $a>b$&$B=\{0,1\}$, and either\newline
$A=[0,a-1]$, $A=[2-a,1]$, or\newline
$A=\{0,1\}\cup[\ell,\ell+a-3]$, where $3\le\ell$ and $\ell+a-3\le p-2$.\\[4pt]
$b\ge3$, $a-b=1$&$b\ge4$: $A=[0,b]$, $B=A\setminus\{r\}$, $0\le r\le b$;\newline
$b=3$: $A=2h-A$, $B=A\setminus\{v\}$, $h\in\F$, $v\in A$.\\[4pt]
$b\ge3$, $a-b=2$&$A=[0,b+1]$, $B=A\setminus T$, where\newline
$T\in\bigl\{\{0,1\},\{0,2\},\{b-1,b+1\},\{b,b+1\}\bigr\}$.\\[4pt]
$b\ge3$, $a-b\ge3$&$A=[0,a-1]$, $B=[0,b-1]$;\newline
or, when $(p,a,b)=(13,7,4)$, $A=A_*$ and $B=B_*$.\\
\bottomrule
\end{longtable}
\endgroup

The principal new content of Theorem~\ref{main} lies in the interior.
For $b\ge3$ and $a-b\ge3$, no progression or containment hypothesis is
assumed: the common-endpoint progression family and the single orbit
$\mathcal E$ exhaust the possibilities.  The existence of $\mathcal E$
is due to \cite[Example 4.1]{HLY}; the assertion here is its uniqueness
throughout the large-gap interior.  For $a-b\in\{1,2\}$, containment
$B\subsetneq A$ is derived from criticality, rather than imposed.
For $b\ge4$, the necessary deletion patterns then follow from
\cite[Proposition 3.4]{LQ}; their sufficiency is checked directly below.
We also retain the complete four-element centrally symmetric family
when $(a,b)=(4,3)$.  Liu and Qian \cite[Remark 3.5]{LQ} already gave
a non-progression example at this endpoint.

The boundary row $a+b=p$ is the complete classification of
\cite[Theorem 1.1]{HLY}, rewritten in our normalization.  Likewise, once
containment has been derived, the small-gap deletion patterns for $b\ge4$
recover those of Liu and Qian \cite[Propositions 3.2(i) and 3.4]{LQ}.
The parameter ranges in Table~\ref{total} are disjoint, although
different normal forms within a row may represent the same affine orbit.

\subsection{Proof strategy}

The starting point is the information carried by missing restricted sums.
For a critical pair with $m>0$, write $A^c=\F\setminus A$ and let $H$
consist of the halves of the missing sums.  Then
\[
 x\in B,\ h\in H,\ x\ne h\quad\Longrightarrow\quad2h-x\in A^c.
\]
Subsection~\ref{reflectionclosure} records this pointwise reflection
condition and encodes it in a single identity involving the
root polynomials of $B$, $A^c$, and $H$.  Together with the cardinality
relation, this identity also yields a critical pair.

The coefficient analysis is carried out in the large-gap interior.
Its use of polynomial identities and low-order coefficients is related
to K\'arolyi's approach to the restricted self-sum inverse problem
\cite[Section 2]{Karolyi}.  Here the reflection identity involves three
root polynomials and leads to a mixed coefficient system.
Subsection~\ref{recurrence} derives the levelwise matrix recursion
\[
 \mathsf M_j\mathbf x_j=\mathbf v_j,
 \qquad
 \mathbf v_j=\mathsf Q_j\mathbf h_j+(-1)^{j+1}\mathsf R_j\mathbf c_j,
\]
solves the first three levels, and proves rank three for
$4\le j\le b$.  The first three levels divide the initial data into
three branches; unique continuation and support propagation then
control their later states.  Subsection~\ref{eliminationdetails} tests
these branches by matrix compatibility, together with the actual-degree
conditions and distinctness of the roots.  In the range $4\le j\le b$,
the linear compatibility test is
$\mathbf v_j\in\operatorname{im}\mathsf M_j$; beyond degree $b$, the
coefficient comparisons are obtained directly from the two general
matrix lemmas.  The fourth, fifth, and sixth levels eliminate every
non-reference branch except the quartic exception in characteristic
$13$.  Proposition~\ref{coefficientrigidity} then uses
unique continuation to identify all coefficients through degree $b$ in
the reference branch.  Appendix~\ref{app:finitechecks} records the
detailed finite compatibility calculations omitted from the main text.

Section~\ref{classification} first recovers the large-gap pairs from
Proposition~\ref{coefficientrigidity}.  The inverse theorem of Liu and
Qian \cite[Theorem 1.5(ii)]{LQ} determines $A$ when $B$ is an arithmetic progression;
in the exceptional case, the reflection constraints determine $A$ directly.  Subsection~\ref{smallgapinterior} settles $b=1,2$
and then treats the two small gaps.  Its key step is containment:
when $A\cap B\ne\varnothing$, criticality ensures that compression
to $(A\cup B,A\cap B)$ preserves the restricted sumset,
and missing-sum reflections rule out a nontrivial partition of the
compressed difference set.  Thus criticality forces $B\subsetneq A$;
the known deletion theorem and the two low-cardinality endpoints finish
the small-gap classification.  Subsection~\ref{remainingcases} treats
equal cardinalities, the two boundary layers, and saturation, citing
\cite{DGHM,HLY} where they apply and supplying the remaining short proofs.

\section{Structural and coefficient framework}\label{structure}

This section develops the coefficient argument for the large-gap interior.
Subsection~\ref{reflectionclosure} starts from the elementary reflection
implication forced by each missing restricted sum, encodes that implication
by a polynomial identity, and records the converse reconstruction criterion.
In Subsection~\ref{recurrence} we impose the large-gap assumptions,
derive the explicit matrix recursion
$\mathsf M_j\mathbf x_j=\mathbf v_j$, solve the first three levels, and
prove the stable rank, unique-continuation, and support-propagation
properties from level four onward.  These calculations isolate three
low-order branches.  Subsection~\ref{eliminationdetails} tests them by
the compatibility condition $\mathbf v_j\in\operatorname{im}\mathsf M_j$;
only levels four, five, and six, together with the actual-degree endpoint
equations, are needed.  The resulting coefficient-rigidity proposition
identifies the target polynomial $F_B$, so that
Subsection~\ref{largegapinterior} can recover the underlying sets without
repeating the coefficient analysis.

Throughout Sections~\ref{structure}--\ref{classification} and
Appendix~\ref{app:finitechecks}, integer parameters in field expressions
are identified with their images in $\F$; inequalities involving these
parameters refer to their integer values.  Column vectors are written in
bold, and matrix names in sans serif.  For $k\ge0$ we write
\[
 \fall{x}{k}=x(x-1)\cdots(x-k+1),\qquad
 \rise{x}{k}=x(x+1)\cdots(x+k-1),
\]
with both products equal to $1$ when $k=0$.  The symbol $\ind_E$ denotes
the indicator of a condition $E$.

\subsection{Reflection encoding and matrix tools}\label{reflectionclosure}

For $h\in\F$, the map $x\mapsto2h-x$ is the \emph{reflection about $h$}.
Since $p$ is odd, its unique fixed point is $h$, called the
\emph{reflection center}.
Let $(A,B)$ be a critical pair with $a\ge b$ and $m>0$, put
$A^c:=\F\setminus A$, and define $H:=(\F\setminus(A\rs B))/2$.
Thus $|H|=m$ and $h\in H$ precisely when $2h\notin A\rs B$.
For $x\in B$ and $h\in H$, one has $x\ne h\Rightarrow2h-x\in A^c$;
otherwise $2h=(2h-x)+x$ would be an admissible restricted representation.
The exception $x=h$ corresponds exactly to the excluded diagonal pair.
This pointwise reflection implication is the only set-theoretic input
needed for the polynomial encoding.  We therefore isolate it from the
critical-pair setting and encode it for arbitrary sets.  In particular,
the next lemma requires neither criticality nor the cardinality relation
among $A$, $B$, and $H$.

For nonempty $A,B\subseteq\F$ and any $H\subseteq\F$, define
\begin{equation}\label{rootpolys-set}
 F_B(X):=\prod_{x\in B}(X-x),\qquad
 F_{A^c}(Z):=\prod_{y\in A^c}(Z-y),\qquad
 F_H(Y):=\prod_{h\in H}(Y-h),
\end{equation}
with an empty product interpreted as $1$, and put
\begin{equation}\label{reflectionpoly}
 P(X,Y):=(X-Y)F_{A^c}(2Y-X).
\end{equation}
The factor $X-Y$ encodes the allowed diagonal exception, so the
reflection condition is equivalent to the vanishing of $P$ on $B\times H$.
For $H\ne\varnothing$, the ideal-membership statement below is the
two-variable Cartesian-grid case of Alon's Combinatorial Nullstellensatz
\cite[Theorem 1.1]{AlonCN}.  We include the short division proof; if
$H=\varnothing$, the identity is immediate from $F_H=1$.

\begin{lemma}\label{gridlemma}
Let $A,B\subseteq\F$ be nonempty and $H\subseteq\F$, with root
polynomials as in \eqref{rootpolys-set}.  The condition
\[
 2h-x\in A^c\qquad(x\in B,\ h\in H,\ x\ne h)
\]
holds if and only if there exist $U,V\in\F[X,Y]$ such that
\begin{equation}\label{witnessidentity}
 P(X,Y)=F_B(X)U(X,Y)+F_H(Y)V(X,Y).
\end{equation}
\end{lemma}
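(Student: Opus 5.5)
First, the condition is equivalent to the vanishing of $P$ on the grid $B\times H$. Indeed, for $x\in B$ and $h\in H$ we have $P(x,h)=(x-h)F_{A^c}(2h-x)$. If $x=h$ the first factor vanishes. If $x\ne h$, then $P(x,h)=0$ holds exactly when $F_{A^c}(2h-x)=0$, that is, when $2h-x\in A^c$. So the lemma amounts to showing that $P$ vanishes on $B\times H$ if and only if $P$ lies in the ideal generated by $F_B(X)$ and $F_H(Y)$. The case $H=\varnothing$ is trivial: the condition is vacuous, and since $F_H=1$ we may take $U=0$, $V=P$. From now on assume $H\ne\varnothing$.

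The "if" direction is immediate. Evaluating \eqref{witnessidentity} at a point $(x,h)\in B\times H$ gives $P(x,h)=F_B(x)U(x,h)+F_H(h)V(x,h)=0$, because $F_B(x)=0$ and $F_H(h)=0$.

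For the "only if" direction I would use division. Since $F_B$ is monic in $X$ of degree $b$, dividing $P$ by it as a polynomial in $X$ over $\F[Y]$ gives
\[
P=F_B(X)U_1+R_1,\qquad \deg_X R_1<b.
\]
Since $F_H$ is monic in $Y$ of degree $|H|$, dividing $R_1$ by it as a polynomial in $Y$ over $\F[X]$ gives
\[
R_1=F_H(Y)V+R,\qquad \deg_Y R<|H|.
\]
This second division does not raise the $X$-degree: the divisor $F_H$ involves only $Y$, so the quotient and remainder have $X$-degree at most $\deg_X R_1<b$. Hence $\deg_X R<b$ and $\deg_Y R<|H|$. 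By construction, $R$ agrees with $P$ on $B\times H$, so $R$ vanishes on that grid.

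The remaining step, which is the only substantive one, is to show that $R=0$. Write
\[
R=\sum_{j<|H|}c_j(X)Y^j,\qquad \deg c_j<b.
\]
Fix $x\in B$. Then $R(x,Y)$ is a polynomial in $Y$ of degree less than $|H|$ that vanishes at all $|H|$ points of $H$, so it is the zero polynomial. Thus $c_j(x)=0$ for every $j$ and every $x\in B$. Each $c_j$ then has degree less than $b$ but vanishes at all $b$ points of $B$, so $c_j=0$. Therefore $R=0$, and \eqref{witnessidentity} holds with $U=U_1$ and the $V$ found above. The one detail needing care is the degree bound in $X$ after the second division, which holds because $F_H$ depends on $Y$ alone.
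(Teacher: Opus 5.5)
Your proof is correct and follows essentially the same route as the paper: you divide $P$ by the monic $F_B$ in $X$, then use that a polynomial of degree less than $|B|$ (resp.\ $|H|$) vanishing on $B$ (resp.\ $H$) is zero. The only cosmetic difference is that you divide a second time by $F_H$ and show the bivariate remainder vanishes, whereas the paper observes directly that each $X$-coefficient $g_i(Y)$ of the first remainder vanishes on $H$ and is therefore divisible by $F_H$.
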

\begin{proof}
Assume the reflection condition.  The factor $X-Y$ gives vanishing when $x=h$; in all other cases,
vanishing follows from $F_{A^c}(2h-x)=0$.  Hence $P=0$ on
$B\times H$.  Divide by the monic $F_B$ in $\F[Y][X]$ to write
$P=F_BQ+G$ with $\deg_XG<b$.  For every $h\in H$, the polynomial
$G(X,h)$ vanishes at all $b$ elements of $B$, so it is identically zero.
Writing $G=\sum_{i=0}^{b-1}g_i(Y)X^i$, we deduce that each $g_i$
vanishes on $H$ and is divisible by $F_H$.  Thus $G=F_HV$, giving
\eqref{witnessidentity} with $U=Q$.
Conversely, evaluating that identity at $(x,h)\in B\times H$ gives
$(x-h)F_{A^c}(2h-x)=0$.  If $x\ne h$, its first factor is nonzero,
so $2h-x\in A^c$.
\end{proof}

The identity becomes a characterization of critical pairs once the
cardinality relation is imposed.  This converse will allow us to apply
set-theoretic conclusions to split polynomial solutions.

\begin{lemma}\label{reconstruction}
Let $A,B,H\subseteq\F$ be nonempty, with $A\ne B$, and suppose
\[
 |A^c|=|B|+|H|-2.
\]
If the root polynomials satisfy \eqref{witnessidentity}, then
\[
 |A\rs B|=|A|+|B|-2,\qquad 2H=\F\setminus(A\rs B).
\]
\end{lemma}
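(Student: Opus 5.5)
By Lemma~\ref{gridlemma}, the identity \eqref{witnessidentity} is equivalent to the pointwise reflection condition
\[
 x\in B,\ h\in H,\ x\ne h\ \Longrightarrow\ 2h-x\in A^c .
\]
So we may use this set-theoretic statement throughout, and the proof becomes a counting argument combined with the Alon--Nathanson--Ruzsa bound \eqref{anr}.

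\emph{Step 1: $2H\cap(A\rs B)=\varnothing$.} Take $h\in H$ and suppose $2h=y+x$ with $y\in A$, $x\in B$, $x\ne y$. Then $x\ne h$, since otherwise $y=2h-x=h=x$. The reflection condition gives $y=2h-x\in A^c$, a contradiction. Hence $2H\subseteq\F\setminus(A\rs B)$. Because $p$ is odd, doubling is a bijection, so $|2H|=|H|$. Therefore
\[
 |A\rs B|\le p-|H| = p-(|A^c|-|B|+2) = p-(p-|A|)+|B|-2 = |A|+|B|-2 .
\]

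\emph{Step 2: the matching lower bound.} Since $A\ne B$ are nonempty, \eqref{anr} gives $|A\rs B|\ge\min\{p,|A|+|B|-2\}$. Note that \eqref{anr} is stated for the larger set first, but $A\rs B=B\rs A$, so the order does not matter. If $|A|+|B|-2\le p$, we get $|A\rs B|\ge |A|+|B|-2$. The case $|A|+|B|-2>p$ cannot occur. Indeed, $|H|\ge1$ gives $|A^c|\ge|B|-1$, so $|A|+|B|\le p+1$ and $|A|+|B|-2\le p-1<p$. Combining with Step~1 yields $|A\rs B|=|A|+|B|-2$.

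\emph{Step 3: identify the missing sums.} Equality in Step~1 forces $|\F\setminus(A\rs B)|=p-|A|-|B|+2=|H|=|2H|$. Since $2H$ is contained in the complement of $A\rs B$ and has the same size, $2H=\F\setminus(A\rs B)$.

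The only delicate points are checking the case $x=h$ in Step~1 and confirming that \eqref{anr} applies in the correct (nonsaturated) regime. Both are handled above, so no step should present a real obstacle.
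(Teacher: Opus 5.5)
Your proposal is correct and follows essentially the same route as the paper: Lemma~\ref{gridlemma} gives the reflection condition, hence $2H\subseteq\F\setminus(A\rs B)$, and the cardinality relation then gives $|A\rs B|\le |A|+|B|-2<p$. The bound \eqref{anr} supplies the reverse inequality, and equal sizes identify $2H$ with the complement; your explicit checks of the case $x=h$, of the symmetry $A\rs B=B\rs A$, and of the nonsaturated regime via $|H|\ge1$ spell out points the paper leaves implicit.
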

\begin{proof}
Lemma~\ref{gridlemma} gives the reflection condition.  If
$2h=a_0+x$ were an admissible representation with $h\in H$, then
$a_0\ne x$ would imply $x\ne h$, and hence
$a_0=2h-x\in A^c$, a contradiction.  Therefore
$2H\subseteq\F\setminus(A\rs B)$, and the cardinality relation gives
$|A\rs B|\le p-|H|=|A|+|B|-2<p$.  Since $A\ne B$, \eqref{anr} supplies the reverse inequality.
Equality follows, and the two sets $2H$ and $\F\setminus(A\rs B)$
have the same size because multiplication by $2$ is bijective.
\end{proof}

The preceding two lemmas complete the passage between the reflection
constraints and the polynomial witness identity.  Before turning to its
coefficient structure, we record two elementary matrix facts that will be
used repeatedly: one for reduction modulo a monic polynomial, and one for
monic polynomial division.

\smallskip
Let $K$ be a field, let $b\ge1$ be an integer, and let
$F(X)=X^b+a_1X^{b-1}+\cdots+a_b\in K[X]$ be monic.  For $G\in K[X]$, write $\operatorname{rem}_{F}(G)$ for the
unique remainder when $G$ is divided by $F$; thus
\[
 G=FQ+\operatorname{rem}_{F}(G),\qquad
 \deg \operatorname{rem}_{F}(G)<b
\]
for a unique $Q\in K[X]$, with $\deg 0=-\infty$.  We also use
$[X^k]G$ to denote the
coefficient of $X^k$ in $G$.

With respect to the ordered basis $(1,X,\ldots,X^{b-1})$ of
$K[X]/(F)$, let $\mathbf e_k\in K^b$ be the coordinate vector of $X^k$
for $0\le k<b$, and set
\begin{equation*}
 \mathsf C_F:=
 \begin{pmatrix}
  0&0&\cdots&0&-a_b\\
  1&0&\cdots&0&-a_{b-1}\\
  0&1&\ddots&\vdots&-a_{b-2}\\
  \vdots&\ddots&\ddots&0&\vdots\\
  0&\cdots&0&1&-a_1
 \end{pmatrix}.
\end{equation*}
For $b=1$, this means $\mathsf C_F=(-a_1)$.  Indeed,
\[
 \mathsf C_F\mathbf e_k=\mathbf e_{k+1}\quad(0\le k<b-1),
 \qquad
 \mathsf C_F\mathbf e_{b-1}
 =-a_b\mathbf e_0-a_{b-1}\mathbf e_1-\cdots-a_1\mathbf e_{b-1},
\]
because $X^b\equiv-a_1X^{b-1}-\cdots-a_b\pmod F$.
Thus $\mathsf C_F$ is precisely the matrix of multiplication by $X$
in $K[X]/(F)$.  In particular, powers of $\mathsf C_F$ encode repeated
multiplication by $X$ followed by reduction modulo $F$.  The following
lemma records this observation in coefficient form.

\begin{lemma}\label{companionremainder}
With the notation above, for $0\le i,k<b$ and $\ell\ge0$,
\begin{equation}\label{general_remainder_matrix}
 [X^k]\operatorname{rem}_{F}(X^{i+\ell})
 =\mathbf e_k^{\mathsf T}\mathsf C_F^\ell \mathbf e_i.
\end{equation}
\end{lemma}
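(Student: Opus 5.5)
The plan is to read $\mathsf C_F$ as the matrix of multiplication by $X$ on $K[X]/(F)$ and to prove the identity by induction on $\ell$, with $i$ and $k$ fixed. Let $\phi\colon K[X]/(F)\to K^b$ be the coordinate map for the basis $(1,X,\ldots,X^{b-1})$. For any $G\in K[X]$, the vector $\phi(G\bmod F)$ is exactly the coefficient vector of $\operatorname{rem}_F(G)$. This holds because $\operatorname{rem}_F(G)$ has degree less than $b$ and is congruent to $G$, and such a representative is unique. So $[X^k]\operatorname{rem}_F(G)=\mathbf e_k^{\mathsf T}\phi(G\bmod F)$. It therefore suffices to show
\[
\phi\bigl(X^{i+\ell}\bmod F\bigr)=\mathsf C_F^\ell\mathbf e_i\qquad(\ell\ge0).
\]

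For the base case $\ell=0$, note that $i<b$, so $X^i$ is its own remainder and its coordinate vector is $\mathbf e_i$. For the inductive step, observe that multiplication by $X$ is a $K$-linear endomorphism of $K[X]/(F)$. The two column relations displayed just before the lemma state precisely that, in coordinates, this map sends each basis vector $\mathbf e_k$ to $\mathsf C_F\mathbf e_k$. The first relation covers $k<b-1$. The second covers $k=b-1$ and uses $X^b\equiv-a_1X^{b-1}-\cdots-a_b\pmod F$. By linearity, $\phi(XG\bmod F)=\mathsf C_F\,\phi(G\bmod F)$ for every $G$. Taking $G=X^{i+\ell}$ and applying the inductive hypothesis gives $\phi(X^{i+\ell+1}\bmod F)=\mathsf C_F^{\ell+1}\mathbf e_i$, which closes the induction. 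Pairing with $\mathbf e_k^{\mathsf T}$ then yields \eqref{general_remainder_matrix}.

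No step here is a real obstacle. The only points needing care are two bookkeeping facts. First, the remainder map and the coordinate map on the quotient agree, which follows from the uniqueness of division by the monic $F$. Second, the case $b=1$ is consistent: $\mathsf C_F=(-a_1)$, and $X\equiv-a_1$ gives $\operatorname{rem}_F(X^{\ell})=(-a_1)^{\ell}$ with $i=k=0$.
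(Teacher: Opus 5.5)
Your proposal is correct and follows the paper's proof: both identify $\mathsf C_F$ as the matrix of multiplication by $X$ on $K[X]/(F)$ and read off the $k$-th coordinate of the class of $X^{i+\ell}$ as the coefficient of $X^k$ in its remainder. You additionally spell out the induction on $\ell$ and the uniqueness of the degree-$<b$ representative, which the paper leaves implicit.
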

\begin{proof}
The coordinate vector of $X^i$ in $K[X]/(F)$ is $\mathbf e_i$.  Since
multiplication by $X$ is represented by $\mathsf C_F$, the coordinate
vector of the residue class of $X^{i+\ell}$ is
$\mathsf C_F^\ell \mathbf e_i$.  The coordinate indexed by $k$ is therefore the coefficient of $X^k$ in $\operatorname{rem}_{F}(X^{i+\ell})$, which is
exactly \eqref{general_remainder_matrix}.
\end{proof}

\begin{lemma}\label{toeplitzdivision}
Let $K$ be a field, let $m\ge1$ and $d\ge0$ be integers, and let
\[
 F(X)=X^m+f_1X^{m-1}+\cdots+f_m,
 \qquad
 G(X)=X^{m+d}+g_1X^{m+d-1}+\cdots+g_{m+d}
\]
be monic polynomials in $K[X]$.  Then
\begin{equation}\label{general_divisibility_criterion}
 F\mid G
 \quad\Longleftrightarrow\quad
 \mathbf g^{\mathrm{tail}}
 =\mathsf B_{F,d}\mathsf H_{F,d}^{-1}\mathbf g^{\mathrm{top}},
\end{equation}
where
\[
 \mathbf g^{\mathrm{top}}=(1,g_1,\ldots,g_d)^{\mathsf T},
 \qquad
 \mathbf g^{\mathrm{tail}}=(g_{d+1},\ldots,g_{d+m})^{\mathsf T},
\]
and
\[
 \mathsf H_{F,d}:=
 \begin{pmatrix}
  1&0&0&\cdots&0\\
  f_1&1&0&\ddots&\vdots\\
  f_2&f_1&1&\ddots&0\\
  \vdots&\vdots&\ddots&\ddots&\vdots\\
  f_d&f_{d-1}&\cdots&f_1&1
 \end{pmatrix},
 \qquad
 \mathsf B_{F,d}:=
 \begin{pmatrix}
  f_{d+1}&f_d&\cdots&f_1\\
  f_{d+2}&f_{d+1}&\cdots&f_2\\
  \vdots&\vdots&\ddots&\vdots\\
  f_{d+m}&f_{d+m-1}&\cdots&f_m
 \end{pmatrix},
\]
with $f_0=1$ and $f_t=0$ for $t<0$ or $t>m$.
When $d=0$, these definitions give
$\mathsf H_{F,0}=(1)$, $\mathsf B_{F,0}=(f_1,\ldots,f_m)^{\mathsf T}$,
and $\mathbf g^{\mathrm{top}}=(1)$.
\end{lemma}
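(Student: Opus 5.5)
The plan is to write the division $G=FQ+R$ explicitly and compare coefficients. Since $F$ and $G$ are monic of degrees $m$ and $m+d$, the quotient $Q$ is monic of degree $d$:
\[
Q(X)=X^d+q_1X^{d-1}+\cdots+q_d .
\]
The remainder satisfies $\deg R<m$. Put $q_0=1$ and $q_t=0$ for $t<0$ or $t>d$.

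The coefficient of $X^{m+d-k}$ in $FQ$ is the convolution
\[
c_k=\sum_{t}f_{k-t}q_t .
\]
For $0\le k\le d$, the exponent $m+d-k\ge m$ exceeds $\deg R$, so $g_k=c_k$ with $g_0=1$. These $d+1$ equations read $\mathsf H_{F,d}\mathbf q=\mathbf g^{\mathrm{top}}$, where $\mathbf q=(1,q_1,\ldots,q_d)^{\mathsf T}$. The matrix $\mathsf H_{F,d}$ is lower unitriangular Toeplitz in $f_0=1,f_1,\ldots,f_d$, hence invertible. So $\mathbf q=\mathsf H_{F,d}^{-1}\mathbf g^{\mathrm{top}}$, which recovers the usual recursive determination of the quotient. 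The first row of this system gives $q_0=1$, consistent with $g_0=1$.

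For the remaining indices $d+1\le k\le d+m$, the coefficient of $X^{m+d-k}$ in $G$ is
\[
g_k=c_k+[X^{m+d-k}]R .
\]
Here $c_k=\sum_{t=0}^{d}f_{k-t}q_t$, and row $k-d$ of $\mathsf B_{F,d}$ is $(f_k,f_{k-1},\ldots,f_{k-d})$. This matches the indexing $f_{d+i},\ldots,f_i$ in row $i$, with the convention $f_t=0$ for $t>m$ handling the entries beyond $f_m$. So the vector $(c_{d+1},\ldots,c_{d+m})^{\mathsf T}$ equals $\mathsf B_{F,d}\mathbf q$. As $k$ runs from $d+1$ to $d+m$, the exponents $m+d-k$ run through $m-1,\ldots,0$, which are exactly all coefficients of $R$. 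Therefore
\[
\mathbf g^{\mathrm{tail}}-\mathsf B_{F,d}\mathsf H_{F,d}^{-1}\mathbf g^{\mathrm{top}}
\]
is the coefficient vector of $R$, read from the top coefficient down.

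Since $F\mid G$ exactly when $R=0$, the equivalence follows. For $d=0$, $Q=1$ and $R=G-F$; the formula reduces to $g_i=f_i$ for $1\le i\le m$, consistent with the stated conventions.

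The only point needing care is the index bookkeeping. One must check that the Toeplitz rows of $\mathsf B_{F,d}$ align with the convolution and that the zero-padding conventions for $f_t$ are correctly placed; I expect no real obstacle beyond that.
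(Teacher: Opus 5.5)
Your proof is correct and follows essentially the same route as the paper: both compare coefficients of $G$ and $FQ$ through the stacked system $\mathsf H_{F,d}\mathbf q=\mathbf g^{\mathrm{top}}$, $\mathsf B_{F,d}\mathbf q=\mathbf g^{\mathrm{tail}}$, and use the invertible unitriangular top block to pin down the quotient. The only difference is that you work through division with remainder. This handles both directions at once and shows that $\mathbf g^{\mathrm{tail}}-\mathsf B_{F,d}\mathsf H_{F,d}^{-1}\mathbf g^{\mathrm{top}}$ is exactly the coefficient vector of the remainder; the paper instead proves the converse by defining $\mathbf q=\mathsf H_{F,d}^{-1}\mathbf g^{\mathrm{top}}$ and checking $G=FQ$ directly.
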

\begin{proof}
Let
\[
 Q(X)=X^d+q_1X^{d-1}+\cdots+q_d,
 \qquad
 \mathbf q=(1,q_1,\ldots,q_d)^{\mathsf T}.
\]
The coefficient identity $G=FQ$ is equivalent to
\[
 \begin{pmatrix}
  \mathbf g^{\mathrm{top}}\\[1mm]
  \mathbf g^{\mathrm{tail}}
 \end{pmatrix}
 =
 \begin{pmatrix}
  \mathsf H_{F,d}\\[1mm]
  \mathsf B_{F,d}
 \end{pmatrix}
 \mathbf q.
\]
The top block gives
$\mathbf q=\mathsf H_{F,d}^{-1}\mathbf g^{\mathrm{top}}$.
Substituting this into the bottom block gives
\eqref{general_divisibility_criterion}.  Conversely, if
\eqref{general_divisibility_criterion} holds, define
$\mathbf q=\mathsf H_{F,d}^{-1}\mathbf g^{\mathrm{top}}$; then both
blocks of the coefficient identity hold, so $G=FQ$.
\end{proof}

We now restrict the coefficient analysis to the large-gap interior.
The witness identity involves all three root polynomials; its coefficient
recursion follows by applying Lemmas~\ref{companionremainder} and
\ref{toeplitzdivision} to reduction modulo $F_B$ and division by $F_H$,
respectively.

\subsection{Coefficient recursion and unique continuation}
\label{recurrence}

We now apply the two matrix lemmas above to the witness identity of
Lemma~\ref{gridlemma}.  At each level, we regard the preceding coefficients as given.
After introducing the notation, we derive the recursion in
Proposition~\ref{largegaprecursion} by a coefficient calculation for $P$
and one application of each general matrix lemma.  We subsequently resolve
the three initial levels and prove the stable rank and continuation
properties from level four onward.  Subsection~\ref{eliminationdetails}
eliminates the incompatible low-order branches.

Throughout these two subsections, we work in the large-gap interior
$b\ge3$, $a\ge b+3$, and $a+b\le p-1$.  Since $m=p-a-b+2$, these
conditions are equivalent to
\begin{equation}\label{rangealllarge}
 b\ge3,\qquad m=p-a-b+2,\qquad
 3\le m\le p-2b-1.
\end{equation}
Write $n=p-a=m+b-2$, the degree of $F_{A^c}$.
We use the same degree assumptions for split solutions of
\eqref{witnessidentity} with distinct roots; in that setting,
Lemma~\ref{reconstruction} recovers the corresponding critical pair.

We first remove the common translation freedom.  A common
translation of $A$ and $B$ preserves criticality and translates
$A^c$ and $H$ by the same amount.  Since $b<p$, we may translate by
$-b^{-1}\sum_{x\in B}x$ and assume $\sum_{x\in B}x=0$.  We keep the
same notation for the translated sets.
This normalization removes the coefficient of $X^{b-1}$ from $F_B$.
Write
\begin{equation}\label{rootpolys}
\begin{aligned}
 F_B(X)&=X^b+\sum_{s=2}^{b}u_sX^{b-s},\\
 F_{A^c}(Z)&=\sum_{s=0}^{n}(-2)^sc_sZ^{n-s},\\
 F_H(Y)&=\sum_{s=0}^{m}h_sY^{m-s}.
\end{aligned}
\end{equation}
The factors $(-2)^s$ are chosen for the substitution
$2Y-X=-2(X/2-Y)$ in \eqref{reflectionpoly}.
Here $c_0=h_0=u_0=1$ and $u_1=0$.

The index $s$ records the drop from the leading degree in each
polynomial.  We therefore collect the three coefficients at level $s$ into
$\mathbf x_s=(c_s,h_s,u_s)^{\mathsf T}$ for $s\ge0$, and assign
$\operatorname{wt}(c_s)=\operatorname{wt}(h_s)=\operatorname{wt}(u_s)=s$.
Accordingly, the state $\mathbf x_s$ has weight $s$.  The initial state
$\mathbf x_0=(1,1,1)^{\mathsf T}$ is fixed by monicity and has weight zero.  Coefficients beyond the
actual degrees are always taken to be zero:
\begin{equation}\label{actual_degree_conditions}
 u_j=0\quad(j>b),\qquad
 c_j=0\quad(j>n),\qquad
 h_j=0\quad(j>m).
\end{equation}

Fix a level $1\le j\le b$, and regard the preceding states
$\mathbf x_0,\ldots,\mathbf x_{j-1}$ as specified.  The prospective
state $\mathbf x_j$ is subject to the same centering and degree
conventions; no extension satisfying the witness identity is assumed at
this stage.  Set
\begin{equation*}
 \mathbf h_j=(h_{j-1},\ldots,h_1)^{\mathsf T},\qquad
 \mathbf c_j=(c_{j-1},\ldots,c_1,c_0)^{\mathsf T}.
\end{equation*}
Thus $\mathbf h_j\in\F^{j-1}$ and $\mathbf c_j\in\F^j$;
$\mathbf h_1$ is the empty vector.  The basis vectors have indices
$0,\ldots,b-1$, whereas the rows of the level-$j$ system have indices
$1,\ldots,j$, related by $i=b-r$.

For $0\le i<b$ and $0\le\ell\le k\le b$, define
\begin{equation}\label{lambda}
 \lambda_i(k,\ell):=
 \frac{\fall{n-i+1}{k}(n-k+i+2\ell+1)}
 {2^\ell\fall n{k-\ell}\rise{i+1}{\ell}(n+i+1)}.
\end{equation}
All denominators are units under \eqref{rangealllarge}.

The preceding $u$-coefficients determine
\begin{equation*}
 F_B^{<j}(X):=X^b+\sum_{2\le q<j}u_qX^{b-q},
 \qquad
 \mathsf C_j:=\mathsf C_{F_B^{<j}},
\end{equation*}
where the companion matrix on the right is the one defined before
Lemma~\ref{companionremainder}.  Define the $j\times j$ matrix
\begin{equation*}
 (\mathsf R_j)_{r,\ell}:=
 \lambda_{b-r}(j,\ell)\,
 \mathbf e_{b-r}^{\mathsf T}\mathsf C_j^\ell \mathbf e_{b-r},
 \qquad 1\le r,\ell\le j.
\end{equation*}

For $1\le r\le j$, put $i=b-r$ and define the $r\times r$ matrix
\begin{equation*}
 \mathsf H_r:=
 \begin{pmatrix}
 1&0&\cdots&0\\
 h_1&1&\ddots&\vdots\\
 \vdots&\ddots&\ddots&0\\
 h_{r-1}&\cdots&h_1&1
 \end{pmatrix}.
\end{equation*}
This is $\mathsf H_{F_H,r-1}$ in Lemma~\ref{toeplitzdivision}.  Define
\begin{equation}\label{history_quotient_vector}
 \widehat{\mathbf q}_r:=
 \mathsf H_r^{-1}
 \operatorname{diag}\!\bigl(
  1,-\lambda_i(1,0),\ldots,(-1)^{r-1}\lambda_i(r-1,0)
 \bigr)
 \begin{pmatrix}c_0\\c_1\\\vdots\\c_{r-1}\end{pmatrix}
 =(1,\widehat q_{r,1},\ldots,\widehat q_{r,r-1})^{\mathsf T}.
\end{equation}
Thus $\widehat{\mathbf q}_r\in\F^r$, with
$\widehat{\mathbf q}_1=(1)$.  Assemble these coefficients into the
$j\times(j-1)$ matrix
\begin{equation*}
 \mathsf Q_j:=
 \begin{pmatrix}
 0&0&\cdots&0\\
 \widehat q_{2,1}&0&\cdots&0\\
 \widehat q_{3,1}&\widehat q_{3,2}&\ddots&\vdots\\
 \vdots&\vdots&\ddots&0\\
 \widehat q_{j,1}&\widehat q_{j,2}&\cdots&\widehat q_{j,j-1}
 \end{pmatrix},
\end{equation*}
with $\mathsf Q_1$ the unique $1\times0$ matrix.

Finally, define the $j\times3$ coefficient matrix
\begin{equation}\label{row}
 \mathsf M_j=
 \begin{pmatrix}
 (-1)^j\lambda_{b-1}(j,0)&-1&(-1)^{j+1}\lambda_{b-1}(j,j)\\
 (-1)^j\lambda_{b-2}(j,0)&-1&(-1)^{j+1}\lambda_{b-2}(j,j)\\
 \vdots&\vdots&\vdots\\
 (-1)^j\lambda_{b-j}(j,0)&-1&(-1)^{j+1}\lambda_{b-j}(j,j)
 \end{pmatrix}.
\end{equation}

\begin{proposition}\label{largegaprecursion}
Assume \eqref{rangealllarge} and \eqref{witnessidentity}.  For every
$1\le j\le b$,
\begin{equation}\label{coefficient_system}
 \mathsf M_j\mathbf x_j=\mathbf v_j,
 \qquad
 \mathbf v_j:=\mathsf Q_j\mathbf h_j
              +(-1)^{j+1}\mathsf R_j\mathbf c_j,
\end{equation}
where every entry of $\mathbf v_j$ is weighted-homogeneous of weight
$j$ in the preceding scalar coefficients.
\end{proposition}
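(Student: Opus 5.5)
The idea is to reduce the witness identity \eqref{witnessidentity} modulo $F_B$ in $X$ and read off the top coefficients in $Y$. Reducing $P(X,Y)$ modulo $F_B(X)$ gives $\operatorname{rem}_{F_B}(P)=F_H(Y)\,V^\ast(X,Y)$ with $\deg_XV^\ast<b$. Hence, for each $0\le i<b$, the polynomial
\[
 P_i(Y):=[X^i]\operatorname{rem}_{F_B}P(X,Y)
\]
is divisible by $F_H(Y)$. I first compute $P_i$ explicitly. Writing $2Y-X=-2(X/2-Y)$, we get $F_{A^c}(2Y-X)=\sum_s c_s(X-2Y)^{n-s}$ up to the sign $(-1)^n$. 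Multiplying by $X-Y$ and expanding binomially gives a sum of monomials $X^{i+\ell}Y^{t}$. Lemma~\ref{companionremainder} then converts each $X^{i+\ell}$ into coordinates $\mathbf e_k^{\mathsf T}\mathsf C_{F_B}^{\ell}\mathbf e_i$. Since $\operatorname{rem}_{F_B}(X^{i'})=X^{i'}$ for $i'<b$, only the terms with $i+\ell\ge b$ produce companion contributions. The diagonal entries $\mathbf e_i^{\mathsf T}\mathsf C^\ell\mathbf e_i$ are the ones surviving at the relevant weight.

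Next I normalize $P_i$ to be monic in $Y$. Its leading degree is $n+1-i$ up to a nonzero scalar, namely a binomial factor that is a unit under \eqref{rangealllarge}, which explains the factor $(n+i+1)$ in the denominator of \eqref{lambda}. With this in hand I apply Lemma~\ref{toeplitzdivision} with $F=F_H$ and $d=n+1-i-m$. The divisibility $F_H\mid P_i$ becomes the tail equations $\mathbf g^{\rm tail}=\mathsf B\mathsf H^{-1}\mathbf g^{\rm top}$. I will only use the equation at drop $j$ from the top of the quotient, with the row index $r=b-i$ running over $1\le r\le j$. The quotient coefficients $\mathsf H_r^{-1}(\cdots)$ built from the first $r$ normalized coefficients $(-1)^k\lambda_i(k,0)c_k$ are exactly $\widehat{\mathbf q}_r$ in \eqref{history_quotient_vector}. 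Pairing them with $h_{j-1},\dots,h_1$ gives the row $\mathsf Q_j\mathbf h_j$.

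The main step is isolating the terms of weight $j$ that contain a new unknown. These are three: $c_j$, from the binomial term with $\ell=0$; $h_j$, entering linearly with coefficient $-1$ from the Toeplitz product; and $u_j$. The last arises from $\mathsf C_{F_B}^{\,j}$, whose entry $\mathbf e_i^{\mathsf T}\mathsf C^j\mathbf e_i$ is linear in $u_j$ when $i=b-r$ with $r\le j$, through the last column $-a_j=-u_j$. Every other contribution of $\mathsf C_{F_B}^\ell$ depends only on $u_q$ with $q<j$, which is why $\mathsf C_j=\mathsf C_{F_B^{<j}}$ suffices and gives $\mathsf R_j\mathbf c_j$. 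Collecting the terms gives the row $\bigl((-1)^j\lambda_i(j,0),-1,(-1)^{j+1}\lambda_i(j,j)\bigr)$ of $\mathsf M_j$. Weighted homogeneity is then automatic, because each coefficient of $Y^{\deg-j}$ is a sum of products whose weights add to $j$ (the monomial $X^{i+\ell}$ reduced via $\mathsf C^\ell$ carries weight $\ell$).

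I expect the main obstacle to be bookkeeping rather than ideas: verifying that the explicit scalar from the binomial expansion after monic normalization equals $\lambda_i(k,\ell)$ with exactly those falling and rising factorials and power of $2$, including the sign $(-1)^{j+1}$. I would first check the formula at small $j$ ($j=1,2$) against a direct expansion, and then confirm the general product by comparing ratios for successive $\ell$.
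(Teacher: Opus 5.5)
Your proposal is correct and is essentially the paper's own proof. Like the paper, you reduce modulo $F_B$, normalize each $G_i$ by its nonzero leading coefficient $g_{i,0}$, apply Lemma~\ref{toeplitzdivision} with $d=r-1$ so that the unreduced top coefficients $(-1)^k\lambda_i(k,0)c_k$ yield $\widehat{\mathbf q}_r$, and use the weight count to see that $u_j$ enters only via $X^{i+j}\mapsto -u_jX^i$, with every other reduction term captured by $\mathsf C_j=\mathsf C_{F_B^{<j}}$.

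Two small repairs are needed. First, the substitution actually gives $F_{A^c}(2Y-X)=(-2)^n\sum_s c_s(X/2-Y)^{n-s}$; your version with $(X-2Y)^{n-s}$ drops factors $2^s$ and would put the wrong powers of $2$ into $\lambda_i(j,\ell)$. Second, for rows with $j>m+r-1$ (possible once $b>m$) there is no tail equation at index $j$. As the paper does, you must check directly that every $\lambda_{b-r}(j,\ell)$ vanishes (its numerator contains $\fall{m+r-1}{j}=0$) and that $h_{j-s}=0$ for $0\le s<r$, so the row is just the degree convention $h_j=0$.
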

\begin{proof}
Write
\[
 -2^{-n}P(X,Y)\equiv\sum_{i=0}^{b-1}G_i(Y)X^i
                  \pmod{F_B(X)}.
\]
By Lemma~\ref{gridlemma}, \eqref{witnessidentity} is equivalent to
$F_H\mid G_i$ for every $0\le i<b$.

Fix $1\le r\le j$ and put $i=b-r$.  Reduction modulo $F_B$ cannot
increase the total degree in $X,Y$, so $\deg G_i\le n+1-i=m+r-1$.
Write
\[
 G_i(Y)=\sum_{s=0}^{m+r-1}g_{i,s}Y^{m+r-1-s},
 \qquad g_{i,s}=0\quad(s>m+r-1).
\]
In the expansion of $-2^{-n}P$, the summand
$-(-1)^sc_s(X-Y)(Y-X/2)^{n-s}$ has total degree $n+1-s$ in $X,Y$.
Every nontrivial reduction step replaces a power $X^b$ by
$-\sum_{q=2}^{b}u_qX^{b-q}$ and strictly lowers this total degree.
Consequently the coefficient of $X^iY^{n+1-i}$ can only come from
the unreduced summand with $s=0$.  The direct coefficient identity
\[
 [X^qY^{\nu+1-q}]\bigl(-(X-Y)(Y-X/2)^\nu\bigr)
 =(-1)^q2^{-q}
 \left(\binom{\nu}{q}+2\binom{\nu}{q-1}\right)
\]
gives
\begin{equation}\label{leading_gi}
 g_{i,0}=(-1)^i2^{-i}
 \left(\binom ni+2\binom n{i-1}\right)
 =\begin{cases}
 1,&i=0,\\[1mm]
 (-1)^i\dfrac{\fall n{i-1}(n+i+1)}{2^ii!},&i\ge1,
 \end{cases}
\end{equation}
Thus $g_{i,0}\ne0$, since every factor in the displayed expression
is a unit under \eqref{rangealllarge}.  For $0\le\ell\le j$ and
$j\le n+1-i$, the term with
$s=j-\ell$ contributes to $X^{i+\ell}Y^{n+1-i-j}$ before reduction.
Removing the explicit sign factors gives the coefficient ratio
\begin{equation*}
 \frac{2^{-(i+\ell)}
  \left(\binom{n-j+\ell}{i+\ell}
              +2\binom{n-j+\ell}{i+\ell-1}\right)}
 {2^{-i}\left(\binom ni+2\binom n{i-1}\right)}
 =\lambda_i(j,\ell).
\end{equation*}
Here out-of-range binomial coefficients are zero.  In the nonzero
range, expanding the factorials and cancelling gives the expression
in \eqref{lambda}; when $j>n+1-i$, both sides are zero.
The sign before normalization is $(-1)^{s+i+\ell}=(-1)^{i+j}$.
After division by $g_{i,0}$, the remaining sign is $(-1)^j$, giving the
normalized contribution $(-1)^j\lambda_i(j,\ell)c_{j-\ell}$.

Apply Lemma~\ref{companionremainder} to $F_B^{<j}$.  By the definition
of $\mathsf R_j$, all contributions involving only
$u_2,\ldots,u_{j-1}$ are collected in
$(-1)^j(\mathsf R_j\mathbf c_j)_r$.  A use of $u_q$ lowers the total
degree in $X,Y$ by $q$.  Thus a contribution of weight $j$ involving
$u_j$ must start from $c_0=1$ and use $u_j$ exactly once, with no other
nonconstant coefficient.  This gives precisely
\[
 X^{i+j}=X^{i+j-b}X^b\longmapsto-u_jX^i.
\]
Therefore
\begin{equation}\label{matrix_normalized_gij}
 \frac{g_{i,j}}{g_{i,0}}
 =(-1)^j\lambda_i(j,0)c_j
  +(-1)^{j+1}\lambda_i(j,j)u_j
  +(-1)^j(\mathsf R_j\mathbf c_j)_r.
\end{equation}

For $1\le s<r$ we have $i+s<b$, so no reduction occurs.  Hence the
first $r$ coefficients of the monic polynomial $G_i/g_{i,0}$ are
exactly
\[
 \operatorname{diag}\!\bigl(
  1,-\lambda_i(1,0),\ldots,(-1)^{r-1}\lambda_i(r-1,0)
 \bigr)
 \begin{pmatrix}c_0\\c_1\\\vdots\\c_{r-1}\end{pmatrix}.
\]
Apply Lemma~\ref{toeplitzdivision} with $F=F_H$,
$G=G_i/g_{i,0}$, and $d=r-1$.  The quotient coefficient vector is
$\widehat{\mathbf q}_r$.  If $j\le m+r-1$, the coefficient identity at level $j$ is
\begin{equation*}
 \frac{g_{b-r,j}}{g_{b-r,0}}-h_j
 =\sum_{s=1}^{r-1}\widehat q_{r,s}h_{j-s}
 =(\mathsf Q_j\mathbf h_j)_r.
\end{equation*}
Combining this with \eqref{matrix_normalized_gij} gives the $r$-th row
of \eqref{coefficient_system} whenever $j\le m+r-1$.  If $j>m+r-1$, then
$\lambda_{b-r}(j,\ell)=0$ for every $0\le\ell\le j$, while
$h_j=h_{j-s}=0$ for $1\le s<r$; thus the same row equation reduces to
the degree convention $-h_j=0$.  Hence \eqref{coefficient_system}
holds for all $1\le r\le j$.

Finally, the coefficient
$\mathbf e_{b-r}^{\mathsf T}\mathsf C_j^\ell \mathbf e_{b-r}$ is
weighted-homogeneous of weight $\ell$ in the preceding $u$-coefficients,
so every term of $\mathsf R_j\mathbf c_j$ has weight $j$.
The triangular system defining $\widehat{\mathbf q}_r$ similarly gives
$\operatorname{wt}(\widehat q_{r,s})=s$, and therefore every term of
$\mathsf Q_j\mathbf h_j$ also has weight $j$.  This proves the stated
weight assertion.
\end{proof}

The formula defining $\mathbf v_j$ and the weight calculation above
are valid for arbitrary preceding states satisfying the centering and
degree conventions.  The witness identity implies that the current state satisfies
\eqref{coefficient_system}; satisfying a single level does not imply
that the data extend to a full polynomial solution.
Write $V_{r,j}:=(\mathbf v_j)_r$ when individual components are needed.

The first three levels require separate treatment.  We solve them
directly from
\eqref{coefficient_system}, starting with the monic initial state
$\mathbf x_0=(1,1,1)^{\mathsf T}$.  Put
$t=c_1=-\frac12\sum_{x\in A}x$ and $z=m+2b-3=n+b-1$, and
\begin{equation*}
 \Theta_b=\frac{2b(b^2-1)}3,\qquad
 \Delta_2=z^2-2z+2(b-2),\qquad
 \Delta_3=z^2-z+6(b-3).
\end{equation*}
We shall also use
\begin{equation}\label{vartheta3def}
 \vartheta_3=\frac{8(b^2-4)(z-1)z}{n^3(z+1)^3}\in\F^*.
\end{equation}
The displayed denominators are units because $n\ge b+1$, $z\ge6$,
and $z+3=m+2b\le p-1$.

At level $j=1$, one has $F_B^{<1}=X^b$, so $\mathsf R_1=0$,
while $\mathsf Q_1\mathbf h_1=0$.  Hence Proposition~\ref{largegaprecursion}
gives $\mathbf v_1=0$, and $\mathsf M_1\mathbf x_1=0$ is the single equation
$-\lambda_{b-1}(1,0)c_1-h_1+\lambda_{b-1}(1,1)u_1=0$.
The centering condition gives $u_1=0$ and
$\lambda_{b-1}(1,0)=mz/[n(z+1)]$.  Thus $c_1=t$ is the only free first-level scalar and
\begin{equation}\label{h1formula}
 \mathbf x_1=
 \left(t,-\frac{mz}{n(z+1)}t,0\right)^{\mathsf T}.
\end{equation}
In particular, $\mathsf M_1$ has rank one.

For later comparison, when $t\ne0$ define the arithmetic-progression
reference pair
\begin{equation}\label{referenceprogressions}
\begin{aligned}
 \delta&=-\frac{4t}{n(z+1)},\\
 B^0&=\{(k-(b-1)/2)\delta:0\le k<b\},\\
 A^0&=\{(k-(b-1)/2)\delta:0\le k<a\}.
\end{aligned}
\end{equation}
Since $n(z+1)=a(a-b)$ in $\F$, one has
$-\tfrac12\sum_{x\in A^0}x=t$.  The unscaled intervals show directly that
$(A^0,B^0)$ is critical: all sums from $1$ through $a+b-2$ have a
representation with unequal summands, whereas the only representation of $0$ is diagonal; no wraparound
occurs because $a+b\le p-1$.  Let
$H^0=(\F\setminus(A^0\rs B^0))/2$.  Using the normalization in
\eqref{rootpolys}, write
$\mathbf x_s^0=(c_s^0,h_s^0,u_s^0)^{\mathsf T}$ for the coefficient
states of this reference triple.  The superscript $0$ denotes reference
data.  We write $\mathbf v_j^0$ for the right-hand side evaluated on
the reference history.  At $t=0$, these symbols denote the formal data
$\mathbf x_0^0=(1,1,1)^{\mathsf T}$ and $\mathbf x_s^0=0$ for $s>0$.
In particular, $u_2^0=-\Theta_bt^2/[n^2(z+1)^2]$ and $u_3^0=0$.
The remaining reference coefficients through level three are given
in \eqref{reference_low_explicit} in Appendix~\ref{app:loworder}.

At level $j=2$, $F_B^{<2}=X^b$, hence $\mathsf R_2=0$.  Moreover,
\[
 \widehat{\mathbf q}_2=(1,\widehat q_{2,1})^{\mathsf T},\qquad
 \widehat q_{2,1}=-c_1\lambda_{b-2}(1,0)-h_1,
\]
so
\[
 \mathsf Q_2=
 \begin{pmatrix}0\\ \widehat q_{2,1}\end{pmatrix},\qquad
 \mathbf v_2=\mathsf Q_2\mathbf h_2
 =\begin{pmatrix}0\\ h_1\widehat q_{2,1}\end{pmatrix}.
\]
Thus $\mathsf M_2\mathbf x_2=\mathbf v_2$ is the explicit two-equation
system
\begin{equation}\label{second_level_system}
\begin{cases}
 \lambda_{b-1}(2,0)c_2-h_2-\lambda_{b-1}(2,2)u_2=0,\\
 \lambda_{b-2}(2,0)c_2-h_2-\lambda_{b-2}(2,2)u_2
      =h_1\widehat q_{2,1}.
\end{cases}
\end{equation}
The reference state $\mathbf x_2^0$ is a particular solution because
it has the same first state.  Subtracting the second row from the first,
the differences of the $c_2$- and $u_2$-coefficients are
\[
 -\frac{2m\Delta_2}{z(n-1)(z+1)n},\qquad
 \frac{m(n+1)(\Delta_2+6z+6)}{2b(b^2-1)z(z+1)}.
\]
If $\Delta_2\ne0$, the first difference is nonzero; if
$\Delta_2=0$, the second becomes
$3m(n+1)/(b(b^2-1)z)\ne0$.  Thus $\mathsf M_2$ has rank two, so its kernel is
one-dimensional.  Substitution in the homogeneous system associated with
\eqref{second_level_system} shows that its kernel is spanned by
$(\alpha_2,\beta_2,\gamma_2)^{\mathsf T}$, where
\begin{equation}\label{alpha2beta2}
\begin{aligned}
 \alpha_2&=\frac{(n-1)n(n+1)(\Delta_2+6z+6)}
                  {4b(b^2-1)n^2(z+1)^2},\\
 \beta_2&=\frac{m(m^2-1)(\Delta_2+3z)}
                  {4b(b^2-1)n^2(z+1)^2},\qquad
 \gamma_2=\frac{\Delta_2}{n^2(z+1)^2}.
\end{aligned}
\end{equation}
Consequently, every solution at level two has the unique representation
\begin{equation}\label{mregsecond}
 \mathbf x_2=\mathbf x_2^0+
       \sigma(\alpha_2,\beta_2,\gamma_2)^{\mathsf T},
       \qquad \sigma\in\F.
\end{equation}
At level $j=3$, one has
$F_B^{<3}(X)=X^b+u_2X^{b-2}$.  The definitions of $\mathsf R_3$ and
$\mathsf Q_3$ therefore give
\[
 \mathsf R_3=
 \begin{pmatrix}
  0&-u_2\lambda_{b-1}(3,2)&0\\
  0&-u_2\lambda_{b-2}(3,2)&0\\
  0&0&0
 \end{pmatrix},\qquad
 \mathsf Q_3=
 \begin{pmatrix}
  0&0\\
  \widehat q_{2,1}&0\\
  \widehat q_{3,1}&\widehat q_{3,2}
 \end{pmatrix},
\]
where
\[
 \widehat q_{3,1}=-c_1\lambda_{b-3}(1,0)-h_1,
 \qquad
 \widehat q_{3,2}=c_2\lambda_{b-3}(2,0)
                  -h_1\widehat q_{3,1}-h_2.
\]
Proposition~\ref{largegaprecursion} therefore gives
\begin{equation}\label{third_explicit_rows_main}
 \mathbf v_3=
 \begin{pmatrix}
 -c_1u_2\lambda_{b-1}(3,2)\\
 -c_1u_2\lambda_{b-2}(3,2)+h_2\widehat q_{2,1}\\
 h_2\widehat q_{3,1}+h_1\widehat q_{3,2}
 \end{pmatrix}.
\end{equation}
Set
\[
 A_r=-\lambda_{b-r}(3,0),\qquad
 B_r=\lambda_{b-r}(3,3)\qquad(1\le r\le3).
\]
Then the third-level system is
\begin{equation}\label{third_level_system}
 A_rc_3-h_3+B_ru_3=V_{r,3},\qquad r=1,2,3.
\end{equation}
To eliminate $c_3$ and $h_3$, take
$(\eta_1,\eta_2,\eta_3)=(A_3-A_2,A_1-A_3,A_2-A_1)$.  By construction, $\sum_{r=1}^3\eta_r=\sum_{r=1}^3\eta_rA_r=0$.  Substitution of
\eqref{lambda} gives
\[
 \sum_{r=1}^3\eta_rB_r=\det \mathsf M_3=-K_3\Delta_3,
\]
where
\[
 K_3=\frac{9m^2(m^2-1)(n+1)}
 {2b(b^2-1)(b^2-4)z(z-1)n(n-1)(n-2)}\in\F^*.
\]
On the right-hand side, substitute
\eqref{h1formula} and \eqref{mregsecond} into
\eqref{third_explicit_rows_main}.  The reference contribution cancels,
and the remaining $t\sigma$-part satisfies
\[
 \sum_{r=1}^3\eta_rV_{r,3}=K_3\vartheta_3t\sigma.
\]
The finite simplification of this last identity is recorded in
Appendix~\ref{app:loworder}; no division by $\Delta_2$ is used.
Applying the combination to \eqref{third_level_system} and cancelling
the unit $K_3$ yields the necessary and sufficient compatibility
relation
\begin{equation}\label{mregthird}
 \Delta_3u_3+\vartheta_3t\sigma=0.
\end{equation}

The rank of $\mathsf M_3$ follows from these identities.  If $\Delta_3\ne0$, then
$\det \mathsf M_3\ne0$, so the full system uniquely determines the third state, with $u_3$
given by \eqref{mregthird}.  If $\Delta_3=0$, the minor in the
$c_3,h_3$ columns of the first two rows equals
\[
 A_2-A_1=
 -\frac{2m(m-1)(z-1)(z-3)}{z(z+1)n(n-1)(n-2)}\ne0.
\]
Thus $\mathsf M_3$ has rank two.  For each prescribed $u_3$, the first two
rows determine $c_3,h_3$, and the third row is equivalent to
\eqref{mregthird}.  Hence a third state exists exactly when
$t\sigma=0$, and then $u_3$ is the remaining free parameter.

When $t\ne0$, the consistent third-level data can be written with a
single parameter $\xi$:
\begin{equation}\label{mregxi}
 \sigma=\Delta_3\xi,\qquad
 u_3=-\vartheta_3t\xi.
\end{equation}
Indeed, if $\Delta_3\ne0$ take $\xi=\sigma/\Delta_3$; if
$\Delta_3=0$, consistency gives $\sigma=0$ and one takes
$\xi=-u_3/(\vartheta_3t)$.  The choice $\xi=0$ gives the reference
states through order three.  Explicit formulas for $c_3,h_3$ needed
only in the later finite checks are recorded in
Appendix~\ref{app:loworder}, see \eqref{c3h3regular}--\eqref{alpha3beta3}.

Thus $\mathsf M_1$ has rank one, $\mathsf M_2$ has rank two, and
$\mathsf M_3$ has rank three except on $\Delta_3=0$, where it has
rank two.  The next lemma shows that the coefficient matrices have
full column rank throughout the remaining range $4\le j\le b$.

\begin{lemma}\label{rank}
Assume \eqref{rangealllarge}.  For every $4\le j\le b$, the matrix
$\mathsf M_j$ has rank three.
\end{lemma}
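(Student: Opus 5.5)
Since $\mathsf M_j$ is a $j\times 3$ matrix with $j\ge4$, it suffices to exhibit a nonzero $3\times3$ minor.  Its middle column is constant ($-1$), so the rank is three exactly when the vectors $(1,\phi_r,\psi_r)$, with
\[
 \phi_r=(-1)^j\lambda_{b-r}(j,0),\qquad \psi_r=(-1)^{j+1}\lambda_{b-r}(j,j),\qquad 1\le r\le j,
\]
do not all lie in a plane.  Equivalently, the points $(\phi_r,\psi_r)$ in $\F^2$ are not collinear.  The plan is to make both coordinates explicit as rational functions of $i=b-r$ and show that three of them cannot lie on a common line.

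First I would specialize \eqref{lambda}.  For $\ell=0$,
\[
 \lambda_i(j,0)=\frac{\fall{n-i+1}{j}\,(n-j+i+1)}{\fall nj\,(n+i+1)},
\]
and for $\ell=j$,
\[
 \lambda_i(j,j)=\frac{\fall{n-i+1}{j}\,(n+j+i+1)}{2^j\rise{i+1}{j}\,(n+i+1)}.
\]
Both carry the common factor $w_i=\fall{n-i+1}{j}/(n+i+1)$, which is a unit for $i=b-r$ with $r\le j\le b$ and $j\le n+1-i$; this holds under \eqref{rangealllarge}, since $n\ge b+1$ and $m\ge3$.  Collinearity of the points $(\phi_r,\psi_r)$ means there are constants $\kappa_0,\kappa_1,\kappa_2$, not all zero, with
\[
 \kappa_0+\kappa_1\phi_r+\kappa_2\psi_r=0\qquad\text{for all }1\le r\le j.
\]

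Next I would clear denominators.  Multiply this relation by $(n+i+1)\rise{i+1}{j}$.  The result is a polynomial identity in $i$ that must hold at the $j$ values $i=b-1,\dots,b-j$:
\[
 \kappa_0(n+i+1)\rise{i+1}{j}
 +\kappa_1'\fall{n-i+1}{j}\,(n-j+i+1)\,\rise{i+1}{j}
 +\kappa_2'\fall{n-i+1}{j}\,(n+j+i+1)=0,
\]
where $\kappa_1',\kappa_2'$ absorb the constant nonzero factors.  This polynomial has degree at most $2j+1$ in $i$, which exceeds the number $j$ of evaluation points, so a naive degree count is not enough.  I would instead work with $4\le j$ and use only four points, for example $r=1,2,3,4$, reducing to the single $4\times 3$ matrix $\mathsf M_j$ restricted to those rows.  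I would then compute two explicit $3\times3$ minors, say rows $\{1,2,3\}$ and $\{1,2,4\}$, in the style of the computation of $\det\mathsf M_3=-K_3\Delta_3$.  Each minor should factor as a unit times a polynomial $\Delta^{(j)}$ in $(n,b,j)$ or $(z,b,j)$.  The task is then to show that these polynomials have no common zero in $\F$ under \eqref{rangealllarge}.  As at level two, one minor can vanish only on a quadric, and on that quadric the other reduces to a product of visibly nonzero factors such as $m$, $m\pm1$, $n-k$, $z-k$.

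The main obstacle is this last step.  The ratios between rows involve $\fall{n-i+1}{j}/\fall{n-i}{j}=(n-i+1)/(n-i-j+1)$ and $\rise{i+1}{j}/\rise{i}{j}=(i+j)/i$, so the minors are rational functions of $j$ of moderate degree.  One then needs a resultant computation to exclude a common root modulo $p$, using only units guaranteed by \eqref{rangealllarge}: $j\le b$, $n\ge b+1$, and $z+3\le p-1$.  If this common-root check fails for some small characteristic, I would bring in a fifth row ($j\ge5$ only if needed, though $j=4$ has just four rows).  Failing that, I would exploit the extra structure that $\psi_r/\phi_r$ is a Möbius-type function of $i$ times $\fall nj/(2^j\rise{i+1}{j})$.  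That ratio is not affine-linear along the row points, and non-collinearity would follow from distinctness of the $i$ values together with the nonvanishing of a single cross-ratio-type determinant.
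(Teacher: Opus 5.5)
There is a genuine gap, and the concrete plan fails as stated. You claim that $w_i=\fall{n-i+1}{j}/(n+i+1)$ is a unit on every row, but this is false. With $i=b-r$ one has $n-i+1=m+r-1$, so the condition $j\le n+1-i$ that you invoke is $r\ge j-m+1$. Nothing in \eqref{rangealllarge} forces $j\le m$: one may have $m=3$ while $j$ runs up to $b$. For $r\le j-m$ the product $\fall{m+r-1}{j}$ contains the factor $0$, and the row of $\mathsf M_j$ is exactly $(0,-1,0)$. These are the rows that the proof of Proposition~\ref{largegaprecursion} identifies with the degree convention $-h_j=0$.

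In particular, your rows $r=1,2,3,4$ all equal $(0,-1,0)$ whenever $j\ge m+4$; for instance $p=19$, $m=3$, $j=b=7$, $a=11$ is admissible. Both minors on $\{1,2,3\}$ and $\{1,2,4\}$ then vanish identically, and no resultant computation can rescue them. The rows to use are the last ones, $r=j-3,\dots,j$, i.e.\ $x_r=m+r-1\in\{m+j-4,\dots,m+j-1\}$. There the falling-factorial factors that survive in the minors run from $m-2\ge1$ up to at most $n<p$, so they are units.

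Even with the correct rows, the decisive step is only predicted, not carried out. You expect each minor to be a unit times a polynomial and hope a resultant excludes a common zero in every admissible characteristic, with fallbacks if it does not. The missing idea is the structure of \emph{consecutive} minors. View your $\phi_r,\psi_r$ as rational functions $L_j(x),R_j(x)$ of $x=x_r$, and put $\Delta T(x)=T(x+1)-T(x)$. Subtracting rows shows that the minor on the three rows with parameters $x,x+1,x+2$ equals $\Delta L_j(x)\Delta R_j(x+1)-\Delta L_j(x+1)\Delta R_j(x)$. This factors as $\Gamma_j(x)\Pi_j(x)$, where $\Gamma_j$ is a product of explicit linear factors, all units at $x=m+j-4$ and $x=m+j-3$, and
\[
 \Pi_j(x)=4n^2-4nx+8n-j^2+j+x^2-7x
\]
is quadratic in $x$. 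Its first difference at $x_*=m+j-4$ is $\Pi_j(x_*+1)-\Pi_j(x_*)=-2(m+2b-j+3)$. This is a unit because $1\le m+2b-j+3\le p-2$, so the two adjacent minors cannot vanish together. This first-difference observation replaces any resultant and works uniformly in the characteristic. Without it, or an equivalent explicit computation on the last four rows, your proposal does not prove the lemma.
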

\begin{proof}
For the row indexed by $r$, put $x_r=m+r-1$.  Since $n=m+b-2$, the
$r$-th row of $\mathsf M_j$ in \eqref{row} can be written as
$(L_j(x_r),-1,R_j(x_r))$, where
\[
 L_j(x)=(-1)^j\frac{\fall{x}{j}(2n-j+2-x)}
 {\fall{n}{j}(2n+2-x)},\qquad
 R_j(x)=(-1)^{j+1}\frac{\fall{x}{j}(2n+j+2-x)}
 {2^j\rise{n+2-x}{j}(2n+2-x)}.
\]
For any admissible $x$, let $\mathsf N_j(x)$ be the $3\times3$ matrix
whose rows are $(L_j(x+q),-1,R_j(x+q))$ for $q=0,1,2$, and set
$x_*=m+j-4$.  Then $\mathsf N_j(x_*)$ and $\mathsf N_j(x_*+1)$ are the submatrices of $\mathsf M_j$ on the
consecutive row triples $(j-3,j-2,j-1)$ and $(j-2,j-1,j)$,
respectively.  It is enough to show that their determinants do not
vanish simultaneously.

Write $\Delta T(x)=T(x+1)-T(x)$.  Subtracting the first row of
$\mathsf N_j(x)$ from the other two gives
\begin{equation}\label{rank_difference_determinant}
 \det \mathsf N_j(x)=
 \Delta L_j(x)\Delta R_j(x+1)
 -\Delta L_j(x+1)\Delta R_j(x).
\end{equation}
To factor the two finite differences, set
\[
\begin{aligned}
 E_j(x)&=-2jn+jx-j+4n^2-4nx+6n+x^2-4x+1,\\
 T_j(x)&= 2jn-jx+3j+4n^2-4nx+10n+x^2-6x+5.
\end{aligned}
\]
Using
$\fall{x+1}{j}=(x+1)\fall{x}{j-1}$ and
$\fall{x}{j}=(x-j+1)\fall{x}{j-1}$, one obtains
\[
 \Delta L_j(x)=
 \frac{(-1)^j j\fall{x}{j-1}E_j(x)}
 {\fall{n}{j}(2n+2-x)(2n+1-x)},
\]
\[
 \Delta R_j(x)=
 \frac{(-1)^{j+1}j(n+1)\fall{x}{j-1}T_j(x)}
 {2^j\rise{n+2-x}{j}(n+1-x)(2n+2-x)(2n+1-x)}.
\]
Substituting these expressions into
\eqref{rank_difference_determinant} and taking a common denominator,
the remaining numerator reduces by the identity
\[
\begin{split}
&E_j(x)T_j(x+1)(n+j+1-x)
 -(n-x)E_j(x+1)T_j(x)\\
&\qquad=(j+1)(2n+2-x)(2n+1-x)\Pi_j(x),
\end{split}
\]
where
\begin{equation}\label{detfactor}
\begin{aligned}
 \Pi_j(x)&=4n^2-4nx+8n-j^2+j+x^2-7x,\\
 \det \mathsf N_j(x)&=\Gamma_j(x)\Pi_j(x),\\
 \Gamma_j(x)&=
 -\frac{j^2(j+1)(n+1)(x+1)\fall{x}{j-1}^{2}}
 {2^j\fall{n}{j}\rise{n+2-x}{j}
 (2n-x)(n-x)(2n-x+1)(n-x+1)(x-j+2)}.
\end{aligned}
\end{equation}

We now specialize to $x=x_*+\ell$, $\ell=0,1$.  Every factor in
$\Gamma_j(x)$ is then a unit in $\F$.  Indeed, using
$4\le j\le b$, $3\le m\le p-2b-1$, and $n=m+b-2$, we have
\[
\begin{gathered}
 1\le x-j+2=m-2+\ell,\qquad x+1<p,\\
 2\le n-j+1\le n+1<p,\\
 3\le n+2-x\le n+j+1-x\le b+3<p,\\
 1\le n-x=b-j+2-\ell\le b-2,\\
 1\le 2n-x<2n-x+1<p,
\end{gathered}
\]
and also $1\le j<j+1<p$.  These bounds cover every factor occurring
in the numerator and denominator of $\Gamma_j(x)$.

Finally,
\begin{equation}\label{detdifference}
 \Pi_j(x_*+1)-\Pi_j(x_*)=-2(m+2b-j+3)\ne0,
\end{equation}
because
$1\le m+2b-j+3\le p-j+2\le p-2$.  Hence
$\Pi_j(x_*)$ and $\Pi_j(x_*+1)$ cannot both vanish.  By
\eqref{detfactor}, at least one of the two consecutive $3\times3$
minors is nonzero.  Therefore $\operatorname{rank}\mathsf M_j\ge3$; since
$\mathsf M_j$ has three columns, $\operatorname{rank}\mathsf M_j=3$.
\end{proof}

\begin{corollary}\label{uniquecontinuation}
Assume \eqref{rangealllarge} and $4\le j\le b$.  Once the preceding
states $\mathbf x_1,\ldots,\mathbf x_{j-1}$ are fixed, the system
\[
 \mathsf M_j\mathbf x_j=\mathbf v_j
\]
has at most one solution.  If $\mathbf v_j=0$, its unique solution is
$\mathbf x_j=0$.
\end{corollary}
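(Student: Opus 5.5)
The corollary is immediate linear algebra once Lemma~\ref{rank} is in hand. The plan is to note first that, with the preceding states $\mathbf x_1,\ldots,\mathbf x_{j-1}$ fixed, every ingredient on the right-hand side of \eqref{coefficient_system} is determined. The matrices $\mathsf M_j$, $\mathsf R_j$, $\mathsf Q_j$ and the vectors $\mathbf h_j$, $\mathbf c_j$ depend only on $b$, $m$, $n$ and on the coefficients of levels strictly below $j$. Indeed, $\mathsf M_j$ depends only on the parameters through the $\lambda$'s, while $\mathsf R_j$ uses $F_B^{<j}$, which involves only $u_2,\ldots,u_{j-1}$. Likewise $\widehat{\mathbf q}_r$ for $r\le j$ uses only $c_0,\ldots,c_{j-1}$ and $h_1,\ldots,h_{j-1}$. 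Thus $\mathsf M_j\mathbf x_j=\mathbf v_j$ is a linear system in the single unknown vector $\mathbf x_j=(c_j,h_j,u_j)^{\mathsf T}\in\F^3$, with a fixed $j\times3$ coefficient matrix and a fixed right-hand side.

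For uniqueness, suppose $\mathbf x_j$ and $\mathbf x_j'$ both solve the system. Subtracting the two equations gives $\mathsf M_j(\mathbf x_j-\mathbf x_j')=0$. By Lemma~\ref{rank}, $\mathsf M_j$ has rank three for $4\le j\le b$, so its three columns are linearly independent and its kernel is trivial. Hence $\mathbf x_j=\mathbf x_j'$, and the system has at most one solution.

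For the second assertion, when $\mathbf v_j=0$ the zero vector is evidently a solution of the homogeneous system. By the uniqueness just established, it is the only one, so $\mathbf x_j=0$.

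No step here is a real obstacle. The only point to check is that the right-hand side is independent of $\mathbf x_j$, which follows from the weight and index bookkeeping noted in the proof of Proposition~\ref{largegaprecursion}. All the substantive work was already done in Lemma~\ref{rank}.
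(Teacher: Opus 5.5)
Your proposal is correct and follows the paper's own proof: the right-hand side $\mathbf v_j$ is fixed by the preceding states, and the rank-three conclusion of Lemma~\ref{rank} gives $\ker\mathsf M_j=\{0\}$, which yields both uniqueness and $\mathbf x_j=0$ when $\mathbf v_j=0$. Your explicit check of which lower-level coefficients enter $\mathsf R_j$, $\mathsf Q_j$, $\mathbf h_j$ and $\mathbf c_j$ is a harmless elaboration of the paper's one-line remark.
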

\begin{proof}
The explicit formula in \eqref{coefficient_system} fixes $\mathbf v_j$
from the preceding states.  By Lemma~\ref{rank},
$\ker\mathsf M_j=\{0\}$, which proves both assertions.
\end{proof}

\begin{corollary}\label{supportpropagation}
Assume \eqref{rangealllarge}, let $4\le j\le b$, and suppose that, for
some $d\ge2$,
\[
 \mathbf x_s=0\qquad(1\le s<j,\ d\nmid s).
\]
If $d\nmid j$, then $\mathbf v_j=0$.  Consequently, every solution
$\mathbf x_j$ of \eqref{coefficient_system} is zero.
\end{corollary}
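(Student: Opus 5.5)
The plan is a weight argument. Every entry of $\mathbf v_j$ is a polynomial in the preceding scalar coefficients, and Proposition~\ref{largegaprecursion} says each monomial has total weight $j$. So $\mathbf v_j=0$ will follow once every weight-$j$ monomial is shown to contain a factor of weight not divisible by $d$. Corollary~\ref{uniquecontinuation}, which rests on Lemma~\ref{rank}, then gives $\mathbf x_j=0$.

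First I need the weight statement in a sharper form than the bare homogeneity assertion. I will check that each entry of $\mathbf v_j$ is a sum of products
\[
y_1y_2\cdots y_k, \qquad y_\ell\in\{c_s,h_s,u_s: 1\le s<j\}\cup\{c_0\}, \qquad \sum_\ell \operatorname{wt}(y_\ell)=j,
\]
with coefficients built from the $\lambda$'s, which are constants.

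I will read this off the construction.
\begin{itemize}
\item $\mathsf R_j\mathbf c_j$: the entry $\mathbf e_{b-r}^{\mathsf T}\mathsf C_j^\ell\mathbf e_{b-r}$ is a polynomial in $u_2,\dots,u_{j-1}$ whose monomials have weight exactly $\ell$. This is the reduction $X^b\mapsto-\sum u_qX^{b-q}$, which lowers degree by $q$. It multiplies $c_{j-\ell}$, which has weight $j-\ell$.
\item $\mathsf Q_j\mathbf h_j$: expand $\mathsf H_r^{-1}$ as a Neumann series in the strictly lower-triangular part, whose $(s,s')$ entries $h_{s-s'}$ have weight $s-s'$. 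Then $\widehat q_{r,s}$ is a polynomial in $c_1,\dots,c_s$ and $h_1,\dots,h_s$ whose monomials have weight exactly $s$. It multiplies $h_{j-s}$.
\end{itemize}
All indices that appear are below $j$. The factor $c_0=1$ has weight $0$, which is divisible by $d$.

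Next comes the divisibility step. Take any monomial in which every factor of positive weight has weight divisible by $d$. Then its total weight $j$ is a sum of multiples of $d$, so $d\mid j$, contradicting the hypothesis. Hence every monomial contains a factor $c_s$, $h_s$ or $u_s$ with $1\le s<j$ and $d\nmid s$. By hypothesis $\mathbf x_s=0$, so that factor vanishes, and therefore $\mathbf v_j=0$.

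The conclusion is then that, since $\mathbf v_j=0$ and $4\le j\le b$, Corollary~\ref{uniquecontinuation} shows the unique solution is $\mathbf x_j=0$.

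The main obstacle is making the homogeneity precise. I must ensure there are no hidden constant terms of nonzero weight and no dependence on $\mathbf x_j$ itself, for example through $u_j$ entering $\mathsf C_j$. That is excluded because $\mathsf C_j$ uses only $F_B^{<j}$. I must also ensure that $\mathsf Q_j$ uses $\widehat q_{r,s}$ only with $s\le j-1$. Verifying the Neumann-series weight bookkeeping for $\widehat{\mathbf q}_r$, including the diagonal scaling by the constants $\lambda_i(k,0)$, is where the care goes.
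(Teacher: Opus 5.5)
Your proposal is correct and follows essentially the same route as the paper. Every entry of $\mathbf v_j$ is weighted-homogeneous of weight $j$, so each weight-$j$ monomial contains a factor of index $s<j$ with $d\nmid s$, and that factor vanishes; Corollary~\ref{uniquecontinuation} then gives $\mathbf x_j=0$. The only difference is that you re-derive the homogeneity explicitly from the companion-matrix reduction and the Toeplitz inverse $\mathsf H_r^{-1}$, whereas the paper simply cites the weight assertion already proved in Proposition~\ref{largegaprecursion}.
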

\begin{proof}
The matrix definitions make every entry of $\mathbf v_j$
weighted-homogeneous of weight $j$ in the preceding scalar coefficients,
as verified in the proof of Proposition~\ref{largegaprecursion}; this
property does not require the history to extend to a full solution.
Under the hypothesis, every surviving monomial has weight divisible by
$d$.  Since $d\nmid j$, every such weight-$j$ monomial vanishes, so
$\mathbf v_j=0$.  Corollary~\ref{uniquecontinuation} now gives
$\mathbf x_j=0$ for every solution of the current system.
\end{proof}

The first three levels therefore reduce the possible initial data to
three disjoint branches: $t\ne0$; $t=0$, $\sigma\ne0$; and
$t=\sigma=0$.  In the first branch, the third-level compatibility is
parametrized by $\sigma=\Delta_3\xi$ and $u_3=-\vartheta_3t\xi$.
The other two branches distinguish $\sigma\ne0$ from $\sigma=0$ when
$t=0$.
Lemma~\ref{rank} and Corollaries~\ref{uniquecontinuation}
and~\ref{supportpropagation} control continuation for $4\le j\le b$.  It remains to determine which of these three low-order branches
extend to full split solutions.

\subsection{Finite compatibility and branch elimination}\label{eliminationdetails}

We now test which of the three low-order branches identified above
can extend to a full solution of \eqref{witnessidentity}, with the
centering and degree conventions of Subsection~\ref{recurrence} and
with all three root polynomials split over $\F$ with distinct roots.
The tests below are necessary for such an extension; compatibility at
one level alone is not sufficient for a full solution.

For $1\le j\le b$, Proposition~\ref{largegaprecursion} gives
\[
 \mathsf M_j\mathbf x_j=\mathbf v_j,
 \qquad
 \mathbf v_j=\mathsf Q_j\mathbf h_j+(-1)^{j+1}\mathsf R_j\mathbf c_j.
\]
For $4\le j\le b$, Lemma~\ref{rank} gives full column rank, so the
current linear system has a solution exactly when
$\mathbf v_j\in\operatorname{im}\mathsf M_j$, and that solution is
unique.  It must also satisfy the prescribed degree conditions.
For $j>b$ we do not use these $j$-row matrices: we apply
Lemmas~\ref{companionremainder} and~\ref{toeplitzdivision} to the actual
degree-$b$ polynomial, retaining only the rows $0\le i<b$ and setting
$u_j=0$.  The actual-degree equations and their row normalization are
given in Appendix~\ref{app:degreeboundary}; the two endpoint calculations
are in Appendix sections~\ref{app:cubic} and~\ref{app:quartic}.

Set $\Psi_4=z^2+z+2(b-4)$.  For $b\ge4$, evaluating
$\mathbf v_4=\mathsf Q_4\mathbf h_4-\mathsf R_4\mathbf c_4$ gives the
first compatibility test:
\begin{align}
 \xi\bigl(\Delta_3^2\Psi_4\xi+4\Theta_bz(z+2)t^2\bigr)&=0
                       &&(t\ne0),\label{mregfourthnonzero}\\
 \Psi_4\sigma^2&=0&&(t=0).\label{mregfourthzero}
\end{align}
For $b\ge4$ these are exactly the compatibility conditions of the full
fourth-level system.  For $b=3$, both conditions remain necessary after
imposing $u_4=0$ in the actual-degree equations.  The specialization is
justified in Appendix~\ref{app:degreeboundary}, and the factorizations
and unit checks are given in Appendix~\ref{app:fourth}.

For $t\ne0$, \eqref{mregfourthnonzero} gives either $\xi=0$ or
\begin{equation}\label{mregxicandidate}
 \Delta_3\Psi_4\ne0,
 \qquad
 \xi=-\frac{4\Theta_bz(z+2)t^2}{\Delta_3^2\Psi_4}.
\end{equation}
For $t=0$, the second condition says that the branch $\sigma\ne0$
can survive level four only on $\Psi_4=0$.  We now show that the only
branch that survives all finite compatibility tests is the reference
branch, apart from the quartic characteristic-$13$ exception.

\begin{lemma}\label{branchscreening}
Assume \eqref{rangealllarge} and \eqref{witnessidentity}, with
$u_1=0$, and suppose that $F_B,F_{A^c},F_H$ split over $\F$ with
distinct roots.  The three low-order branches satisfy the following conclusions.
\begin{enumerate}[label=\textup{(\roman*)},nosep]
\item If $t\ne0$, then $\xi=0$.
\item There is no solution with $t=0$ and $\sigma\ne0$.
\item If $t=\sigma=0$, then necessarily $p=13$, $b=m=4$, and for some
$v\ne0$,
\begin{equation}\label{quartictriple}
 F_B=X^4+vX,\qquad F_H=Y^4+vY,\qquad
 F_{A^c}=Z^6+7vZ^3+5v^2.
\end{equation}
\end{enumerate}
\end{lemma}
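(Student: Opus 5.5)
The plan is to treat the three branches separately. In each, I push the recursion $\mathsf M_j\mathbf x_j=\mathbf v_j$ one to three levels beyond level three. I then close with the actual-degree endpoint equations of Appendix~\ref{app:degreeboundary} whenever $b$ is too small for the $j$-row system to be available.

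For (i), suppose $t\ne0$ and $\xi\ne0$. By \eqref{mregxicandidate}, $\xi$ is then a fixed rational function of $t$ times $t^2$, with $\Delta_3\Psi_4\ne0$. Since $\mathsf M_4$ has full rank for $b\ge4$, Corollary~\ref{uniquecontinuation} determines $\mathbf x_4$ uniquely. I would then evaluate $\mathbf v_5$ on this history and test $\mathbf v_5\in\operatorname{im}\mathsf M_5$ via a $4\times4$ bordered determinant. Everything is weighted-homogeneous, and $t$ has weight $1$ while the candidate $\xi$ has weight $2$ relative to $t$. So the compatibility condition becomes $t^5$ times a polynomial $\Phi_5(z,b)$ divided by powers of $\Delta_3\Psi_4$, and the task is to show $\Phi_5$ is a unit under \eqref{rangealllarge}. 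If $\Phi_5$ can vanish on some curve, I would add the level-six test and show that the two resultants share no admissible common zero. For $b\in\{3,4,5\}$, the $j$-row system stops before these levels, and the same tests are replaced by the endpoint equations with $u_j=0$ for $j>b$. Those endpoint equations are themselves incompatible with $\xi\ne0$, because they force $u_3$ and $u_4$ to vanish, whereas \eqref{mregxi} gives $u_3=-\vartheta_3t\xi\ne0$.

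For (ii), take $t=0$. Then $\mathbf x_1=0$, and \eqref{mregfourthzero} forces $\Psi_4=0$. The history has only even-weight nonzero states through level two, since $\mathbf x_3$ is forced to zero when $t=0$ and $\Delta_3\ne0$; the case $\Delta_3=0$ needs separate care because $u_3$ is then free. If $u_3=0$, Corollary~\ref{supportpropagation} with $d=2$ gives $\mathbf x_j=0$ for all odd $j\le b$, so $F_B,F_H,F_{A^c}$ are polynomials in the square of the variable. In particular, $B=-B$ and $H=-H$. I would then use level six, or the endpoint equations, with $\Psi_4=0$ substituted. Because $\Psi_4$ and $\Delta_3$ are quadratics in $z$ with distinct discriminants, their simultaneous vanishing is a finite condition, and one checks it gives a nonzero multiple of $\sigma^3$. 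The residual case $\Delta_3=0$ with $u_3\ne0$ is handled by the mixed weight-$(2,3)$ term at level five.

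For (iii), $t=\sigma=0$ gives $\mathbf x_1=\mathbf x_2=0$, and only $u_3$ (free exactly when $\Delta_3=0$) can start the data. If $u_3=0$ as well, then $d=3$ support propagation together with uniqueness gives, by induction, that all states vanish below the endpoint. Thus $F_B=X^b$, which contradicts distinct roots since $b\ge3$. So $\Delta_3=0$ and $u_3=v\ne0$. Support propagation with $d=3$ kills every level not divisible by $3$. Level six, or the endpoint for $b\le5$, must then be compatible, and this should force $b=4$: for $b\ge5$ an incompatibility appears at level six, and for $b=3$ the endpoint gives $F_B=X^3+v$, which is incompatible. With $b=4$, solving the actual-degree equations forces $m=4$, $n=6$, and $\Delta_3=z^2-z+6=0$ with $z=9$, so $78\equiv0$, giving $p=13$. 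The explicit solution \eqref{quartictriple} then follows by back-substitution.

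The main obstacle is the level-five and level-six elimination in (i). It requires explicit closed forms of $\mathbf v_5$ and $\mathbf v_6$ and a proof that the resulting polynomial in $(z,b)$ has no zeros in the admissible range. I would first attempt a factorization into linear factors in $z$ that are visibly units, in the style of $\Pi_j$ in Lemma~\ref{rank}, and fall back on a resultant argument if that fails.
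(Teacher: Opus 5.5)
Your branch decomposition matches the paper's: level five for $t\ne0$, level six for the two $t=0$ branches, and actual-degree equations at the endpoints. However, the step you use to dispose of small $b$ in (i) is false. The actual-degree equations impose $u_j=0$ only for $j>b$. For $b=3$, $u_3$ is the constant term of $F_B$, and for $b=4$ both $u_3$ and $u_4$ are genuine coefficients. Nothing in those equations forces them to vanish, so $u_3=-\vartheta_3t\xi\ne0$ is no contradiction. That $u_3=0$ on every actual solution with $t\ne0$ is what the lemma proves, so it cannot be an input. Genuine computations are needed here:
\begin{itemize}
\item For $b=3$, the endpoint equations at levels four and five with $u_4=u_5=0$ leave the residual \eqref{cubicfifth}, which is a product of units for $3\le m\le p-7$.
\item For $b=4$, the actual-degree level-five equations with $u_5=0$ give the same two conditions as in the stable case $b\ge5$.
\end{itemize}
Also, $b=5$ is not an endpoint case at level five.

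The second gap is in how the tests close. For $4\le j\le b$, the condition $\mathbf v_j\in\operatorname{im}\mathsf M_j$ amounts to $j-3$ conditions, not one bordered determinant. In none of the branches is a single residual a unit; each case closes only through a joint-vanishing argument plus side conditions.
\begin{itemize}
\item In (i), the two level-five residuals $F_5,G_5$ do have common zeros, namely $\Delta_0=0$, $m=3$. They are excluded only via $(2z^2-4z+38)F_5-(20-2z)G_5=2(z+1)(z+2)^2\Delta_0$, together with $\Delta_3=\Delta_0-3(m-3)$ and the standing condition $\Delta_3\ne0$ from \eqref{mregxicandidate}.
\item In (ii), first note that $\Psi_4\ne0$ for $b=3,4$, so $b\ge5$ and $p\ge17$. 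On $\Psi_4=0$ one has $\Delta_3=-2(z-1)(z+3)\ne0$, so your residual case $\Delta_3=0$ is empty. The two level-six residuals $K_6=z^2-5z-26$ and $S_6=57z+182$ then satisfy $57^2K_6-(57z-467)S_6=520$. Since $13\mid520$, the bound $p\ge17$ is essential, not a formality. Separately, support propagation controls only levels $\le b$, so it does not make $F_{A^c}$ or $F_H$ even.
\item In (iii), $b=3$ is excluded simply because $\Delta_3=z(z-1)\ne0$ there. The case $b=5$ is excluded because $F_B=X^5+vX^2$ has a double root at $0$, with no level six needed.
\item For $b=4$ your order is reversed: the level-six equations cannot force $m=4$ before $p$ is known. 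One residual is $-351v^2/3200$, which forces $p=13$. Only then do $3\le m\le p-2b-1=4$ and $\Delta_3=m^2+9m+26=0$ leave $m=4$.
\end{itemize}
As written, ``one checks it gives a nonzero multiple of $\sigma^3$'' and ``should force $b=4$'' stand in for exactly these arguments.
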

\begin{proof}
Suppose first that $t\ne0$ and $\xi\ne0$.  Then
\eqref{mregxicandidate} holds, and by a common dilation we may normalize
$t=1$.  If $b=3$, the actual-degree coefficient equations at levels
four and five give the residual \eqref{cubicfifth} in
Appendix~\ref{app:cubic}, which must vanish:
\[
 -\frac{48m(m-2)(m-1)(m+3)(m+5)(m+6)}
 {7(m+1)^4(m+2)^3(m+4)^4}=0.
\]
Here $3\le m\le p-7$, so every displayed factor is a unit, a
contradiction.

Let $b\ge4$.  The fourth-level system uniquely determines
$\mathbf x_4$.  For $b\ge5$, insert this state into $\mathbf v_5$ and
test the compatibility of $\mathsf M_5\mathbf x_5=\mathbf v_5$.
For $b=4$, use instead the actual-degree fifth-level equations with
$u_5=0$.  The calculation in Appendix~\ref{app:fifth} gives in both cases the two
necessary conditions $F_5=G_5=0$, where
\[
\begin{aligned}
 \Delta_0&=z^2+2z-18,\\
 F_5&=(z+6)\Delta_0+(m-3)(z-10),\\
 G_5&=(z+14-m)\Delta_0+(m-3)(4z-37).
\end{aligned}
\]
The normalization and factorization of the residuals are given in
\eqref{U5}--\eqref{vfifthres}.  Now
$(2z^2-4z+38)F_5-(20-2z)G_5=2(z+1)(z+2)^2\Delta_0$.
Thus $\Delta_0=0$, and then $F_5=G_5=0$ forces $m=3$.  But
$\Delta_3=\Delta_0-3(m-3)$, contradicting $\Delta_3\ne0$ in \eqref{mregxicandidate}.  Hence
$\xi=0$, proving \textup{(i)}.

Now suppose $t=0$ and $\sigma\ne0$.  Equation~\eqref{mregfourthzero}
gives $\Psi_4=0$.  For $b=3,4$ one has respectively
$\Psi_4=(z-1)(z+2)$ and $\Psi_4=z(z+1)$, both nonzero in the admissible
range.  Hence $b\ge5$ and $p\ge17$.  Combining $\Psi_4=0$ with
$z=m+2b-3$ gives, in $\F$, $b=4-z(z+1)/2$ and
$m=z^2+2z-5$, and therefore
\[
 \Delta_2=4-3z,
 \qquad
 \Delta_3=-2(z-1)(z+3)\ne0.
\]
The third-level system then gives $\mathbf x_3=0$.  The fourth-level
system determines a state of the form
$\mathbf x_4=\sigma^2\mathbf d_4$, with the vector $\mathbf d_4$ defined in
\eqref{even_fourth_state}--\eqref{even_fourth_coefficients} of
Appendix~\ref{app:even}.  Since all nonzero preceding indices are even,
Corollary~\ref{supportpropagation} gives $\mathbf x_5=0$.
For $b\ge6$, evaluate $\mathbf v_6$ and test the sixth-level matrix
system; for $b=5$, use the actual-degree equations with $u_6=0$.
The residual calculation in Appendix~\ref{app:even} gives
$K_6=z^2-5z-26=0$ and $S_6=57z+182=0$.  These are the conditions \eqref{vsixthres}, obtained from the normalized
residuals \eqref{even_residual_polynomials}.  But
$57^2K_6-(57z-467)S_6=520$, which is nonzero in $\F$ because $p\ge17$.  This proves \textup{(ii)}.

Finally assume $t=\sigma=0$.  If $u_3=0$, then the third-level system
and the nonzero $c_3,h_3$ minor give $\mathbf x_3=0$.  Hence all
coefficient states through degree $b$ vanish by
Corollary~\ref{uniquecontinuation}, and $F_B=X^b$, contradicting the
distinctness of its roots.  Thus put $v=u_3\ne0$.  The third-level
compatibility \eqref{mregthird} gives $\Delta_3=0$.

For $b=3$, this is impossible because $\Delta_3=z(z-1)\ne0$.  If
$b\ge5$, Corollary~\ref{supportpropagation} gives
$\mathbf x_4=\mathbf x_5=0$.  When $b=5$ this already yields
$F_B=X^5+vX^2=X^2(X^3+v)$, which has a repeated root.  For $b\ge6$, the sixth-level compatibility
calculation in Appendix~\ref{app:purethird}, specifically
\eqref{purethirdprofile}--\eqref{vK6}, reduces to
\[
 \widetilde\kappa_6v^2=0,
 \qquad
 \widetilde\kappa_6=
 \frac{27(z+6)^2m(m^2-1)}
 {(z-6)(z-4)^2(z-3)b^2(b+1)^2(b+3)}.
\]
On $\Delta_3=0$ every factor in this quotient is a unit in the
large-gap range, so this is impossible.

It remains to take $b=4$.  Here $\Delta_3=m^2+9m+26=0$ and the
fourth state is zero.  The historical right-hand sides in the actual
degree-$4$ equations at level five vanish; the two independent
$c_5,h_5$ columns give $c_5=h_5=0$.  Appendix~\ref{app:quartic}
then computes the actual-degree sixth-level system
\eqref{quartic_sixth_table}, whose residuals \eqref{quarticsix} include
$-351v^2/3200=0$.  Since $p\ge13$ and $v\ne0$, this forces $p=13$.  The large-gap bound
then gives $3\le m\le4$, and $\Delta_3=0$ excludes $m=3$, so $m=4$.
Substitution in the third- and sixth-level equations gives precisely
\eqref{quartictriple}.  This proves \textup{(iii)}.
\end{proof}

\begin{proposition}\label{coefficientrigidity}
Assume \eqref{rangealllarge}.  Let $F_B,F_{A^c},F_H$ be monic
polynomials that split over $\F$ with distinct roots, of degrees
$b,n,m$, respectively.  Suppose that they satisfy
\eqref{witnessidentity} and $u_1=0$.  Exactly one of the following
alternatives holds:
\begin{enumerate}[label=\textup{(\roman*)},nosep]
\item $t\ne0$, $\xi=0$, and
$\mathbf x_j=\mathbf x_j^0$ for $1\le j\le b$.
In particular, $F_B=F_{B^0}$ for \eqref{referenceprogressions}.
\item $t=\sigma=0$, $p=13$, $b=m=4$, and, for some $v\ne0$,
\[
 F_B=X^4+vX,\qquad F_H=Y^4+vY,\qquad
 F_{A^c}=Z^6+7vZ^3+5v^2.
\]
\end{enumerate}
\end{proposition}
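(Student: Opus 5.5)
The proof assembles Lemma~\ref{branchscreening} with the unique-continuation results of Subsection~\ref{recurrence}. The hypotheses here are exactly those of Lemma~\ref{branchscreening}. Assumption \eqref{rangealllarge}, identity \eqref{witnessidentity} and $u_1=0$ hold, and the three root polynomials split with distinct roots. The degree conventions \eqref{actual_degree_conditions} hold because the degrees are $b,n,m$. The low-order data of any solution fall into exactly one of three disjoint branches: $t\ne0$; $t=0,\ \sigma\ne0$; and $t=\sigma=0$. Part~(ii) of the lemma excludes the middle branch, so it suffices to treat the other two. Since the two surviving alternatives are separated by whether $t\ne0$ or $t=0$, at most one alternative can hold, which gives the ``exactly one'' assertion.

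\emph{Branch $t\ne0$.} Lemma~\ref{branchscreening}(i) gives $\xi=0$. By \eqref{mregxi} this means $\sigma=0$ and $u_3=0$. Then \eqref{h1formula} and \eqref{mregsecond} give $\mathbf x_1=\mathbf x_1^0$ and $\mathbf x_2=\mathbf x_2^0$. We must also check $\mathbf x_3=\mathbf x_3^0$. If $\Delta_3\ne0$, then $\mathsf M_3$ is invertible, and the third state is uniquely determined by the history, which agrees with the reference history. If $\Delta_3=0$, the first two rows determine $(c_3,h_3)$ once $u_3$ is prescribed. Since $u_3=0=u_3^0$, again $\mathbf x_3=\mathbf x_3^0$.

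The reference triple $(A^0,B^0,H^0)$ of \eqref{referenceprogressions} is itself a critical pair in the large-gap interior, centered because $\sum_{x\in B^0}x=0$. By Lemma~\ref{gridlemma} it satisfies \eqref{witnessidentity}, so Proposition~\ref{largegaprecursion} applies to its states as well: $\mathsf M_j\mathbf x_j^0=\mathbf v_j^0$. We then induct on $4\le j\le b$. If $\mathbf x_s=\mathbf x_s^0$ for all $s<j$, then $\mathbf v_j=\mathbf v_j^0$, because $\mathbf v_j$ is an explicit function of the preceding states. Corollary~\ref{uniquecontinuation} then forces $\mathbf x_j=\mathbf x_j^0$. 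This gives $u_s=u_s^0$ for all $s\le b$, so $F_B=F_{B^0}$.

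\emph{Branch $t=\sigma=0$.} Lemma~\ref{branchscreening}(iii) gives precisely the conclusions $p=13$, $b=m=4$ together with \eqref{quartictriple}, which is alternative~(ii).

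\emph{Main obstacle.} The only delicate point is the induction in the first branch. Corollary~\ref{uniquecontinuation} applies only to solutions of the level-$j$ system, so we need that the reference data satisfy the same recursion with the same parameters $m,n,b$. This is where we use the criticality of $(A^0,B^0)$: it holds because $a+b\le p-1$, the reference sets have sizes $a,b$, and they have the same $t$. Lemma~\ref{gridlemma} then converts that criticality into the witness identity needed for Proposition~\ref{largegaprecursion}. The rank-two case $\Delta_3=0$ at level three also needs the separate treatment given above, since $\mathsf M_3$ is not injective there.
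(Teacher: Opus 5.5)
Your proposal is correct and follows essentially the same route as the paper. Lemma~\ref{branchscreening} isolates the two surviving branches, the first three levels give $\mathbf x_s=\mathbf x_s^0$ for $s\le3$ once $\xi=0$, Corollary~\ref{uniquecontinuation} propagates agreement with the reference states up to level $b$, and disjointness is read off from $t$. Your explicit checks spell out what the paper leaves implicit: that the reference triple satisfies the same recursion (via its criticality and Lemma~\ref{gridlemma}), and that on $\Delta_3=0$ one still gets $\mathbf x_3=\mathbf x_3^0$ because $u_3=u_3^0=0$.
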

\begin{proof}
Lemma~\ref{branchscreening} leaves only the two alternatives in
the statement.  If $t\ne0$, then $\xi=0$.  The formulas from the first
three levels therefore give
\[
 \mathbf x_s=\mathbf x_s^0\qquad(1\le s\le3).
\]
Suppose, for contradiction, that $j\le b$ is the first index for which
$\mathbf x_j\ne\mathbf x_j^0$.  Then $j\ge4$, and the preceding states
of the two solutions agree.  Hence Proposition~\ref{largegaprecursion}
gives the same vector $\mathbf v_j$ for both, while
Corollary~\ref{uniquecontinuation} says that
$\mathsf M_j\mathbf x_j=\mathbf v_j$ has at most one solution.  This is
a contradiction.  Thus
$\mathbf x_j=\mathbf x_j^0$ for all $1\le j\le b$, proving
\textup{(i)}.

If $t=0$, Lemma~\ref{branchscreening}(ii) excludes
$\sigma\ne0$, while part~\textup{(iii)} gives exactly the quartic
characteristic-$13$ alternative in \textup{(ii)}.  The two alternatives
have different values of $t$, so they are disjoint.
\end{proof}

This completes the coefficient part of the classification.
Subsection~\ref{largegapinterior} next recovers the set pair from the
identified polynomial $F_B$: it uses the arithmetic-progression inverse
theorem in the reference case and the original reflection constraints
in the exceptional case.  Appendix~\ref{app:finitechecks} contains the explicit finite
compatibility calculations used above.

\section{Classification of critical pairs}\label{classification}

We now prove Theorem~\ref{main}.  Let $(A,B)$ be a critical pair with
$a\ge b$, and retain $m=|\F\setminus(A\rs B)|$.  For critical pairs,
$m\ge3$, $m=2$, $m=1$, and $m=0$ correspond respectively to
$a+b\le p-1$, $a+b=p$, $a+b=p+1$, and $a+b\ge p+2$.
We first settle the large-gap interior by the coefficient theory of
Section~\ref{structure}.  We then treat the remaining unequal-cardinality
interior in two stages: first the small summands $b=1,2$, and then the
small gaps $a-b\in\{1,2\}$ for $b\ge3$.  The latter is reduced to a single
key containment lemma.  The final subsection collects the equal-cardinality
case, the two boundary layers, and saturation, citing existing inverse
theorems whenever they cover the required range.

\subsection{The large-gap interior case}\label{largegapinterior}

Assume $m\ge3$, $b\ge3$ and $a\ge b+3$.
Proposition~\ref{coefficientrigidity} has already identified $F_B$:
it is either the root polynomial of the centered reference progression
or the exceptional quartic in characteristic $13$.  In the first case,
the following known inverse result recovers $A$; in the second, the
reflection centers determine $A^c$ directly.

\begin{lemma}\label{allAPbridge}
Suppose $b\ge3$, $a\ge b+3$, $a+b\le p-1$, and
$|A\rs B|=a+b-2$.  If $B$ is an arithmetic progression, then $(A,B)$
is a common affine image of $([0,a-1],[0,b-1])$.
\end{lemma}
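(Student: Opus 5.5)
The quickest route is to reduce to the inverse theorem of Liu and Qian \cite[Theorem 1.5(ii)]{LQ}. That result assumes $b\ge3$, $a\ge b+3$, $a+b\le p$, that one summand is an arithmetic progression, and that $|A\rs B|=a+b-2$; its conclusion is that the pair consists of progressions with the same difference and a common endpoint. Our hypotheses are a special case, since $a+b\le p-1$.

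\textbf{Step 1: normalize and match hypotheses.} Apply a common affine map $(A,B)\mapsto(uA+v,uB+v)$ so that $B=[0,b-1]$. This preserves criticality and the sizes. The only thing to verify is that the common difference and the endpoint in the cited conclusion agree with our normalization. Liu and Qian state their conclusion as initial or terminal segments of a common progression. As noted after \eqref{critical}, the terminal case becomes $([0,a-1],[0,b-1])$ after reversing the common difference, i.e.\ applying $x\mapsto -x+(b-1)$. This map is itself a common affine transformation, so it keeps $B=[0,b-1]$ as a set. Hence $(A,B)$ is a common affine image of $([0,a-1],[0,b-1])$.

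\textbf{Step 2 (self-contained alternative): structure of $2H$.} To avoid relying on the citation, I would argue directly from the reflection implication of Subsection~\ref{reflectionclosure}. Put $H=(\F\setminus(A\rs B))/2$, with $|H|=m=p-a-b+2\ge3$. For each $h\in H$,
\[
 (2h-B)\setminus\{h\}\subseteq A^c .
\]
Taking the union over $h\in H$ gives
\[
 (2H-B)\setminus H\subseteq A^c,\qquad\text{so}\qquad |2H-B|\le n+|H\cap A|.
\]
Here each $h\in H\cap A$ can lie in $2h-B$ only as the centre point $h$. Combined with Cauchy--Davenport, $|2H-B|\ge m+b-1$, this leaves a deficit of at most $|H\cap A|-1$ relative to the extremal value.

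I would first show $|H\cap A|\le1$. Suppose $h,h'\in H\cap A$ with $h\ne h'$. Then $h$ and $h'$ both lie in $B$. The reflection $2h-h'$ then lies in $2h-B$ and differs from $h$, so it lies in $A^c$; symmetrically for $h'$. Pushing such relations further should contradict the size count. With $|H\cap A|\le1$ in hand, $2H-B$ has the Cauchy--Davenport size $m+b-1\le p-2$. Since $B$ is an interval with $b\ge2$, Vosper's theorem forces $2H$ to be an interval of difference $1$.

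\textbf{Step 3: recover $A$.} The containment above then shows that $A^c$ equals the interval $2H-B$ with exactly one point removed or adjusted. Hence $A$ is an interval of length $a$. Criticality, together with the position of the unique diagonal-only sum, forces the two intervals to share an endpoint.

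\textbf{Main obstacle.} The delicate step is the bookkeeping of the centre points $h\in H\cap A$ in Step 2. Those points are exactly where the diagonal exclusion lets $A$ meet $2h-B$, and they are what could spoil the reduction to Vosper. This is the reason I would rely primarily on the citation in Step 1 and treat Steps 2 and 3 only as a consistency check.
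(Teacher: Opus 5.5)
Your Step~1 is exactly the paper's proof, so the proposal is correct as it stands: the hypotheses of \cite[Theorem 1.5(ii)]{LQ} follow from $b\ge3$, $a\ge b+3$, $a+b\le p-1$ and the progression assumption on $B$, and a simultaneous reversal turns the terminal-segment conclusion into $([0,a-1],[0,b-1])$. Drop Steps~2--3 rather than keeping them as a consistency check, because the claim $|H\cap A|\le1$ and the inference $h\in H\cap A\Rightarrow h\in B$ already fail for the target pair: for $p=11$, $A=[0,5]$, $B=[0,2]$ one gets $H=\{0,4,5,10\}$ and $H\cap A=\{0,4,5\}$ with $4,5\notin B$; what your count actually needs is $|H\cap A\cap B|\le1$, since a point of $H\cap A$ can lie in $2H-B$ only as its own reflection centre.
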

\begin{proof}
Apply \cite[Theorem 1.5(ii)]{LQ} with $k=a$ and $l=b$.
Its hypotheses are $b\ge3$, $a\ge b+1$, and
$|A\rs B|=a+b-2\le p-2$, together with the progression assumption
on $B$; all follow from our stronger hypotheses.  Its conclusion gives
a common difference, with $B$ occupying the first or last $b$ positions
of $A$.
Translation and dilation give the first normal form, and a simultaneous
reversal changes the terminal-segment form into the same form.
\end{proof}

\begin{proposition}\label{lg}
Let $A,B\subseteq\F$ have cardinalities $a,b$ satisfying
\[
 b\ge3,\qquad a\ge b+3,\qquad a+b\le p-1.
\]
Then $(A,B)$ is critical if and only if, up to common affine equivalence,
it is one of
\[
 ([0,a-1],[0,b-1]),\qquad (A_*,B_*),
\]
where $(A_*,B_*)$ is the representative in \eqref{exception}.
\end{proposition}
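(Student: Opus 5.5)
Both directions have to be handled.

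\emph{Sufficiency.} For the progression pair this was checked just after \eqref{referenceprogressions}. Every sum in $[1,a+b-3]$ has a representation with unequal summands. The sum $0$ is only $0+0$, so it is excluded. There is no wraparound because $a+b\le p-1$. Hence $|A\rs B|=a+b-2$.

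For $(A_*,B_*)$ in $\mathbb F_{13}$ we compute $A_*\rs B_*$ directly and check that it has exactly $9=7+4-2$ elements. Equivalently, we exhibit four missing sums $2h$ and check the reflection condition $2h-x\in A_*^c$ for $x\in B_*$, $x\ne h$; then Lemma~\ref{reconstruction} applies. Both properties are preserved under common affine maps.

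\emph{Necessity.} Let $(A,B)$ be critical. Then $m=p-a-b+2$ satisfies \eqref{rangealllarge}. Translate so that $\sum_{x\in B}x=0$. Lemma~\ref{gridlemma} gives the witness identity, and the three root polynomials split with distinct roots of degrees $b,n,m$. Proposition~\ref{coefficientrigidity} then leaves two cases.

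In case (i), $F_B=F_{B^0}$, so $B=B^0$ is an arithmetic progression with difference $\delta\ne0$ (here $t\ne0$). Lemma~\ref{allAPbridge} then gives the first normal form.

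In case (ii), $p=13$, $b=m=4$, $a=7$, and $F_B=X^4+vX$. Dilating by a common $u$ scales $v$ by $u^3$. The cubes in $\mathbb F_{13}^*$ form a subgroup of index $3$, so we reduce to one representative $v$ from each cube class and determine $B$ as the root set of $X(X^3+v)$. This set is nonempty with $4$ distinct roots exactly when $-v$ is a cube. The same holds for $H$, which has the same polynomial, so $H=B$.

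Next, determine $A^c$ as the six roots of $Z^6+7vZ^3+5v^2$. Alternatively, use the reflection constraints: $A^c\supseteq\{2h-x: x\in B,\ h\in H,\ x\ne h\}$, and since $|A^c|=6$, this reflection set must itself have size $6$ and equal $A^c$. Taking $B=\{0\}\cup\{\text{cube roots of } -v\}$ gives a concrete finite computation. After the common translation and dilation, the result should match $(A_*,B_*)$, with $B_*=\{0,1,4,6\}$ recentred.

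\emph{Main obstacle.} The delicate step is the final identification in case (ii). One must check that, among $v$ for which $X^4+vX$ splits, the resulting pair lies in the single affine orbit of $(A_*,B_*)$. One must also undo the centering translation correctly: $\sum B_*=11\neq0$, so $B_*$ is translated by $-11\cdot 4^{-1}$ to get zero sum.

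I would normalise by dilation so that $-v=1$. Then $B=\{0,1,3,9\}$, since $\{1,3,9\}$ are the cube roots of $1$ in $\mathbb F_{13}$. Computing the reflections of $B$ about $H=B$ gives $A^c$, and hence $A$. Finally, I would exhibit an explicit affine map sending this $B$ to $B_*$. Such a map must carry a zero-sum $4$-set to $B_*$, which is consistent because $B_*$ translated by $-11/4$ is zero-sum. I would then check that the same map carries $A$ to $A_*$.
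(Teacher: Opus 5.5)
Your proposal is correct and follows the paper's proof essentially step for step. You center $B$, obtain the witness identity from Lemma~\ref{gridlemma}, apply Proposition~\ref{coefficientrigidity}, and use Lemma~\ref{allAPbridge} in alternative (i). In alternative (ii) you dilate to $F_B=X^4-X$, so $B=H=\{0,1,3,9\}$, the reflection set $\{2,4,5,6,10,12\}$ fills $A^c$, and $x\mapsto 2x+1$ carries $(A_*,B_*)$ to the resulting pair. The centering translation you flag as the main obstacle is no real difficulty, because common translations are part of the affine equivalence.

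One small slip in the sufficiency check: the restricted sums of the interval pair fill $[1,a+b-2]$, not $[1,a+b-3]$. The top sum $(a-1)+(b-1)$ is needed to reach the count $a+b-2$.
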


\begin{proof}
Assume first that $(A,B)$ is critical.  Center $B$ and let $H$ be the
halves of the missing restricted sums.  The basic reflection implication
of Subsection~\ref{reflectionclosure} gives
$2h-x\in A^c$ for $x\in B$, $h\in H$, $x\ne h$.  After the common
translation used to center $B$, the same reflection condition is
preserved.  Forming the three root polynomials, Lemma~\ref{gridlemma}
therefore gives \eqref{witnessidentity}, and
Proposition~\ref{coefficientrigidity} applies.

In alternative \textup{(i)}, the proposition gives $F_B=F_{B^0}$.
Since the roots are distinct, $B=B^0$ is an arithmetic progression.
Lemma~\ref{allAPbridge} therefore gives the common-endpoint pair.
No further coefficient comparison is required: the matrix recursion
and branch elimination were completed in
Proposition~\ref{coefficientrigidity}.

\emph{Recovery of the exceptional branch.}
In the second alternative, $p=13$ and $b=m=4$, hence $a=7$.  Choose a
nonzero root $r$ of $X^4+vX$.  Then $r^3=-v$, and the common dilation
by $r^{-1}$ transforms the monic root polynomial of $B$ into $X^4-X$;
therefore $B=\{0,1,3,9\}$.  Alternative \textup{(ii)} of Proposition~\ref{coefficientrigidity}
also gives $F_H(Y)=Y^4+vY$ before dilation.  Hence the same normalization
gives $H=B$.
The reflection constraints now force
\[
 \{2h-x:h,x\in B,\ x\ne h\}
       =\{2,4,5,6,10,12\}\subseteq A^c.
\]
Since $|A^c|=6$, equality holds, and hence
$A=\{0,1,3,7,8,9,11\}$.  The map $x\mapsto2x+1$ sends the representative in \eqref{exception}
to this pair.  Thus the exceptional polynomial determines exactly the orbit
$\mathcal E$; the reflection constraints recover $A^c$ directly.

For sufficiency, let $A=[0,a-1]$ and $B=[0,b-1]$.
For $1\le s\le a-1$, use the representation $s=s+0$.
For $a\le s\le a+b-2$, use
\[
 s=(a-1)+(s-a+1),\qquad 1\le s-a+1\le b-1<a-1.
\]
Both representations have unequal summands.  The only representation of $0$ is excluded,
and no sums outside the displayed interval occur.  As $a+b\le p-1$, this gives
$A\rs B=[1,a+b-2]$ without wraparound.  For the displayed exceptional
pair, direct addition gives
\[
 A\rs B=\{1,3,4,7,8,9,10,11,12\},
\]
of size $9=7+4-2$.  Finally, common affine transformations preserve
criticality, proving both directions.
\end{proof}

The progression alternative and the exceptional orbit have thus been
established without classifying all auxiliary polynomials.  Outside
these two alternatives, \eqref{anr} is strict and gives
$|A\rs B|\ge a+b-1$.  The exceptional alternative is absent whenever
$p\ge17$ or $a+b\le p-3$.

\subsection{Small summands and small gaps}\label{smallgapinterior}

The unequal-cardinality interior not covered by Proposition~\ref{lg}
consists of the small summands $b=1,2$ and, for $b\ge3$, the two gaps
$a-b=1,2$.  We handle the small summands first because the case $b=2$
arises when a small-gap pair with $B\nsubseteq A$ is compressed.

\begin{proposition}\label{smallcard}
Suppose $a+b\le p-1$, $a>b$, and $b\in\{1,2\}$.  If $b=1$, then
$(A,B)$ is critical if and only if, up to common affine equivalence,
$B=\{0\}$ and $0\in A$.  If $b=2$, then $(A,B)$ is critical if and
only if, up to common affine equivalence, $B=\{0,1\}$ and
\[
 A=[0,a-1]\quad\text{or}\quad A=[2-a,1],
\]
or
\[
 A=\{0,1\}\cup[\ell,\ell+a-3],
 \qquad 3\le\ell,\quad \ell+a-3\le p-2.
\]
\end{proposition}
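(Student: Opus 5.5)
Both cardinalities are small enough to work directly with the sets, so I plan an elementary argument rather than the coefficient machinery of Section~\ref{structure}.

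\emph{Case $b=1$.} Write $B=\{v\}$. Then $A\rs B=(A\setminus\{v\})+v$, whose size is $a-1$ if $v\in A$ and $a$ otherwise. Since $a+b\le p-1$, criticality means $|A\rs B|=a-1$, which holds exactly when $v\in A$. Translating by $-v$ gives the normal form $B=\{0\}$, $0\in A$.

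\emph{Case $b=2$.} Apply a common affine map to get $B=\{0,1\}$. Then
\[
 A\rs B=\bigl((A\setminus\{0\})+0\bigr)\cup\bigl((A\setminus\{1\})+1\bigr)=A_0\cup(A_1+1),
\]
and criticality requires this set to have size exactly $a$. I would split according to $|A\cap\{0,1\}|$.
\begin{itemize}
\item If $0,1\notin A$, both pieces are $A$ and $A+1$, of size $a$ each, so $A+1=A$. This is impossible since $a<p$.
\item If exactly one of $0,1$ lies in $A$, the two pieces have sizes $a-1$ and $a$. The union has size $a$ exactly when the smaller piece is contained in the larger. Say $0\in A$ and $1\notin A$. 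Then $A\setminus\{0\}\subseteq A+1$, i.e.\ $x\in A$, $x\ne0$ implies $x-1\in A$. Walking down from any element of $A$ then shows $A$ is an interval ending at $0$, namely $[1-a,0]$ with $1\notin A$. Reflecting via $x\mapsto1-x$ (which fixes $B$) turns this into $A=[1,a]$. Checking it against the table, I expect this form to fall into the listed families after a suitable affine map, though I am not yet sure which; I will verify this directly. The case $1\in A$, $0\notin A$ is symmetric.
\item If $0,1\in A$, both pieces have size $a-1$, so we need
\[
 \bigl|(A\setminus\{0\})\cup\bigl((A\setminus\{1\})+1\bigr)\bigr|=a,
\]
i.e.\ the two pieces share exactly $a-2$ elements. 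Put $A'=A\setminus\{0,1\}$, so $|A'|=a-2$. The first piece is $\{1\}\cup A'$ and the second is $\{1\}\cup(A'+1)$. Their intersection $\{1\}\cup(A'\cap(A'+1))$ must have size $a-2$, hence $|A'\cap(A'+1)|=a-3$, i.e.\ $|A'+\{0,1\}|=|A'|+1$. This forces $A'$ to be a single interval $[\ell,\ell+a-3]$ (Cauchy--Davenport equality with a two-element set, directly), contained in $\F\setminus\{0,1\}$. The positions of $\ell$ then give three cases: $A'$ adjacent above $1$ gives $A=[0,a-1]$; $A'$ adjacent below $0$ gives $A=[2-a,1]$; otherwise $3\le\ell$ and $\ell+a-3\le p-2$. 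A gap at one end only ($\ell=2$, or $\ell+a-3=p-1$) is already covered by the adjacent forms.
\end{itemize}

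I also need to reconcile the mixed-membership forms, such as $A=[1,a]$ with $B=\{0,1\}$, against the table; I expect this reconciliation to be the main obstacle. My first attempt is to recompute $A\rs B$ for $A=[1,a]$: here $1\in A$, so this is really the case $1\in A$, $0\notin A$, and the earlier containment analysis must be redone carefully rather than by symmetry. Recomputing, I expect the pieces $[1,a]+0$ and $[2,a]+1=[3,a+1]$ to have union $[1,a+1]$ of size $a+1$, so this form is not critical. My bookkeeping above is therefore off, and I will redo both single-membership subcases directly, expecting them to yield no critical pairs or only the listed forms.

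For sufficiency, I would compute $A\rs B$ for each listed form:
\begin{itemize}
\item $[0,a-1]$ gives $[1,a]$;
\item $[2-a,1]$ gives $[2-a,1]\setminus\{\cdot\}$, of size $a$;
\item the gapped form gives $\{1\}\cup[\ell,\ell+a-2]$, of size $a$, provided $\ell+a-2\le p-1$ with no collision with $1$, which is exactly why $\ell+a-3\le p-2$ and $\ell\ge3$ are needed.
\end{itemize}
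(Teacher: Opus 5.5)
Your overall route is sound. You normalize $B=\{0,1\}$, write $A\rs B=(A\setminus\{0\})\cup\bigl((A\setminus\{1\})+1\bigr)$, and split on $|A\cap\{0,1\}|$. Your handling of $b=1$, of $0,1\notin A$, and of $0,1\in A$ is correct; in the last case you reduce to $|A'\cup(A'+1)|=|A'|+1$, so $A'=[\ell,\ell+a-3]$ is a single run avoiding $0,1$.

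\textbf{The gap.} The gap is the single-membership case. You first resolve it incorrectly and then leave it open (``I will redo both single-membership subcases''), so necessity is not actually proved. For $0\in A$, $1\notin A$ you correctly derive $A\setminus\{0\}\subseteq A+1$, i.e.\ every nonzero $x\in A$ has $x-1\in A$. This does not give $[1-a,0]$. Lift a nonzero $x\in A$ to $[1,p-1]$ and descend: $x,x-1,\ldots,1$ are all nonzero, so each forces the next into $A$. You therefore reach $1\in A$ before $0$, contradicting $1\notin A$. Hence $A=\{0\}$, which is impossible since $a\ge3$.

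The map $x\mapsto1-x$ fixes $B$ and exchanges the two single-membership cases. So your first appeal to symmetry was legitimate; the only error was the direction of the walk. Thus this case contributes no critical pairs, and every critical pair has $\{0,1\}\subseteq A$. Your third case then completes necessity, consistent with your recomputation showing $[1,a]$ is not critical.

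\textbf{A small correction in sufficiency.} The second item should read $A\rs B=[2-a,1]$ exactly: the sum $0=(-1)+1$ survives, and only $2$ is lost.

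\textbf{Comparison with the paper.} The paper avoids the case split. With $\varrho$ the number of cyclic runs of $A$, it uses $|A\cup(A+1)|=a+\varrho$. It then observes that restriction can delete only $0$ (iff $0\in A$, $-1\notin A$) and $2$ (iff $1\in A$, $2\notin A$). Criticality becomes the single identity $\varrho=\varepsilon_0+\varepsilon_2$, which yields necessity and the converse at once. Your repaired split is equally elementary, and it makes explicit that criticality forces $B\subsetneq A$ when $b=2$.
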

\begin{proof}
If $B=\{v\}$, then $A\rs B$ is obtained from the translate $A+v$ by
deleting $2v$ exactly when $v\in A$.  Hence
$|A\rs B|=a-1$ if and only if $v\in A$, which is precisely $B\subsetneq A$.  Translating by $-v$ gives the
stated affine normal form $B=\{0\}$ with $0\in A$.

Now let $B=\{0,1\}$, and let $\varrho$ be the number of maximal
consecutive blocks of the proper set $A$ in the cyclic order with
step one.  We call these blocks runs.  Then
$|A\cup(A+1)|=a+\varrho$.  Restriction can remove only $0$ and $2$,
with respective indicators
\[
 \varepsilon_0=\ind_{\{0\in A,\;-1\notin A\}},\qquad
 \varepsilon_2=\ind_{\{1\in A,\;2\notin A\}}.
\]
Thus criticality is equivalent to $\varrho=\varepsilon_0+\varepsilon_2$.
Since $A$ is nonempty and proper, $\varrho\ge1$, while the right-hand
side is at most $2$.  If $\varrho=1$, exactly one indicator is one, so
the unique run starts at $0$ or ends at $1$, giving the first two forms.
If $\varrho=2$, both indicators are one; one run is then $\{0,1\}$ and
the other is $[\ell,\ell+a-3]$ with the displayed bounds.  The converse
follows from the same identity.
\end{proof}

We now assume $a+b\le p-1$, $b\ge3$, and $a-b\in\{1,2\}$.
Once $B\subsetneq A$ is known, the deletion patterns are already covered
for $b\ge4$ by Liu and Qian.  The only new structural point is therefore
the following containment statement.

\begin{lemma}\label{containment}
Suppose $(A,B)$ is critical, $a+b\le p-1$, $b\ge3$, and
$a-b\in\{1,2\}$.  Then $B\subsetneq A$.
\end{lemma}
\begin{proof}
Suppose instead that $B\nsubseteq A$.  Put
$ I=A\cap B$, $U=A\cup B$, $D_A=A\setminus B$, and
$D_B=B\setminus A$, and write $c=|I|$ and $N=|U|$.
Since $D_B\ne\varnothing$ and $a>b$, both $D_A$ and $D_B$ are nonempty.
Moreover
\begin{equation}\label{smallgapcardinality}
 N+c=a+b\le p-1,
 \qquad
 N-c=a-b+2|D_B|\ge3.
\end{equation}

If $c=0$, then the restriction removes no representation and
$A\rs B=A+B$.  Cauchy--Davenport gives
$|A\rs B|\ge a+b-1$, contradicting criticality.

Assume henceforth that $c\ge1$.  If $x\in U$ and $y\in I$ are distinct,
then $x+y$ is an admissible sum in $A\rs B$, with the order interchanged
when necessary.  Hence $U\rs I\subseteq A\rs B$.  Since $N>c$ by
\eqref{smallgapcardinality}, the mixed lower bound gives
$|U\rs I|\ge N+c-2=a+b-2=|A\rs B|$.  Therefore
\begin{equation}\label{smallgapcompression}
 U\rs I=A\rs B,
 \qquad
 D_A+D_B\subseteq U\rs I.
\end{equation}
In particular, if two distinct points of
$W:=U\setminus I=D_A\sqcup D_B$ have a sum outside $U\rs I$, then they must belong to the same part of
the partition $D_A\sqcup D_B$.  We shall use the missing sums of the
compressed pair to propagate this constraint through $W$.

Suppose first that $c=1$, say $I=\{v\}$.  The only ordinary sum that can
be removed by restriction is $2v$, so Cauchy--Davenport and criticality
give
\[
 |A+B|=a+b-1\le p-2,
 \qquad
 (A+B)\setminus(A\rs B)=\{2v\}.
\]
Vosper's theorem \cite{Vosper1,Vosper2} therefore makes $A$ and $B$
arithmetic progressions with a common difference.  Their only sums with
a unique ordered representation are the two endpoint sums.  Since the
unique lost sum is $2v$ with $v\in A\cap B$, the point $v$ must be a common initial or terminal point.  As $a>b$, this
forces $B\subsetneq A$, a contradiction.

Now let $c=2$.  Normalize $I=\{0,1\}$.  By
\eqref{smallgapcompression}, $(U,I)$ is critical, and
Proposition~\ref{smallcard} gives its three possible forms.  The second
interval form is carried to the first by the common reflection
$x\mapsto1-x$, so first take
\[
 U=[0,N-1].
\]
Then
\[
 U\rs I=[1,N],\qquad W=[2,N-1].
\]
Put $L=N-2\ge3$ and write the vertices of $W$ as $2+i$,
$0\le i<L$.  The two sums $N+1,N+2$ are missing, and on the index set
$[0,L-1]$ they correspond to the pair sums $L-1,L$.  The chain
\[
 0,\ L-1,\ 1,\ L-2,\ 2,\ L-3,\ldots
\]
visits every vertex.  Consecutive vertices therefore have a missing sum
and hence, by \eqref{smallgapcompression}, must lie in the same one of
$D_A,D_B$.  Thus all of $W$ lies in one part, contradicting the
nonemptiness of both parts.

It remains in the case $c=2$ to consider
\[
 U=\{0,1\}\cup[\ell,r],
 \qquad 3\le\ell\le r\le p-2.
\]
Here $W=[\ell,r]$, $L=r-\ell+1\ge3$, and
\[
 U\rs I=\{1\}\cup[\ell,r+1].
\]
Depending on whether $\ell+r\le p$, $\ell+r=p+1$, or
$\ell+r\ge p+2$, choose the following two integers, whose residues
are missing sums:
\[
\begin{array}{c|c|c}
\text{condition}&\text{missing sums}&\text{index sums on }[0,L-1]\\ \hline
\ell+r\le p&\ell+r-1,\ \ell+r&L-2,\ L-1\\
\ell+r=p+1&p,\ p+2&L-2,\ L\\
\ell+r\ge p+2&\ell+r,\ \ell+r+1&L-1,\ L.
\end{array}
\]
In the first row the chosen sums lie beyond $r+1$ and at most $p$; in
the middle row their residues are $0,2$; and in the third row their
residues lie below $\ell$.  Thus all six displayed sums are indeed
outside $U\rs I$.  For the first and third rows, the two reflections
generate a spanning chain obtained from
\[
 0,\ L-1,\ 1,\ L-2,\ldots
\]
by reversing the indices when necessary.  In the middle row
$L=p+2-2\ell$ is odd, and
\[
 0,\ L-2,\ 2,\ L-4,\ 4,\ldots,\ L-1
\]
is a spanning chain whose consecutive sums alternate between $L-2$ and
$L$.  Thus the missing-sum reflections again force all points of $W$ to
lie in the same part of $D_A\sqcup D_B$, a contradiction.

Finally suppose that $c\ge3$.  By \eqref{smallgapcardinality}, the
critical compressed pair $(U,I)$ lies in the large-gap range, so
Proposition~\ref{lg} applies.  In the progression alternative, after a
common affine normalization,
\[
 U=[0,N-1],\qquad I=[0,c-1].
\]
Then
\[
 U\rs I=[1,N+c-2],\qquad W=[c,N-1].
\]
With $L=N-c\ge3$, the two missing sums $N+c-1,N+c$ correspond after
translating $W$ by $-c$ to the index sums $L-1,L$.  The same spanning
chain as above forces all of $W$ into one of $D_A,D_B$, again impossible.

In the exceptional alternative, up to common affine equivalence
$(U,I)=(A_*,B_*)$.  Then $W=\{3,5,10\}$, and the three pair sums
\[
 3+5=8,\qquad 3+10=0,\qquad 5+10=2\pmod{13}
\]
are all missing from $U\rs I$.  Hence the three points of $W$ must all
lie in the same part of $D_A\sqcup D_B$, a final contradiction.
Thus $B\subsetneq A$.
\end{proof}

With containment established, the classification is immediate from the
known deletion theorem except at the two smallest endpoints.

\begin{proposition}\label{smallgapclassification}
Assume $a+b\le p-1$ and $b\ge3$.
\begin{enumerate}[label=\textup{(\roman*)},nosep]
\item If $a=b+1$, then $(A,B)$ is critical if and only if, up to common
affine equivalence, either
\[
 A=[0,b],\qquad B=A\setminus\{r\},\qquad 0\le r\le b,
\]
when $b\ge4$, or
\[
 A=2h-A,\qquad B=A\setminus\{v\},\qquad h\in\F,\ v\in A,
\]
when $b=3$.
\item If $a=b+2$, then $(A,B)$ is critical if and only if, up to common
affine equivalence,
\[
 A=[0,b+1],\qquad B=A\setminus T,
\]
where
\[
 T\in\bigl\{\{0,1\},\{0,2\},\{b-1,b+1\},\{b,b+1\}\bigr\}.
\]
\end{enumerate}
\end{proposition}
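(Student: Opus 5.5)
Necessity is reduced to the containment case via Lemma~\ref{containment}, and sufficiency is checked directly. So let $(A,B)$ be critical with $a+b\le p-1$, $b\ge3$ and $a-b\in\{1,2\}$. Lemma~\ref{containment} gives $B\subsetneq A$, and we write $B=A\setminus T$ with $|T|=a-b\in\{1,2\}$.

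\emph{Necessity for $b\ge4$.} Here I would invoke \cite[Proposition 3.4]{LQ}. It applies under exactly the hypotheses now in hand: $B\subsetneq A$ and $|A\rs B|=a+b-2\le p-3\le p-2$. Its conclusion gives $A$ as an arithmetic progression and $T$ as one of the listed deletion patterns. A common affine map normalizes the progression to $[0,a-1]$. A simultaneous reversal $x\mapsto a-1-x$ identifies the mirror patterns, which is why the list in (ii) is closed under $T\mapsto b+1-T$.

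\emph{Necessity for $b=3$, $a=4$.} This endpoint has to be argued directly, since the cited result needs $b\ge4$. Write $B=A\setminus\{v\}$. The restricted sumset $A\rs B$ consists of all $x+y$ with $x\ne y$, $x\in A$, $y\in A\setminus\{v\}$. This contains $A\rs A$, minus possibly sums that occur only as $v+v'$. Conversely, it equals $(A\rs A)$, since every pair $\{x,y\}$ with $x\ne y$ has at least one entry different from $v$ when ordered suitably. Hence $A\rs B=A\rs A$.

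Criticality then reads $|A\rs A|=5$. The six pair sums of a 4-set must therefore have exactly one coincidence $x_1+x_2=x_3+x_4$, with the two pairs disjoint. This says precisely that $A$ is centrally symmetric about $h=(x_1+x_2)/2$. Conversely, if $A=2h-A$ has four elements, the relation $x_1+x_2=2h$ with $x_1\ne x_2$ yields a coincidence. Since $p\ge11$ here, we also have $2h\notin A+A$ from a fixed point, because $|A|$ is even. So there are exactly $5$ distinct pair sums, and $|A\rs B|=5$ for every $v\in A$.

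\emph{Necessity for $b=3$, $a=5$.} Here I would argue through the compression idea used in Lemma~\ref{containment}. We have $|A\rs B|=6$ and $B=A\setminus\{s,t\}$. The sumset $A\rs B$ contains $A\rs A$ apart from the single pair sum $s+t$, when it has no other representation. So $|A\rs A|\le7=2\cdot5-3$, which is the Dias da Silva--Hamidoune minimum. Karolyi's inverse theorem \cite[Theorem 7]{Karolyi}, valid for $|A|\ge5$ and $7<p$, then makes $A$ an arithmetic progression $[0,4]$. Moreover $s+t$ must be a sum lost in $A\rs B$. This forces $s+t$ to be a sum with a unique representation in $A\rs A$, that is $s+t\in\{1,7\}$, giving $T=\{0,1\}$ or $\{3,4\}$.

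Alternatively, $s+t$ can be a sum whose other representations all involve the pair $\{s,t\}$ elementwise, which happens when $2s$ or $2t$ is the only other sum. This yields $\{0,2\}$ and $\{2,4\}$. I expect this enumeration of which pair sums can be lost to be the main obstacle, and I would finish it by direct inspection of $[0,4]$.

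\emph{Sufficiency.} This is a direct sumset computation on $[0,b+1]$ or $[0,b]$, in the style of the proof of Proposition~\ref{lg}. For $T=\{r\}$, one checks $A\rs B=[1,2b-1]$, with $0$ and $2b$ lost. For the four patterns with $|T|=2$, one checks that $A\rs B$ is an interval of length $2b$ missing the two extreme sums. In each case no wraparound occurs, because $a+b\le p-1$. Common affine maps preserve criticality, which completes both directions.
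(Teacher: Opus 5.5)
Your route is the paper's: containment from Lemma~\ref{containment}, \cite[Proposition 3.4]{LQ} for $b\ge4$, the pairing argument for $(a,b)=(4,3)$, and Erd\H{o}s--Heilbronn plus K\'arolyi's inverse theorem for $(5,3)$. That architecture is sound, but two verification steps are wrong as written.

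\textbf{The $(5,3)$ enumeration.} On $A=[0,4]$, the sums with a unique unordered representation by \emph{distinct} elements are $1,2,6,7$, not $1,7$. The second representations $1+1$ and $3+3$ of $2$ and $6$ are diagonal, so they never occur in a restricted sumset. Your patch (``other representations involve $\{s,t\}$ elementwise, i.e.\ $2s$ or $2t$'') is not a valid criterion: $s+t=2s$ is impossible for $s\ne t$, and the extra representation of $2=0+2$ is $1+1$, with $1\notin T$. The correct criterion is short. A pair $\{x,y\}\ne\{s,t\}$ of distinct elements with $x+y=s+t$ cannot share exactly one element with $\{s,t\}$. Hence it is disjoint from $T$, both of its elements lie in $B$, and it keeps $s+t$ in $A\rs B$. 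So $s+t$ is lost precisely when $T$ is its unique restricted representation, which yields the four pairs $\{0,1\},\{0,2\},\{2,4\},\{3,4\}$.

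\textbf{Sufficiency for $|T|=2$.} Your claim that $A\rs B$ is an interval is false, so the check you describe would fail. With $A=[0,b+1]$, the pattern $T=\{0,2\}$ gives $A\rs B=\{1\}\cup[3,2b+1]$, and $T=\{b-1,b+1\}$ gives $[1,2b-1]\cup\{2b+1\}$. The size $2b$ is nevertheless correct, and the criterion above proves it uniformly, as the paper does. First, $A\rs A=[1,2b+1]$ without wraparound. Every unordered pair of distinct elements of $A$ other than $T$ contains a point of $B$, so its sum survives. The sum of $T$ is one of $1,2,2b,2b+1$, each of which has a unique restricted representation in $[0,b+1]$. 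Hence exactly one sum is lost, and this also gives the converse at $b=3$.

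\textbf{A smaller point in the $(4,3)$ converse.} The remark ``$2h\notin A+A$'' is false as written, since $2h=x_1+x_2$. What you need is $h\notin A$, so that $A$ splits into two pairs summing to $2h$. You also need at most one coincidence among pair sums. This holds because two different partitions with equal pair sums would force $2(x_2-x_3)=0$; alternatively it follows directly from \eqref{anr}.
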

\begin{proof}
For necessity, Lemma~\ref{containment} gives $B\subsetneq A$.
When $b\ge4$, apply \cite[Proposition 3.4]{LQ} with $k=a$ and $l=b$.
Its hypotheses hold because $a-b\in\{1,2\}$ and
$|A\rs B|=a+b-2\le p-3$.  It follows that $A$ is an arithmetic
progression and $B$ has one of the displayed deletion forms.

For sufficiency in this range, normalize $A=[0,a-1]$.
Then $A\rs A=[1,2a-3]$, with no reduction modulo $p$ needed since
$2a-3<p$.  Deleting a single element to form $B$ leaves every
unordered pair of distinct elements represented in $A\rs B$, so
$|A\rs B|=2a-3=a+b-2$.  If $B=A\setminus T$ and $T$ is one of the
four displayed two-element sets, the only unordered pair no longer
represented is $T$ itself.  Its sum is one of
$1,2,2a-4,2a-3$, each of which has a unique unordered representation
by distinct elements of $A$.  Thus exactly one sum is lost and
$|A\rs B|=2a-4=a+b-2$.

It remains to treat $b=3$.  The four-element self-sum criterion used
in the first case is already noted in \cite[Section 1]{Karolyi}; we
recall its elementary proof.

If $(a,b)=(4,3)$, write $A\setminus B=\{v\}$.  Then
$A\rs B=A\rs A$.  Among the six unordered pairs of a four-set, two
pairs sharing a vertex cannot have the same sum, while two distinct
collisions between disjoint partitions would force two elements of
$A$ to coincide.  Hence $|A\rs A|=5$ exactly when one partition into two pairs
has equal pair sums, which is equivalent to central symmetry of
$A$.  Every three-subset then gives the required critical pair.

If $(a,b)=(5,3)$, write $A\setminus B=T=\{x,y\}$.  Then
$A\rs A=(A\rs B)\cup\{x+y\}$.  Criticality and the Erd\H{o}s--Heilbronn bound give
$|A\rs A|=7=2|A|-3$, so K\'arolyi's inverse theorem \cite[Theorem 7]{Karolyi}
makes $A$ a five-term arithmetic progression.  After normalization
$A=[0,4]$, the restricted sums with a unique unordered representation
are precisely those belonging to
\[
 \{0,1\},\quad\{0,2\},\quad\{2,4\},\quad\{3,4\}.
\]
Thus $T$ is one of these four pairs, and conversely each choice removes
exactly its unique sum.  This proves the remaining case.
\end{proof}

\subsection{The remaining cases}\label{remainingcases}

It remains to treat equal cardinalities and the three ranges at or above
the boundary.  We use existing results wherever they apply and supply
only the few cases not covered by them.

\begin{proposition}\label{equalprop}
Let $|A|=|B|=k\ge2$ and $p>2k-1$.  If $|A\rs B|\le2k-2$, then $A=B$.
Consequently, distinct sets in this range satisfy $|A\rs B|\ge2k-1$.
\end{proposition}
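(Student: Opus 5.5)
We split according to $k$: the range $k\ge5$ is covered by \cite[Theorem 7]{DGHM}, and for $2\le k\le4$ we give a direct argument, using the bound \eqref{anr} together with Cauchy--Davenport and Vosper.

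Suppose $A\ne B$. By \eqref{anr} we have $|A\rs B|\ge 2k-2$, so the hypothesis forces $|A\rs B|=2k-2$. For $k\ge5$ and $p>2k-1$, \cite[Theorem 7]{DGHM} states that equality forces $A=B$, a contradiction. The final sentence of the statement follows at once.

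Now let $2\le k\le4$, $A\ne B$, and $|A\rs B|=2k-2$. We argue by the size of $c=|A\cap B|$, as in Lemma~\ref{containment}.
\begin{enumerate}[label=\textup{(\alph*)},nosep]
\item If $c=0$, then $A\rs B=A+B$, and Cauchy--Davenport gives $|A\rs B|\ge 2k-1$, a contradiction.
\item If $c=1$, say $A\cap B=\{v\}$, then only the sum $2v$ can be lost. Hence $|A+B|=2k-1$, which is at most $p-2$ once $p\ge 2k+1$. For $k\ge2$, Vosper makes $A$ and $B$ progressions with a common difference $d$. The sum $2v$ must then be an endpoint sum, so $v$ is a common initial or common terminal point. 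Two progressions of the same length and difference with a common endpoint coincide, so $A=B$, a contradiction.
\item If $c\ge2$, compress to $U=A\cup B$ and $I=A\cap B$. Since $c<k$ we have $|U|>|I|$, and exactly as in \eqref{smallgapcompression}, $U\rs I\subseteq A\rs B$ and the mixed bound give $U\rs I=A\rs B$. The pair $(U,I)$ then has $|U|+|I|=2k\le p-1$ and $|U|-|I|=2(k-c)\ge2$. The points of $W=(A\setminus B)\sqcup(B\setminus A)$ cannot be joined by missing sums across the two parts, because $(A\setminus B)+(B\setminus A)\subseteq A\rs B$.
\end{enumerate}

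In case (c), since $k\le4$ we have $c\in\{2,3\}$. If $c=2$, Proposition~\ref{smallcard} gives the forms of $(U,I)$, and the chain arguments of Lemma~\ref{containment} put all of $W$ into one part. This contradicts the fact that both parts are nonempty, each having $k-c\ge1$ points. If $c=3$, then $k=4$, $|U|=5$ and $|I|=3$, so the gap is $2$. Proposition~\ref{smallgapclassification}(ii) makes $U=[0,4]$ and $I=U\setminus T$ with $T$ one of the four displayed pairs. Then $W=T$, and the sum of the pair $T$ is a missing sum of $U\rs I$, namely the lost unique sum. So the two points of $W$ must lie in the same part, again a contradiction.

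The main obstacle is the small-prime boundary in case (b). When $p=2k+1$, the size $|A+B|=2k-1=p-2$ is still within the scope of Vosper's theorem, so the argument goes through. The cases $k=2,3$ with tiny $p$ (for instance $p=5$ with $k=2$) should be checked directly, since Proposition~\ref{smallcard} requires $a+b\le p-1$. For $k=2$ one can argue by hand: if $B=\{0,1\}$ and $A\ne B$, then $A\rs B$ has at least $3$ elements.
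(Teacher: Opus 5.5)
Your proof is correct and follows the paper's route. It cites \cite[Theorem 7]{DGHM} for $k\ge5$, then splits on $c=|A\cap B|$: Cauchy--Davenport for $c=0$, Vosper for $c=1$, compression with Proposition~\ref{smallcard} and the chain argument for $c=2$, and Proposition~\ref{smallgapclassification}(ii) for $c=3$. For $c=1$, your argument that $2v$ has a unique representation and so forces a common endpoint is a valid variant of the paper's normal form $[0,k-1],[k-1,2k-2]$.

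Two small remarks. First, for $k=3$, $c=2$ you apply the chain argument of Lemma~\ref{containment} with $|W|=2$, outside its stated range $L\ge3$. It still works there, since it reduces to checking that the single pair sum is missing, but the paper checks this directly and you should too. Second, your closing worry about tiny primes is moot: $p$ odd with $p>2k-1$ already gives $p\ge2k+1$, so $|U|+|I|=2k\le p-1$ holds in every case.
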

\begin{proof}
Suppose $A\ne B$.  By \eqref{anr}, the hypothesis forces
$|A\rs B|=2k-2$.  For $k\ge5$, this is ruled out by
\cite[Theorem 7]{DGHM}, which gives $A=B$.

It remains to consider $2\le k\le4$.  Put $I=A\cap B$,
$U=A\cup B$, $D_A=A\setminus B$, and $D_B=B\setminus A$, and write
$c=|I|$.  If $c=0$, then $A\rs B=A+B$, and
Cauchy--Davenport gives $|A\rs B|\ge2k-1$, a contradiction.
If $c=1$, say $I=\{v\}$, then
$A+B=(A\rs B)\cup\{2v\}$.  Hence $|A+B|=2k-1\le p-2$, and Vosper's
theorem makes $A$ and $B$ equal-length arithmetic progressions with a
common difference.  Since they meet in exactly one point, after a common
affine transformation they are $[0,k-1]$ and $[k-1,2k-2]$; but then
$2v=2k-2=0+(2k-2)\in A\rs B$, a contradiction.

Assume $c\ge2$.  Then $U\rs I\subseteq A\rs B$, while
$|U\rs I|\ge |U|+|I|-2=2k-2=|A\rs B|$.  Thus
\begin{equation}\label{equalcompression}
 U\rs I=A\rs B,
 \qquad D_A+D_B\subseteq U\rs I.
\end{equation}
If $c=2$, then $k=3$ or $4$, and Proposition~\ref{smallcard}
classifies $(U,I)$.  When $k=4$, the difference set $U\setminus I$
has four elements, so the $c=2$ reflection-propagation argument in
Lemma~\ref{containment}, together with \eqref{equalcompression},
forces it into one of $D_A,D_B$, a contradiction.

For $k=3$, the difference set has only two elements, which we check
directly.  Normalize $I=\{0,1\}$.  Up to the reflection $x\mapsto1-x$,
Proposition~\ref{smallcard} gives either $U=[0,3]$ or
$U=\{0,1,\ell,\ell+1\}$ with $3\le\ell\le p-3$.
In the first case, $U\setminus I=\{2,3\}$ and
$U\rs I=[1,4]$, so its two points have the missing sum $5$.
In the second case, $U\rs I=\{1\}\cup[\ell,\ell+2]$.
If $2\ell+1<p$, then $2\ell+1>\ell+2$; otherwise its standard
residue is $2\ell+1-p<\ell$ and cannot equal $1$, since that would
imply $2\ell=0$ in $\F$.  Thus the sum of the two points of
$U\setminus I$ is again missing.  But $D_A,D_B$ are singletons in
this case, so that sum belongs to $D_A+D_B$, contradicting
\eqref{equalcompression}.

Hence $c\ge3$, which is possible here only for $k=4$ and $c=3$.
Then $(U,I)$ has sizes $(5,3)$ and is critical, so
Proposition~\ref{smallgapclassification}(ii) applies.  Its two deleted
points are exactly the two points of $U\setminus I=D_A\sqcup D_B$, and
for each of the four models their sum is missing from $U\rs I$.  This
contradicts $D_A+D_B\subseteq U\rs I$ in \eqref{equalcompression}.
Thus $A=B$.
\end{proof}

For $k=1$, distinct singletons are not critical, so the equal-cardinality
row of Table~\ref{total} is complete.

\begin{proposition}\label{twomissingclassification}
If $a+b=p$ and $a\ge b$, then $(A,B)$ is critical if and only if,
up to common affine equivalence, it is of the form \eqref{twoform}.
\end{proposition}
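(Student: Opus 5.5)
The plan is to reduce the statement to the boundary classification of Li, Li, and Yuan \cite[Theorem 1.1]{HLY} and then match their explicit two-missing-sum models with the normal form \eqref{twoform}. For a critical pair with $a+b=p$ we have $m=2$. Write $\F\setminus(A\rs B)=\{2h_1,2h_2\}$ with $h_1\ne h_2$. By a common affine map we may assume $h_1=0$ and $h_2=1$, so that $H=\{0,1\}$ and the missing sums are $0$ and $2$. The reflection implication of Subsection~\ref{reflectionclosure} then says that $-x\in A^c$ for $x\in B\setminus\{0\}$ and $2-x\in A^c$ for $x\in B\setminus\{1\}$. Moreover $|A^c|=p-a=b$. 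So $A^c$ has exactly $b$ elements and must contain both reflected sets $-(B\setminus\{0\})$ and $2-(B\setminus\{1\})$.

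\emph{Necessity.} This is the step I would carry out carefully even if I ultimately cite \cite{HLY}. Consider the permutation $\phi:x\mapsto x+2$, which is the composition of the two reflections. The union of the two reflected sets has size at most $b$, while each reflected set has size $b$ or $b-1$. This forces strong overlap: up to one or two exceptional points, $-B$ and $2-B$ agree on $A^c$. Hence $B$ is nearly invariant under $\phi$, apart from chain endpoints.

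Tracking the chains $x\mapsto x+2$ starting from the exceptional points $0$ and $1$, I expect the following picture. $B$ splits into a chain of even residues going down from $0$, namely $0,-2,\dots,-2(r-1)$, and a chain of odd residues going up from $1$, namely $1,3,\dots,2s-1$. This is exactly $B_{r,s}$. Then $A^c$ is forced to be $K_{r,s}=\{2,4,\dots,2r\}\cup\{-1,-3,\dots,-(2s-1)\}$, obtained by reflecting these chains. The bookkeeping should go as follows. An element $x\in B$ with $x\ne 0,1$ produces two elements of $A^c$. A counting argument, $|A^c|=b$, then shows that each element of $A^c$ arises from two reflections, except for the elements $2$ and $-1$ when $0\in B$ or $1\in B$. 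Each non-exceptional element of $A^c$ being hit twice is exactly the statement that $B$ is closed under shifting by $\pm2$ away from its endpoints.

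The main obstacle is the cyclic wraparound of these $\pm2$ chains in $\F$. Since $b\le (p-1)/2$, the even and odd chains, viewed as a single cycle of $\phi$, must not collide. I would handle this by noting that $r+s=b$ together with $|B_{r,s}\cup(-K_{r,s})|$ fitting in $\F$ is precisely the constraint $b\le(p-1)/2$, and that the case $a\ge b$ forces $b\le p/2$. Rather than redo all of this, I would cite \cite[Theorem 1.1]{HLY}, whose models all have $B\subseteq A$ by \cite[Proposition 3.2(i)]{HLY}, and verify that its two-missing-sum models become \eqref{twoform} after the normalization $H=\{0,1\}$.

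\emph{Sufficiency.} Take $A=\F\setminus K_{r,s}$ and $B=B_{r,s}$ with $r+s=b$. First check that $|K_{r,s}|=b$; the sets $B_{r,s}$ and $K_{r,s}$ are disjoint from their own overlaps because $2b<p$. Then verify the reflection condition for $H=\{0,1\}$:
\begin{itemize}
\item For $x=-2i\ne0$ we get $-x=2i\in K_{r,s}$ and $2-x=2i+2\in K_{r,s}$, the latter valid for $i\le r-1$.
\item For $x=2j+1$ we get $-x\in K_{r,s}$, and for $x\ne1$ we get $2-x=-(2j-1)\in K_{r,s}$.
\end{itemize}
Since $|A^c|=b=|B|+|H|-2$, Lemma~\ref{gridlemma} and Lemma~\ref{reconstruction} give $|A\rs B|=a+b-2=p-2$, which is criticality.
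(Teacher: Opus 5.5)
Your proof is essentially the paper's. Necessity is the citation of \cite[Theorem 1.1]{HLY} after normalizing the two missing sums to $0,2$; the paper supplies the dictionary you leave as ``verify'': HLY's missing sums $0,1$ are moved by the dilation $x\mapsto 2x$, and their parameter $v$ becomes $r=v$, $s=b-v$. Your direct sufficiency check via Lemmas~\ref{gridlemma} and~\ref{reconstruction} is correct once you add the omitted case $x=0$, $h=1$, where $2-0=2\in K_{r,s}$ because $r\ge1$. The bookkeeping in your heuristic sketch is off: the points of $K_{r,s}$ reached by only one reflection are the far ends $2r$ and $-(2s-1)$, not $2$ and $-1$. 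Since you ultimately rely on the citation, as the paper does, this does not affect the proof.
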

\begin{proof}
Since $p$ is odd, $a+b=p$ and $a\ge b$ imply $a>b$.  The result is
exactly \cite[Theorem 1.1]{HLY}.  Their normalization sends the two
missing restricted sums to $0,1$; after the common dilation
$x\mapsto2x$ these become $0,2$.  With their parameter $v$ written as
$r=v$ and $s=b-v$, their models are precisely the pairs
$\mathcal T_{r,s}$ in \eqref{twoform}.
\end{proof}

We next consider the case of a single missing restricted sum.

\begin{proposition}\label{pplus}
Assume $A\ne B$ and $a+b=p+1$.  Then $(A,B)$ is critical if and only if,
up to common affine equivalence, it is of the form \eqref{plusform} for
some $h\in B$.  For a representative satisfying \eqref{plusform}, $2h$
is the unique missing restricted sum.
\end{proposition}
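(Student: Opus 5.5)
Here $a+b=p+1$, so criticality means $|A\rs B|=p-1$, i.e.\ exactly one restricted sum is missing. The plan is to prove necessity by the reflection implication of Subsection~\ref{reflectionclosure} and a counting argument, and sufficiency by a direct representation count.

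\emph{Necessity.} Assume $(A,B)$ is critical. Let $2h$ be the unique element of $\F\setminus(A\rs B)$, so $H=\{h\}$. The reflection implication gives $2h-x\in A^c$ for every $x\in B\setminus\{h\}$. Since reflection is injective, this yields
\[
|A^c|\ge |B\setminus\{h\}|.
\]
The left side is $p-a=b-1$, while the right side is $b-\ind_{h\in B}$.

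First I would rule out $h\notin B$. In that case the inequality reads $b-1\ge b$, which is impossible. Hence $h\in B$, and comparing cardinalities forces equality:
\[
A^c=2h-(B\setminus\{h\})=(2h-B)\setminus\{h\}.
\]
Taking complements gives $A=(\F\setminus(2h-B))\cup\{h\}$, which is \eqref{plusform}. No affine normalization is even needed, so the ``up to common affine equivalence'' clause is automatic.

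\emph{Sufficiency.} Conversely, take $A$ as in \eqref{plusform} with $h\in B$. Then $|A|=p-b+1$, so $a+b=p+1$. I would show that every $s\ne 2h$ lies in $A\rs B$ and that $2h$ does not.

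For $2h$: a representation $2h=y+x$ with $x\in B$ and $y\in A$ forces $y=2h-x$. If $x\ne h$, then $y\in 2h-B$ and $y\ne h$, so $y\notin A$. If $x=h$, then $y=h$, which is the excluded diagonal. Hence $2h$ is missing.

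For $s\ne 2h$: consider the maps $x\mapsto s-x$ on $B$. The images $s-B$ form a set of size $b$, and $|A^c|=b-1$. So some $x\in B$ has $s-x\in A$. This representation is admissible unless $s-x=x$, i.e.\ $x=s/2$.

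This diagonal case is the step I expect to need the most care. I would argue it by counting. If the only $x\in B$ with $s-x\in A$ is $x_0=s/2$, then $s-(B\setminus\{x_0\})\subseteq A^c$. Both sets have size $b-1$, so
\[
A^c=s-(B\setminus\{x_0\}).
\]
Since also $A^c=2h-(B\setminus\{h\})$, the set $A^c$ is symmetric about two centers, namely $h$ and, after accounting for the removed points, $s/2$. I would then compare these two descriptions. A set of size $b-1$ with $1\le b-1<p$ cannot be invariant under the nontrivial translation by $s-2h$ in $\F$, and I expect this to force $s=2h$, a contradiction. The subtlety is that the removed points $x_0$ and $h$ differ, so the translation invariance must be extracted carefully, for instance by comparing $\F\setminus A^c$ adjusted by those single points. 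If this comparison fails, I would fall back on the bound \eqref{anr} with $A\ne B$: it already gives $|A\rs B|\ge p-1$, and together with the missing sum $2h$ this yields exactly $p-1$.

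Once every sum other than $2h$ is represented, $2h$ is the unique missing restricted sum, which completes both the sufficiency and the final sentence of the statement.
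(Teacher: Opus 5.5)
Your necessity argument is the paper's counting in a slightly different form. The paper phrases it as $|A\cap(2h-B)|\ge a+b-p=1$ together with $A\cup(2h-B)=\F$. Your fallback for sufficiency is exactly the paper's proof: \eqref{anr} for $A\ne B$ combined with the missing sum $2h$. Along that route the proposal is complete.

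The direct route you put first, however, does not work as sketched.
\begin{itemize}
\item In the diagonal case, put $B'=2h-B$ and $d=s-2h\ne0$. The two descriptions of $A^c$ give $B'+d=(B'\setminus\{h\})\cup\{s/2\}$. This is a one-point exchange, not translation invariance, and arithmetic progressions of difference $d$ satisfy it.
\item It also cannot force $s=2h$ on its own. Take $p=5$, $A=B=\{0,1,2\}$ and $h=0$. The pair has the form \eqref{plusform}, yet $A\rs B=\{1,2,3\}$, so $s=4$ is missing as well.
\end{itemize}
Hence any direct argument must use $A\ne B$, and your sketch never does.

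The route can be repaired:
\begin{itemize}
\item The exchange relation forces $B'=\{h,h+d,\ldots,h+(b-1)d\}$ and $s/2=h+bd$.
\item Since $d=2(s/2-h)$, this gives $(2b-1)d=0$, so $b=(p+1)/2$.
\item Then $A=\{h,h-d,\ldots,h-(b-1)d\}=B$, contradicting $A\ne B$.
\end{itemize}
Completed this way, your direct route gives a fully elementary sufficiency proof. It avoids \eqref{anr}, and with it the Alon--Nathanson--Ruzsa and K\'arolyi theorems on which that bound rests. The case $a=b=(p+1)/2$, which is exactly where the diagonal obstruction can arise, would otherwise need K\'arolyi's equal-cardinality result.
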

\begin{proof}
For necessity, criticality gives $|A\rs B|=p-1$, so there is a unique
missing sum.  Write it uniquely as $2h$, since $p$ is odd.
The basic reflection implication from Subsection~\ref{reflectionclosure}
gives $A\cap(2h-B)\subseteq\{h\}$.  On the other hand,
$|A\cap(2h-B)|\ge a+b-p=1$.  Hence $A\cap(2h-B)=\{h\}$.  Since
$|A|+|2h-B|-1=a+b-1=p$, one also has $A\cup(2h-B)=\F$.  In particular $h\in A\cap B$ and
\[
 A^c=(2h-B)\setminus\{h\},\qquad
 A=(\F\setminus(2h-B))\cup\{h\},
\]
which is \eqref{plusform}.

Conversely, assume \eqref{plusform} for some $h\in B$.  Then
$A\cap(2h-B)=\{h\}$, so the only possible representation of $2h$ is
the excluded diagonal pair $(h,h)$.  Hence $2h\notin A\rs B$.
The mixed lower bound gives $|A\rs B|\ge p-1$, while the missing sum
gives the reverse inequality.  Thus $|A\rs B|=p-1$, and the pair is
critical with unique missing restricted sum $2h$.
\end{proof}

It remains to consider the saturated regime $a+b\ge p+2$.
Fix $s\in\F$.  The elements of $A\cap(s-B)$ are in bijection with the
ordinary representations $s=x+y$ with $x\in A$ and $y\in B$, via
$y=s-x$.  Therefore $|A\cap(s-B)|\ge |A|+|s-B|-p=a+b-p\ge2$.
Among these representations at most one is diagonal, since $x=y$ would
force the unique value $x=y=s/2$.  Thus at least one representation has
$x\ne y$, and hence $s\in A\rs B$.  Since $s$ was arbitrary,
$A\rs B=\F$.  Consequently every mixed pair with $a+b\ge p+2$ is critical.

\begin{proof}[Completion of the proof of Theorem~\ref{main}]
The large-gap interior is Proposition~\ref{lg}.  Proposition~\ref{smallcard}
handles $b=1,2$, and Proposition~\ref{smallgapclassification} handles
the two small gaps for $b\ge3$.  Proposition~\ref{equalprop} excludes
distinct equal-cardinality pairs, with $k=1$ treated above.  At the
boundary, Proposition~\ref{twomissingclassification} handles $a+b=p$,
Proposition~\ref{pplus} handles $a+b=p+1$, and the preceding counting
argument gives saturation for $a+b\ge p+2$.  These ranges are pairwise
disjoint and exhaustive, and each displayed family has been verified in
the corresponding result.  This completes the proof.
\end{proof}

\appendix
\section{Finite coefficient checks}\label{app:finitechecks}

This appendix supplies the finite identities used in
Subsections~\ref{recurrence} and~\ref{eliminationdetails}.  For
$1\le j\le b$, the right-hand side is
\[
 \mathbf v_j=\mathsf Q_j\mathbf h_j+(-1)^{j+1}\mathsf R_j\mathbf c_j
\]
from Proposition~\ref{largegaprecursion}.  At levels $j>b$ we instead
use the actual-degree equations derived from
Lemmas~\ref{companionremainder} and~\ref{toeplitzdivision} in
Appendix~\ref{app:degreeboundary}.  These equations use only the $b$ actual coefficient rows; the matrices of Proposition~\ref{largegaprecursion}
are not being extended beyond their stated range.

We fix the row notation used in the stable-level calculations.  For an
ordered tuple $I$ of distinct row indices, $\mathsf M_{j,I}$ and
$\mathbf v_{j,I}$ denote the corresponding row-submatrix of
$\mathsf M_j$ and subvector of $\mathbf v_j$, in that order.
For a single row, we use $(\mathsf M_j)_{r,*}$ and retain
$V_{r,j}=(\mathbf v_j)_r$.  Thus the level index $j$ always belongs
to the underlying matrix, and $I$ or $r$ selects its rows.

For $4\le j\le b$, the following construction avoids a case split
according to which consecutive minor is nonzero.  Put
\[
 I_0=(j-3,j-2,j-1),\qquad I_1=(j-2,j-1,j),
 \qquad \mathsf N_\ell=\mathsf M_{j,I_\ell},
\]
\[
 \mathbf b_\ell=\mathbf v_{j,I_\ell},\qquad
 \gamma_\ell=\Gamma_j(m+j-4+\ell),\qquad
 d=-2(m+2b-j+3).
\]
By \eqref{detfactor}--\eqref{detdifference}, every solution of the
current system satisfies
\begin{equation}\label{stable_candidate_formula}
 \mathbf x_j=
 \frac{\operatorname{adj}(\mathsf N_1)\mathbf b_1/\gamma_1
       -\operatorname{adj}(\mathsf N_0)\mathbf b_0/\gamma_0}{d}.
\end{equation}
Indeed, for such a solution the two terms in the numerator are
$\Pi_j(m+j-3)\mathbf x_j$ and $\Pi_j(m+j-4)\mathbf x_j$.
Their difference is $d\mathbf x_j$, and $\gamma_0,\gamma_1,d$ are
units by Lemma~\ref{rank}.  The right-hand side therefore specifies
the only possible candidate; it is a solution only if all rows of
$\mathsf M_j\mathbf x_j=\mathbf v_j$ hold.  We use this construction
in Appendix sections~\ref{app:fifth} and~\ref{app:even}.

\subsection{Low-order data used in the finite checks}\label{app:loworder}

For the reference progressions \eqref{referenceprogressions}, put
$\mu=zt/(n(z+1))$.  The coefficients through level three are
\begin{equation}\label{reference_low_explicit}
\begin{aligned}
 c_1^0&=t,\qquad h_1^0=-m\mu,\\
 c_2^0&=\frac{n-1}{2n}t^2
              -\frac{n^2-1}{6n(z+1)^2}t^2,\\
 h_2^0&=\frac{m(m-1)}2\mu^2-\frac{m(m^2-1)}{96}\delta^2,\\
 c_3^0&=\frac{(n-1)(n-2)}{6n^2}t^3
              -\frac{(n-2)(n^2-1)}{6n^2(z+1)^2}t^3,\\
 h_3^0&=-\frac{m(m-1)(m-2)}6\mu^3
           +\frac{m(m-2)(m^2-1)}{96}\mu\delta^2.
\end{aligned}
\end{equation}
They follow by multiplying the arithmetic-progression root factors; the
relevant power sums have denominators dividing $24$, hence are valid in
the present characteristic range.

We record the third-level identities used later.  Put
$k_1=-mz/(n(z+1))$, so $h_1=k_1t$.  Substitution of
\eqref{mregsecond} into \eqref{third_explicit_rows_main} gives
$\mathbf v_3-\mathbf v_3^0=t\sigma\mathbf d_3$, where
\[
\mathbf d_3=
\begin{pmatrix}
 -\gamma_2\lambda_{b-1}(3,2)\\
 -\gamma_2\lambda_{b-2}(3,2)
       +\beta_2(-\lambda_{b-2}(1,0)-k_1)\\
 \beta_2(-\lambda_{b-3}(1,0)-k_1)
       +k_1(\alpha_2\lambda_{b-3}(2,0)-\beta_2)
\end{pmatrix}.
\]
Multiplication by the row vector $(\eta_1,\eta_2,\eta_3)$ from the
third-level calculation gives $K_3\vartheta_3$ after substituting
\eqref{alpha2beta2} and \eqref{lambda}; this is the finite identity used
in deriving \eqref{mregthird}.  No division by $\Delta_2$ occurs.

On the branch $t\ne0$, the third-level deviations under
\eqref{mregxi} are
\begin{equation}\label{c3h3regular}
 c_3=c_3^0+\alpha_3t\xi,\qquad
 h_3=h_3^0+\beta_3t\xi,
\end{equation}
where
\begin{equation}\label{alpha3beta3}
\begin{aligned}
 \alpha_3&=\frac{(n-1)(n-2)(n+1)(\Delta_2+5z+2)
                   ((z+1)\Delta_3+z(z+5))}
                   {4b(b^2-1)n^2(z+1)^3},\\
 \beta_3&=-\frac{z(m-2)(m-1)m(m+1)
             (12b^2+8bz^2-68b+z^4-21z^2+92)}
             {4b(b^2-1)n^3(z+1)^3}.
\end{aligned}
\end{equation}
We verify these formulas in the full system, without selecting a fixed
two-row subsystem.  Substituting \eqref{lambda} and
\eqref{alpha2beta2} into the three components gives
\begin{equation*}
 \mathsf M_3
 \begin{pmatrix}\alpha_3\\\beta_3\\-\vartheta_3\end{pmatrix}
 =\Delta_3\mathbf d_3.
\end{equation*}
Since $\sigma=\Delta_3\xi$ and $u_3^0=0$, the vector
$\mathbf x_3^0+t\xi(\alpha_3,\beta_3,-\vartheta_3)^{\mathsf T}$
therefore satisfies all three equations.  If $\Delta_3\ne0$, the
invertibility of $\mathsf M_3$ gives uniqueness.  If $\Delta_3=0$,
\eqref{mregxi} fixes $u_3$, and the nonzero $c_3,h_3$ minor established
on that locus in Subsection~\ref{recurrence} gives uniqueness of
$c_3,h_3$.  This verification divides by neither $\Delta_2$ nor $\Delta_3$ and
does not assume that a fixed $c_3,h_3$ subminor is nonzero away from
$\Delta_3=0$.  At $\xi=0$ all three initial vectors agree with the reference.

\subsection{Actual-degree equations and one-step specialization}\label{app:degreeboundary}

We first specify the coefficient equations used when $j>b$.
Keep the actual degree-$b$ polynomial $F_B$, and let $\mathsf I_b$ be
the $b\times b$ identity matrix.  Applying
Lemma~\ref{companionremainder} coefficientwise in $Y$ gives
\begin{equation}\label{actual_degree_remainder}
 \begin{aligned}
 G_i(Y)&=-2^{-n}\mathbf e_i^{\mathsf T}
  (\mathsf C_{F_B}-Y\mathsf I_b)
  F_{A^c}(2Y\mathsf I_b-\mathsf C_{F_B})\mathbf e_0,\\
 G_i(Y)&=\sum_{s=0}^{m+r-1}g_{i,s}Y^{m+r-1-s},
 \qquad r=b-i,\quad 0\le i<b.
 \end{aligned}
\end{equation}
The leading coefficient is the nonzero $g_{i,0}$ in \eqref{leading_gi}.
Apply Lemma~\ref{toeplitzdivision} to $F_H$ and $G_i/g_{i,0}$, with
$d=r-1$.  Its top block is $\mathsf H_r$, and its quotient vector is
$\widehat{\mathbf q}_r$ from \eqref{history_quotient_vector}.
Thus its tail equations are precisely
\begin{equation}\label{actual_degree_residual}
 \mathcal D_{i,j}:=g_{i,j}
   -g_{i,0}\sum_{s=0}^{r-1}\widehat q_{r,s}h_{j-s}=0,
 \qquad r\le j\le m+r-1,
\end{equation}
where $\widehat q_{r,0}=1$.  With $g_{i,j}=0$ beyond the displayed
degree, later equations are identically zero by
\eqref{actual_degree_conditions}.  These equations use only the
actual rows $0\le i<b$ and remain valid at every needed endpoint.

To distinguish raw and normalized right-hand sides, set
\begin{equation}\label{actual_degree_rhs}
 \mathcal W_{i,j}:=
 -\left.\mathcal D_{i,j}\right|_{c_j=h_j=u_j=0}.
\end{equation}
For $r\le j\le b$, the calculation in
Proposition~\ref{largegaprecursion} gives
\begin{equation}\label{actual_degree_normalization}
 \begin{aligned}
 \mathcal D_{b-r,j}
   &=g_{b-r,0}\bigl((\mathsf M_j)_{r,*}\mathbf x_j-V_{r,j}\bigr),\\
 \mathcal W_{b-r,j}&=g_{b-r,0}V_{r,j}.
 \end{aligned}
\end{equation}
In particular, raw rows and right-hand sides are multiplied by
$g_{i,0}$, not merely reordered.  For $j>b$, $u_j=0$ is imposed from
the outset, and \eqref{actual_degree_residual} is linear in the
remaining current coefficients $c_j,h_j$, with raw right-hand side
$\mathcal W_{i,j}$.  A coefficient past its actual degree is fixed to
zero.  Formally retaining a column for such a coefficient can only
enlarge a candidate system, so inconsistency of the enlarged system also rules out an actual
solution.  We do not write these endpoint
right-hand sides as components of the undefined $j$-row vector
$\mathbf v_j$.

For the one-step endpoints $(b,j)=(3,4),(4,5),(5,6)$, the same
necessary identities can also be obtained by regular specialization.
At $b=j-1$, the range \eqref{rangealllarge} reads
$3\le m\le p-2j+1$.
Fix $j\ge4$, put $n=m+b-2$ and $\varepsilon=b-j+1$, and regard the
already evaluated coefficient entries as rational expressions in
$b,m$.  This is entrywise continuation from degrees $b\ge j$, not
an evaluation of $\mathbf e_{b-j}$ or of a variable-size companion matrix at
$b=j-1$.  In the last formal row, the only singular entry is the
$u_j$-coefficient.  Multiplying that row by $\varepsilon$ and using
\eqref{lambda} gives, at $b=j-1$,
\[
 \tau_j u_j=0,
 \qquad
 \tau_j=\frac{(-1)^{j+1}\fall{n+2}{j}(n+j)}
                  {2^j(j-1)!\,n}.
\]
The historical reduction entries on this row are zero: fewer than $j$
steps cannot return to $X^{b-j}$, and the historical polynomial omits
$u_j$.  Its quotient entries involve only $\lambda_{b-j}(k,0)$ with
$k<j$, which are regular at $\varepsilon=0$.  Hence the historical right-hand
side is killed by the same factor, and the scaled last row becomes
exactly $u_{b+1}=0$.

After $u_j=0$, the remaining rows specialize to
\eqref{actual_degree_residual}, up to the nonzero row scalings in
\eqref{actual_degree_normalization}.  We verify that the regularized coefficient matrix still has rank
three.  In $\Gamma_j$ we retain
$n=m+b-2$ until cancellation; after specialization, the expressions
on the right below use $n=m+j-3$.  The first consecutive triple does
not contain the scaled last row, while the second does.  Their
prefactors are respectively
\begin{equation}\label{boundary_prefactors}
\begin{aligned}
 \gamma_0^*
 &=\left.\Gamma_j(m+j-4)\right|_{b=j-1}
   =-\frac{j^2(j+1)\fall{n-1}{j-2}}
            {2^j(j+2)!(n+2)},\\
 \gamma_1^*
 &=\left.(b-j+1)\Gamma_j(m+j-3)\right|_{b=j-1}
   =-\frac{j^2(j+1)(n+1)\fall{n-1}{j-3}}
            {2^j(j+1)!(m-2)}.
\end{aligned}
\end{equation}
In the second evaluation the factor $b-j+1=n-x$ in
\eqref{detfactor} is cancelled before specialization.  The displayed
simplifications use
$\rise{3}{j}=(j+2)!/2$, $\rise{2}{j}=(j+1)!$, and
$\fall{n}{j}=n\fall{n-1}{j-1}$.
The resulting determinants are
$\gamma_0^*\Pi_j(n-1)$ and $\gamma_1^*\Pi_j(n)$.
The falling products in these prefactors have factors from $m-1$ to
$n-1$ and from $m$ to $n-1$, respectively.  Both prefactors are units:
from $3\le m\le p-2j+1$ we have
\[
 1\le m-2<p,\qquad
 m\le n-1=m+j-4<p,\qquad
 n+2=m+j-1\le p-j<p,\qquad j+2<p.
\]
These inequalities cover every factor in \eqref{boundary_prefactors}.
Moreover, $\Pi_j(n)-\Pi_j(n-1)=-2(m+j+1)\ne0$, because $0<m+j+1\le p-j+2<p$.  Thus at least one regularized
minor is nonzero.  Finally, the factors of $\fall{n+2}{j}$ run from
$m$ to $m+j-1$, and $0<n+j=m+2j-3\le p-2$, so $\tau_j$ is a unit.
This proves the rank and unit assertions needed for the one-step
specialization of the necessary identities.  More distant
endpoint levels are computed directly from
\eqref{actual_degree_remainder}--\eqref{actual_degree_residual}, as in
Appendix sections~\ref{app:cubic} and~\ref{app:quartic}; no stable-rank claim
is made for $j>b$.

\subsection{Fourth-level compatibility}\label{app:fourth}

Abbreviate $\lambda_r^{k,\ell}=\lambda_{b-r}(k,\ell)$.  Direct
evaluation of
\[
 \mathbf v_4=\mathsf Q_4\mathbf h_4-\mathsf R_4\mathbf c_4
\]
from Proposition~\ref{largegaprecursion} gives, for $b\ge4$,
\begin{equation}\label{fourth_rhs}
\begin{aligned}
 V_{1,4}&=c_2u_2\lambda_1^{4,2}
                   +c_1u_3\lambda_1^{4,3}-u_2^2\lambda_1^{4,4},\\
 V_{2,4}&=c_2u_2\lambda_2^{4,2}
                   +c_1u_3\lambda_2^{4,3}-u_2^2\lambda_2^{4,4}
                   +h_3\widehat q_{2,1},\\
 V_{3,4}&=c_1u_3\lambda_3^{4,3}
                   +h_3\widehat q_{3,1}+h_2\widehat q_{3,2},\\
 V_{4,4}&=h_3\widehat q_{4,1}+h_2\widehat q_{4,2}
                                  +h_1\widehat q_{4,3}.
\end{aligned}
\end{equation}
The quotient coefficients are those defined in
\eqref{history_quotient_vector}.  When $m=3$, the first row expresses the degree condition $h_4=0$,
and the corresponding right-hand side is zero.

\begin{proof}[Verification of \eqref{mregfourthnonzero}--\eqref{mregfourthzero}]
For $b\ge4$ write the rows as
$\ell_rc_4-h_4+\rho_ru_4=V_{r,4}$, where
$\ell_r=\lambda_r^{4,0}$ and $\rho_r=-\lambda_r^{4,4}$.
Any three independent rows form an invertible subsystem of
$\mathsf M_4\mathbf x_4=\mathbf v_4$ and determine the candidate uniquely.  The remaining equation is equivalent
to $\Phi_4=0$, where the following explicit row combination eliminates
all three current coefficients.  If $i<k<s$ are the rows other than
$r$, put
\[
 \omega_r=(-1)^{r-1}
 \bigl((\ell_k-\ell_i)(\rho_s-\rho_i)
             -(\ell_s-\ell_i)(\rho_k-\rho_i)\bigr),
 \qquad
 \Phi_4=\sum_{r=1}^{4}\omega_rV_{r,4}.
\]
Let $\boldsymbol\omega=(\omega_1,\ldots,\omega_4)^{\mathsf T}$,
so that $\Phi_4=\boldsymbol\omega^{\mathsf T}\mathbf v_4$.
Its entries are signed $3\times3$ minors; hence
$\boldsymbol\omega^{\mathsf T}\mathsf M_4=0$ and
$\boldsymbol\omega\ne0$ by Lemma~\ref{rank}.
This also proves the claimed equivalence for $b\ge4$.

Insert \eqref{mregsecond} and \eqref{c3h3regular} in the four explicit
entries \eqref{fourth_rhs}.  The possible monomials in the nonzero-scale
case are $t^4,t^2\xi,\xi^2$.  Their coefficients in the displayed row
combination are
\begin{equation}\label{fourth_coefficient_table}
\begin{array}{c|c}
 \text{monomial}&\text{coefficient in }\Phi_4\\ \hline
 t^4&0\\
 t^2\xi&4\Lambda_0\Theta_bz(z+2)\\
 \xi^2&\Lambda_0\Delta_3^2\Psi_4
\end{array}
\end{equation}
with
\begin{equation*}
 \begin{split}
 \Lambda_0={}&-\frac{135m^3(n+2)(m-2)(m-1)^2(m+1)^2(m+2)}
 {b^3(b-3)(b-2)(b-1)^3(b+1)^3(b+2)(b+3)}\\
 &\times\frac{(n+1)^2(z+2)}
 {(n-3)(n-2)(n-1)n^5(z-2)(z-1)(z+1)^4}.
\end{split}
\end{equation*}
Here the zero entry also follows without expansion: at $\xi=0$ the
four right-hand sides are those of the known reference solution.
For the quadratic coefficient, set $t=0$.  The coefficient of
$\sigma^2$ in \eqref{fourth_rhs} is the four-component vector with
entries
\[
 \alpha_2\gamma_2\lambda_r^{4,2}
   -\gamma_2^2\lambda_r^{4,4}\quad(r=1,2),
 \qquad \beta_2(\alpha_2\lambda_r^{2,0}-\beta_2)\quad(r=3,4).
\]
Multiplication by $\boldsymbol\omega^{\mathsf T}$ gives
$\Lambda_0\Psi_4$ after substituting
\eqref{alpha2beta2} and \eqref{lambda}.  Multiplying by $\Delta_3^2$
gives the $\xi^2$ entry in \eqref{fourth_coefficient_table}.
The $t^2\xi$ entry is obtained by taking in each product of
\eqref{fourth_rhs} one reference factor and one of the deviations in
\eqref{mregsecond} or \eqref{c3h3regular}; the reference coefficients
are explicitly \eqref{reference_low_explicit}.  Thus
\eqref{fourth_coefficient_table} follows by taking the displayed
linear combination of the four rows; it imposes no additional condition
on the parameters.
It gives
\[
 \Phi_4=\Lambda_0\xi
       (\Delta_3^2\Psi_4\xi+4\Theta_bz(z+2)t^2)
 \quad(t\ne0),\qquad
 \Phi_4=\Lambda_0\Psi_4\sigma^2\quad(t=0).
\]
Every numerator and denominator factor of $\Lambda_0$ is nonzero
for $b\ge4$: in addition to the bounds following \eqref{vartheta3def},
$n\ge5$, $n+2<p$, $m+2<p$, and $b+3<p$.  This proves the result
in the stable range.

For $b=3$, multiply the last generic row by $b-3$ and use
Appendix~\ref{app:degreeboundary}.  The fourth equation becomes a nonzero
multiple of $u_4=0$; the normalized row combination has multiplier
\[
 \left.(b-3)\Lambda_0\right|_{b=3}
 =-\frac{m^2(m-1)(m+2)^2(m+3)(m+5)}
          {3072(m+1)^4(m+4)^4}\ne0.
\]
Here $3\le m\le p-7$, so all displayed factors are units.
The two necessary conditions therefore have the same form at this
boundary.
\end{proof}

\subsection{The fifth-level nonzero-scale calculation}\label{app:fifth}

\begin{proof}[Fifth-level nonzero-scale calculation]
For $b\ge4$, we establish the fifth-level conditions used in
Lemma~\ref{branchscreening}(i).  Only the candidate
\eqref{mregxicandidate} needs to be excluded.  We first calculate in the
stable range $b\ge5$, and then specialize to $b=4$ as described below.
A common dilation by $\kappa\in\F^*$ multiplies every coefficient
of index $s$ by $\kappa^s$.  Taking $\kappa=t^{-1}$ therefore
normalizes $t$ to $1$.
The first three states are given by \eqref{reference_low_explicit},
\eqref{mregsecond}, and \eqref{c3h3regular}, with $\xi$ fixed by
\eqref{mregxicandidate}.
Compute the four entries \eqref{fourth_rhs}.  They satisfy the
fourth-level compatibility condition.  To obtain the unique state
$\mathbf x_4$, evaluate \eqref{stable_candidate_formula} at $j=4$,
with $I_0=(1,2,3)$, $I_1=(2,3,4)$ and $d=-2(z+2)$.
This specifies $c_4,h_4,u_4$ even when one of the two consecutive
minors vanishes; no fixed invertible triple is assumed.
These fourth coefficients must be retained in the next right-hand
side.  Substituting them into the matrices $\mathsf Q_5$ and
$\mathsf R_5$ of Proposition~\ref{largegaprecursion} gives the explicit
vector $\mathbf v_5$.  We test this vector against two consecutive $3\times3$ subsystems
of $\mathsf M_5$.

Next evaluate \eqref{stable_candidate_formula} at $j=5$, using the
triples $(2,3,4)$ and $(3,4,5)$ and $d=-2(z+1)$.
Denote this candidate by $\mathbf y_5$ and form
\[
 E_{1,5}=(\mathsf M_5)_{1,*}\mathbf y_5-V_{1,5},\qquad
 E_{2,5}=(\mathsf M_5)_{2,*}\mathbf y_5-V_{2,5}.
\]
Every actual fifth-level solution must make both expressions zero.
No fifth-level free parameter is introduced.  Define
\begin{equation}\label{U5}
 \mathcal U_5=-\frac{192m(m+b)(z+3)(m-2)(m-1)(m+b-1)z^2(z+2)^2}
 {(b+3)n^5(z+1)^5\Delta_3^3\Psi_4^2},
\end{equation}
and normalize by
\[
 \widehat E_{1,5}=\frac{(b+4)(z+1)}{\mathcal U_5}E_{1,5},
 \qquad
 \widehat E_{2,5}=\frac{E_{2,5}}{\mathcal U_5(m+1)}.
\]
All factors used as divisors are units: $\Delta_3\Psi_4\ne0$ by
\eqref{mregxicandidate}, while the remaining positive integer factors
are at most $p-1$ under \eqref{rangealllarge}.

Substitution of the specified fourth-level solution into
$\mathbf v_5=\mathsf Q_5\mathbf h_5+\mathsf R_5\mathbf c_5$,
followed by the two displayed matrix eliminations, gives the following
coefficient table.  It is convenient
to use $m=z-2b+3$, so the entries are polynomials in $z$ over
$\mathbb Z[b]$:
\begin{equation}\label{fifthcoefficienttable}
\begin{array}{c|l|l}
 &\widehat E_{1,5}&\widehat E_{2,5}\\ \hline
 z^5&3b+5&0\\
 z^4&-6b^2+22b+60&0\\
 z^3&-62b^2-89b+115&1\\
 z^2&12b^3+132b^2-216b-600&9\\
 z&-116b^3+584b^2+712b-2220&-2b-16\\
 1&-40b^3+192b^2+360b-1728&20b-108
\end{array}
\end{equation}
The table is obtained from the explicit vector $\mathbf v_5$ in
Proposition~\ref{largegaprecursion}, the candidate formula
\eqref{stable_candidate_formula}, and the displayed normalizations.

Recall $\Delta_0,F_5,G_5$ from
Lemma~\ref{branchscreening}(i), and define the auxiliary
polynomial $\Psi_5$ by
\begin{equation}\label{FGstruct}
\begin{aligned}
 \Delta_0&=z^2+2z-18,\\
 F_5&=(z+6)\Delta_0+(m-3)(z-10),\\
 G_5&=(z+14-m)\Delta_0+(m-3)(4z-37),\\
 \Psi_5&=(3b+5)z^2-(6b^2+5b-15)z-2b^2+14b+60.
\end{aligned}
\end{equation}
Expanding with $m=z-2b+3$ gives
$F_5=z^3+9z^2-(2b+16)z+20b-108$, the second column of
\eqref{fifthcoefficienttable}.  Multiplication by $\Psi_5$ and subtraction
of $8(b+3)G_5$ gives its first column.  Thus
\[
 \widehat E_{2,5}=F_5,\qquad \widehat E_{1,5}=\Psi_5F_5-8(b+3)G_5.
\]
Since $8(b+3)$ is a unit, both residuals can vanish only if
\begin{equation}\label{vfifthres}
 F_5=G_5=0.
\end{equation}
For $b=4$, the same rational identities are specialized as in
Appendix~\ref{app:degreeboundary}, with $u_5=0$.  All final denominators in
\eqref{U5} are still units, and the formulas contain no surviving factor
$b-4$ in a denominator.  These are necessary boundary conditions, not
an application of the rank lemma outside its range.

Finally, direct multiplication of the expressions in \eqref{FGstruct}
gives the integer polynomial identity
\begin{equation*}
 (2z^2-4z+38)F_5-(20-2z)G_5
       =2(z+1)(z+2)^2\Delta_0.
\end{equation*}
The right-hand multiplier is a unit, so \eqref{vfifthres} forces
$\Delta_0=0$.  The two remaining equations are
$(m-3)(z-10)=(m-3)(4z-37)=0$; subtracting four times the first from the
second gives $3(m-3)=0$.  However
$\Delta_3=\Delta_0-3(m-3)$, contrary to $\Delta_3\ne0$.  This completes the fifth-level exclusion
used in Lemma~\ref{branchscreening}(i).
\end{proof}

\subsection{The sixth-level even calculation}\label{app:even}

\begin{proof}[Sixth-level even calculation]
We verify the sixth-level conditions used in
Lemma~\ref{branchscreening}(ii).  The fourth-level condition \eqref{mregfourthzero} forces $\Psi_4=0$.
For $b=3,4$, $\Psi_4$ is respectively $(z-1)(z+2)$ and $z(z+1)$,
and neither can vanish in the prescribed range.  Hence $b\ge5$,
so $p\ge17$.  Combining $\Psi_4=0$ with $z=m+2b-3$ gives, in $\F$,
\begin{equation}\label{q4curve}
 b=4-\frac{z(z+1)}2,\qquad m=z^2+2z-5,
\end{equation}
and consequently $\Delta_2=4-3z$ and
$\Delta_3=-2(z-1)(z+3)\ne0$.  Since $t=0$ and $\Delta_3\ne0$, \eqref{mregthird} and the invertibility
of $\mathsf M_3$ give $\mathbf x_3=0$.

The fourth-level right-hand side is $\sigma^2$ times the
four-component vector computed in Appendix~\ref{app:fourth}.  The first
three rows form an invertible subsystem: on $\Psi_4=0$,
$\Pi_4(m)=-2z(z+2)\ne0$, so their determinant is a unit by \eqref{detfactor}.
Solving the rows $1,2,3$ of $\mathsf M_4\mathbf x_4=\mathbf v_4$ gives
\begin{equation}\label{even_fourth_state}
 \mathbf x_4=\sigma^2\mathbf d_4,\qquad
 \mathbf d_4:=(\alpha_4,\beta_4,\gamma_4)^{\mathsf T},
\end{equation}
where the following factored expressions record the solution needed
at level six:
\begin{equation}\label{even_fourth_coefficients}
\begin{aligned}
 \alpha_4&=
 \frac{(z-1)(z+4)(z+5)(n-3)(n-1)\mathcal A_4(z)}
 {320(z-2)(z+1)^4(z+3)^2b^2(b+1)^2n^3},\\
 \beta_4&=-
 \frac{(z+4)(m-2)(m-1)m(m+1)\mathcal B_4(z)}
 {20(z-2)(z+1)^4(z+3)^2b^2(b+1)^2n^4},\\
 \gamma_4&=-
 \frac{(z-1)(b-2)\mathcal C_4(z)}
 {20(z-2)(z+1)^4(z+3)b(b+1)n^4},\\
 \mathcal A_4(z)&=9z^4+111z^3+356z^2-952,\\
 \mathcal B_4(z)&=9z^4+36z^3-49z^2-170z+198,\\
 \mathcal C_4(z)&=81z^5+51z^4-1648z^3+288z^2+8872z-7776.
\end{aligned}
\end{equation}
These are obtained by substituting
$c_2=\alpha_2\sigma$, $h_2=\beta_2\sigma$, $u_2=\gamma_2\sigma$
in \eqref{fourth_rhs}, dividing by $\sigma^2$, and solving the $3\times3$ subsystem on rows $1,2,3$.
The simplification uses $z^2+z=8-2b$ and $z^2+3z-6=2n$;
no square root of $\sigma$ or dilation setting $\sigma=1$ is used.
The fourth equation is precisely the already imposed condition $\Psi_4=0$.
All denominators in \eqref{even_fourth_coefficients} are units.

The nonzero preceding indices are now even, so
Corollary~\ref{supportpropagation} yields $\mathbf x_5=0$.
We first take $b\ge6$.  At level six,
Proposition~\ref{largegaprecursion} gives
\[
 \mathbf v_6=\mathsf Q_6\mathbf h_6-\mathsf R_6\mathbf c_6.
\]
After substituting \eqref{mregsecond}, \eqref{even_fourth_state}, and
$\mathbf x_3=\mathbf x_5=0$, every entry has the common factor
$\sigma^3$.  We test the resulting vector directly against two
consecutive $3\times3$ subsystems of $\mathsf M_6$.

For $b\ge6$ use the two triples $3,4,5$ and $4,5,6$ in
\eqref{stable_candidate_formula} at $j=6$; its denominator is $-2z$.
Let $\mathbf y_6$ be that candidate, and set
$E_{r,6}=(\mathsf M_6)_{r,*}\mathbf y_6-V_{r,6}$ for $r=2,3$.
Define
\begin{equation*}
 \Omega=-\frac{3456(z-1)(z+4)^2(m-2)(m-1)m(m+1)(z^2+3z-2)}
 {(z-2)^3(z+3)(z^2+z-14)(z^2+z-10)^3
                    (z^2+z-8)^3n^6(z+1)^6}.
\end{equation*}
The two necessary residuals are normalized as
\[
 \widehat E_{3,6}=\frac{E_{3,6}}{\sigma^3\Omega(z-1)},\qquad
 \widehat E_{2,6}=\frac{z(z+3)(z^2+z-16)}{\sigma^3\Omega}E_{2,6}.
\]
All of these divisions are valid.  On \eqref{q4curve} the four
quadratics $z^2+z-14$, $z^2+z-10$, $z^2+z-8$, and $z^2+z-16$
are $-2(b+3),-2(b+1),-2b,-2(b+4)$, respectively.
Also $z^2+3z-2=2(m+b)\ne0$.
The integer bounds give $0<z-2<z+3<p$.  Finally $z+4=0$ together
with $\Psi_4=0$ would imply $2(b+2)=0$, which is impossible.
Thus $\Omega$ and all the stated normalizing factors are units.

The explicit solution \eqref{even_fourth_coefficients} and the direct
evaluation of $\mathbf v_6$ give, after these row operations,
\begin{equation}\label{even_residual_polynomials}
\begin{aligned}
 \widehat E_{3,6}&=z^2-5z-26,\\
 \widehat E_{2,6}&=3z^7-8z^6-160z^5+20z^4+1721z^3
                           +52z^2-4796z+3744.
\end{aligned}
\end{equation}
Recall the polynomials used in Lemma~\ref{branchscreening}(ii):
\begin{equation}\label{vsixthres}
 K_6=z^2-5z-26,\qquad S_6=57z+182.
\end{equation}
Polynomial division of the second line of
\eqref{even_residual_polynomials} by $K_6$ gives
\[
 \widehat E_{2,6}=(3z^5+7z^4-47z^3-33z^2+334z+864)K_6+144S_6.
\]
Hence an actual solution requires $K_6=S_6=0$, because $144$ is a unit.
For $b=5$, the same necessary identities follow from the one-step
specialization in Appendix~\ref{app:degreeboundary}, with $u_6=0$.  The final denominators
remain units: $z^2+z=-2$ makes the four quadratics above
$-16,-12,-10,-18$, and $p\ge17$.  Thus this boundary is also covered.

The contradiction is the integer polynomial identity
$57^2K_6-(57z-467)S_6=520$.  For example, its two products have the same quadratic and linear terms,
$3249z^2-16245z$, and constant terms $-84474$ and $-84994$.
Their difference is $520=2^3\cdot5\cdot13$, nonzero for $p\ge17$.
No division by $57$ is used.  This excludes every state in the even
branch.
\end{proof}

\subsection{The pure-third sixth-level obstruction}\label{app:purethird}

\begin{proof}[Pure-third sixth-level calculation]
We supply the obstruction for $b\ge5$ used in
Lemma~\ref{branchscreening}(iii).  If $u_3=0$, then $t=\sigma=0$ makes the third-level right-hand side
zero; the nonzero $c_3,h_3$ minor in the main text gives
$\mathbf x_3=0$, including on $\Delta_3=0$.  All preceding states of positive index then vanish.
At each $4\le j\le b$, Proposition~\ref{largegaprecursion} gives
$\mathbf v_j=0$ and Corollary~\ref{uniquecontinuation} gives
$\mathbf x_j=0$.  Thus $F_B=X^b$, contrary to distinctness of its roots.
We may therefore assume $v:=u_3\ne0$; \eqref{mregthird} forces
$\Delta_3=0$.

Solving the two independent third equations gives
\begin{equation}\label{thirdprofileparameters}
 c_3=\alpha v,\quad h_3=\beta v,\qquad
 \alpha=\frac{(z+6)(z+8)n(n-1)}{8b(b+1)(z-6)(z-4)},\quad
 \beta=-\frac{3m(m^2-1)}{2b(b+1)(z-6)(z-4)}.
\end{equation}
These expressions follow directly by substituting $\Delta_3=0$ into
the first two rows of the third-level system.
At level four no lower-order index sum is possible, and then the same
is true at level five.  Corollary~\ref{supportpropagation}, applied
successively, gives $\mathbf x_4=\mathbf x_5=0$.
If $b=5$, this already gives
$F_B=X^5+vX^2=X^2(X^3+v)$, which has a repeated root.  This excludes $b=5$ without using level six.

Now let $b\ge6$.  Since $\mathbf x_4=\mathbf x_5=0$, direct
evaluation of
\[
 \mathbf v_6=\mathsf Q_6\mathbf h_6-\mathsf R_6\mathbf c_6
\]
shows that the relevant rows $r=3,4,5,6$ are $V_{r,6}=v^2f_r$, where
\begin{equation}\label{purethirdprofile}
 f_r=\begin{cases}
 \alpha\lambda_{b-3}(6,3)-\lambda_{b-3}(6,6),&r=3,\\[1mm]
 -\beta(\alpha\lambda_{b-r}(3,0)+\beta),&r=4,5,6.
 \end{cases}
\end{equation}
These are the only components of $\mathbf v_6$ needed below.

Solve the last three of these equations, whose matrix is
$\mathsf N=\mathsf M_{6,(4,5,6)}$.  On $\Delta_3=0$, \eqref{detfactor} gives
\[
 \det\mathsf N=\Gamma_6(m+3)\Pi_6(m+3),\qquad
 \Pi_6(m+3)=-6(z+5)\ne0.
\]
The last inequality follows from
\[
 (z+5)(z-6)=-6(b+2),\qquad
 (z+6)(z-7)=-6(b+4):
\]
$b+2,b+4<p$ and $p\ge17$, so in particular $z+5,z+6\ne0$.
For $\mathbf y=(y_3,y_4,y_5,y_6)^{\mathsf T}$, define the linear
functional
\[
 \mathcal L_6(\mathbf y):=(\mathsf M_6)_{3,*}\mathsf N^{-1}
      (y_4,y_5,y_6)^{\mathsf T}-y_3.
\]
It is the residual in row three after rows four through six have been
solved.  Set $\mathbf f=(f_3,f_4,f_5,f_6)^{\mathsf T}$, with entries
from \eqref{purethirdprofile}.  An actual solution requires
$v^2\mathcal L_6(\mathbf f)=0$.

Put $D_r=\lambda_{b-r}(3,0)$ for $r=3,4,5,6$, and define
\[
 \mathbf D=(D_3,D_4,D_5,D_6)^{\mathsf T},\qquad
 \mathbf e=(1,0,0,0)^{\mathsf T},\qquad
 \mathbf 1_4=(1,1,1,1)^{\mathsf T}.
\]
The vector $\mathbf f$ decomposes as
\[
 \mathbf f=-\alpha\beta\mathbf D-\beta^2\mathbf 1_4+\chi\mathbf e,\qquad
 \chi=f_3+\beta(\alpha\lambda_{b-3}(3,0)+\beta).
\]
The all-ones vector is the negative of the constant column of the
coefficient matrix, so
$\mathcal L_6(\mathbf 1_4)=0$.  Also $\mathcal L_6(\mathbf e)=-1$, since its last
three entries are zero.  Solving the indicated $3\times3$ subsystem with the entries $D_4,D_5,D_6$, and then substituting into row three, gives
\[
 \mathcal L_6(\mathbf D)=\frac{2(z-6)(z^2+11z+48)}
                       {(b+3)n(n-1)(n-2)}.
\]
The four coefficients are given by \eqref{lambda}, and the
simplification uses $6(b-3)=z-z^2$.  Thus only these two scalar
residuals, not a general sixth-level state, are needed.

For completeness the last cancellation can be checked term by term.
Set
\begin{equation}\label{vK6}
 \widetilde\kappa_6=
 \frac{27(z+6)^2m(m^2-1)}
 {(z-6)(z-4)^2(z-3)b^2(b+1)^2(b+3)}.
\end{equation}
Using \eqref{thirdprofileparameters} and $\Delta_3=0$ in the two
nonconstant contributions gives
\[
 \frac{-\alpha\beta\mathcal L_6(\mathbf D)}{\widetilde\kappa_6}
       =\frac{z^2+11z+48}{12(z+6)},\qquad
 \frac{-\chi}{\widetilde\kappa_6}
       =-\frac{z^2-z-24}{12(z+6)}.
\]
The difference of the two numerators is $12(z+6)$, so
$\mathcal L_6(\mathbf f)=\widetilde\kappa_6$.  All its denominator factors are
units: $z\ge12$, $z+3\le p-1$, and $b+3,n,m+1<p$.
Its numerator is also nonzero, including $z+6$ as checked above.
Consequently $v^2\mathcal L_6(\mathbf f)\ne0$, the required contradiction.
\end{proof}

\subsection{The cubic endpoint calculation}\label{app:cubic}

\begin{proof}[Cubic endpoint calculation]
This is the $b=3$ calculation used in
Lemma~\ref{branchscreening}(i).  Here $n=m+1$ and $z=m+3$, with
$3\le m\le p-7$.  All endpoint rows below are obtained from
\eqref{actual_degree_remainder}--\eqref{actual_degree_residual}, not
from Proposition~\ref{largegaprecursion} at $j=4,5$.
If $t=0$, the fourth-level boundary condition has
$\Psi_4=(z-1)(z+2)\ne0$, so $\sigma=0$.
Also $\Delta_3=z(z-1)\ne0$, so $u_3=0$.  Hence $F_B=X^3$, a
contradiction.  Thus $t\ne0$.

If $\xi=0$, the first three coefficients already give
$F_B=F_{B^0}$.  Otherwise normalize $t=1$ and use
\eqref{mregxicandidate}, which becomes
\begin{equation}\label{cubicxicandidate}
 \xi=-\frac{64}{z(z-1)^3}.
\end{equation}
We compute further using only the degree-three reduction, so
$u_4=u_5=0$.  At level four, the raw equations with $X$-degrees $i=0,1$ have
coefficient matrix
\[
 \begin{pmatrix}1&-1\\-(n-2)/2&(n+2)/2\end{pmatrix},
 \qquad \det=2.
\]
With $U_i=\mathcal W_{i,4}$ for $i=0,1$, solving this
$2\times2$ subsystem gives
\[
 c_4=\frac{U_1}{2}+\frac{n+2}{4}U_0,\qquad
 h_4=\frac{U_1}{2}+\frac{n-2}{4}U_0.
\]
Here the rows retain their raw normalization: their $h_4$
coefficients are $-g_{0,0}=-1$ and $-g_{1,0}=(n+2)/2$.
The values $U_i$ are therefore raw right-hand sides, as defined in
\eqref{actual_degree_rhs}, rather than entries of a vector $\mathbf v_4$.
This supplies the preceding state needed at level five.

At level five the three raw equations, in order $i=0,1,2$, have the
augmented matrix
\begin{equation}\label{cubicaugmented}
 \begin{pmatrix}
 -1&-1&W_0\\[1mm]
 (n-3)/2&(n+2)/2&W_1\\[1mm]
 -(n-5)(n-2)/8&-n(n+3)/8&W_2
 \end{pmatrix}.
\end{equation}
These are the rows $i=0,1,2$ of the actual-degree equations
\eqref{actual_degree_residual}, with $W_i=\mathcal W_{i,5}$.
Their second column is $-g_{i,0}$, so no normalization by $g_{i,0}$
has been suppressed.  If $c_5$ or $h_5$ is beyond its actual degree,
its value is fixed to zero.  Allowing it formally in
\eqref{cubicaugmented} only enlarges the candidate system; every actual
solution is still included.  In particular, at $n=4$ the displayed
polynomial continuation of the $c_5$ column is harmless because an
actual solution has $c_5=0$.

The first two rows have determinant $-5/2\ne0$.  Solving them gives
\[
 c_5=-\frac{2W_1+(n+2)W_0}{5},\qquad
 h_5=\frac{2W_1+(n-3)W_0}{5}.
\]
Substitution in the third row leaves exactly the condition
\begin{equation}\label{cubic_remaining_row}
 W_2+\frac{n-1}{2}W_1+\frac{n^2-n-4}{8}W_0=0.
\end{equation}
We record a coefficient table for this finite substitution, so the
last cancellation can be checked directly.  Put
$D=5040(m+1)^5(m+2)^4(m+4)^5$ and $T=m(m-1)(m-2)$.
Inserting \eqref{cubicxicandidate}, the initial states, and the two
fourth-level values into \eqref{actual_degree_remainder} and
\eqref{actual_degree_rhs}, with $b=3$ and $j=5$, gives
\[
\begin{aligned}
 W_0&=-\frac{28T}{D}P_0(m),\qquad
 W_1=\frac{7T(m-3)}{D}P_1(m),\\
 W_2&=\frac{T(m-4)(m-3)(m+1)}{D}P_2(m),
\end{aligned}
\]
where the full polynomials are specified by their coefficients:
\begin{equation*}
 \begin{array}{c|r|r|r}
 k &[m^k]P_0 &[m^k]P_1 &[m^k]P_2\\ \hline
10&15&15&0\\
9&15&25&0\\
8&-2035&-2605&175\\
7&-17875&-28825&3150\\
6&-54100&-133850&21665\\
5&41820&-253720&54530\\
4&764880&246560&-129080\\
3&2243392&2221184&-1287840\\
2&2979200&4430464&-3703552\\
1&1817856&3897344&-4894592\\
0&396288&1296384&-2535936
\end{array}
\end{equation*}
Combining the three polynomials specified by the table gives the identity
\[
\begin{split}
 &2(m-4)(m-3)(m+1)P_2
       +7m(m-3)P_1-7(m^2+m-4)P_0\\
 &\hspace{20mm}=-69120\prod_{k=1}^{6}(m+k).
\end{split}
\]
Since $n=m+1$, the left-hand side of \eqref{cubic_remaining_row} is
$T/(2D)$ times this polynomial.  After cancellation, it is
\begin{equation}\label{cubicfifth}
 -\frac{48m(m-2)(m-1)(m+3)(m+5)(m+6)}
 {7(m+1)^4(m+2)^3(m+4)^4}.
\end{equation}
The bounds $3\le m\le p-7$ and $p\ge11$ show that all its numerator
and denominator factors are units.  It cannot be zero.  Thus the
nonzero candidate has no fifth-level continuation, and only the
reference cubic remains.
\end{proof}

\subsection{The quartic endpoint calculation}\label{app:quartic}

\begin{proof}[Quartic endpoint calculation]
We verify the $b=4$ endpoint in Lemma~\ref{branchscreening}(iii).
The fourth-level condition has $\Psi_4=z(z+1)\ne0$, so $\sigma=0$.
If $u_3=0$, the initial states and then the fourth state are zero;
this gives $F_B=X^4$, impossible.  Hence $v:=u_3\ne0$ and
\begin{equation}\label{quarticDelta3}
 \Delta_3=m^2+9m+26=0.
\end{equation}
The first two third-level equations give
\begin{equation}\label{quarticc3h3}
 c_3=\frac{m+13}{40}v,\qquad h_3=-\frac{3m}{40}v.
\end{equation}
Since $\mathbf v_4=0$, Corollary~\ref{uniquecontinuation} gives
$\mathbf x_4=0$.

Here the actual root degree is four, so $u_5=u_6=0$ throughout the
next calculation.  We use \eqref{actual_degree_residual} at levels
five and six.  Its fifth-level historical right-hand sides
$\mathcal W_{i,5}$ vanish: all surviving positive-index data have index
three, whereas these right-hand sides have weight five.  The two raw
columns on $c_5,h_5$, in $X$-degrees $0,1$, have matrix
\[
 \begin{pmatrix}-1&-1\\(n-3)/2&(n+2)/2\end{pmatrix},
 \qquad n=m+2,
\]
with determinant $-5/2$.  Thus $c_5=h_5=0$.  If a coefficient is
past its degree, it is already zero; the same two-column elimination
is a valid necessary calculation when the vanishing coefficient is formally retained as an unknown.  No use of a generic rank-three $\mathsf M_5$ is made.

At level six only the quadratic contribution of the third-level state
remains.  Evaluate \eqref{actual_degree_remainder} and
\eqref{actual_degree_rhs} with $b=4$, $u_5=u_6=0$ and $c_5=h_5=0$,
and reduce using $m^2=-9m-26$.  Writing
$W_i=\mathcal W_{i,6}/v^2$, the actual-degree system is
$A_i c_6+B_i h_6=W_i v^2$, where $B_i=-g_{i,0}$ and
\begin{equation}\label{quartic_sixth_table}
\begin{array}{c|c|c|c}
 i&A_i&B_i&W_i\\ \hline
 0&1&-1&3(31m+52)/1600\\
 1&-(m-2)/2&(m+4)/2&93(5m+26)/4480\\
 2&-(7m+11)/4&(m+8)/4&351(2m+25)/22400\\
 3&-13(m+3)/4&-(m-2)/8&-13(84m-97)/22400
\end{array}
\end{equation}
All entries are raw coefficient comparisons, not normalized by a
possibly vanishing subminor.  Formal coefficients beyond degree are
again fixed to zero for an actual solution; allowing them while
eliminating only enlarges the system.

The first two rows have determinant $3$ and give
\begin{equation}\label{quartic_sixth_solution}
 c_6=\frac{3(9m-26)}{11200}v^2,\qquad
 h_6=-\frac{39(8m+15)}{5600}v^2.
\end{equation}
Substitution in rows $i=2,3$ gives the residuals
\begin{equation}\label{quarticsix}
 A_2c_6+B_2h_6-W_2v^2=-\frac{351}{3200}v^2,
 \qquad
 A_3c_6+B_3h_6-W_3v^2=\frac{13(27m+4)}{6400}v^2.
\end{equation}
Both must be zero.  All denominators have prime factors at most seven,
whereas $p\ge13$.  Since $v\ne0$ and $351=27\cdot13$, the first
condition forces $p=13$.  Now $3\le m\le p-2b-1=4$;
\eqref{quarticDelta3} excludes $m=3$ and leaves $m=4$.
Finally \eqref{quarticc3h3} and \eqref{quartic_sixth_solution}, with
these parameters and the factors $(-2)^s$ restored in $F_{A^c}$,
give exactly \eqref{quartictriple}.  In particular the displayed
$h_6$ is zero, as the actual degree $m=4$ requires.
\end{proof}

\end{document}